\newsavebox{\@brx}
\newcommand{\llangle}[1][]{\savebox{\@brx}{\(\m@th{#1\langle}\)}%
	\mathopen{\copy\@brx\kern-0.5\wd\@brx\usebox{\@brx}}}
\newcommand{\rrangle}[1][]{\savebox{\@brx}{\(\m@th{#1\rangle}\)}%
	\mathclose{\copy\@brx\kern-0.5\wd\@brx\usebox{\@brx}}}
\newsavebox{\measure@tikzpicture}
	\edef\tikzscale{\pgfmathresult}%
\DeclarePairedDelimiter\norm{\lvert}{\rvert}
\DeclarePairedDelimiter\inner{\langle}{\rangle}
\def\irr#1{{\rm  Irr}(#1)}
\numberwithin{equation}{section}
\newcounter{intro}
\newtheorem{introthm}[intro]{Theorem}
\newtheorem{thm}[equation]{Theorem}
\newtheorem{lem}[equation]{Lemma}
\newtheorem{cor}[equation]{Corollary}
\theoremstyle{remark}
\theoremstyle{definition}
\title[Nested Groups]{Groups where the centers of the irreducible characters form a chain II}
\author{Shawn T. Burkett}
\address{Department of Mathematical Sciences, Kent State University, Kent,
	Ohio 44240, U.S.A.} \email{sburket1@kent.edu}
\author{Mark L. Lewis}
\address{Department of Mathematical Sciences, Kent State University, Kent,
	Ohio 44240, U.S.A.} \email{lewis@math.kent.edu}
\date{\today}
\keywords{nested groups, nested GVZ-groups, $p$-groups, characters}
\subjclass[2010]{20C15}
\begin{document}

\begin{abstract}
In this paper, we determine new characterizations of nested and nested GVZ-groups, including character-free characterizations, but we additionally show that nested groups and nested GVZ-groups can be defined in terms of the existence of certain normal series.  
\end{abstract}

\maketitle

\section{Introduction}

All groups in this paper will be finite and when $G$ is a group, we will write $\irr G$ for the set of irreducible characters of $G$. In Problem \#24 of Research problems and themes I of \cite{YBGPPOV1}, Berkovich asks for a description of the $p$-groups $G$ for which the centers (quasi-kernels) of the irreducible characters form a chain with respect to inclusion.  In \cite{ML19gvz}, the second author did just this.  In this paper, we arrive at a different characterization of these groups using chains of subgroups that are defined for all groups.

Let $G$ be a group.  We will define a characteristic subgroup $K (G)$ in terms of the centers of certain irreducible characters of $G$.  Since the definition of this subgroup is technical, we postpone its statement until Section \ref{K section}.  We want to state some properties of this group.  

\begin{introthm}\label{intro Kprops}
Let $G$ be a nonabelian group, and let $N\lhd G$. Then either $K(G) \le N$ or $N \le Z(G)$ and $Z(G/N) = {Z}(G)/N$.
\end{introthm}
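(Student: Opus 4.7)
The conclusion is a dichotomy, so I fix $N \lhd G$, assume $K(G) \not\le N$, and aim to derive both $N \le Z(G)$ and $Z(G/N) = Z(G)/N$. Since $K(G)$ is built from centers $Z(\chi)$ of certain irreducible characters (postponed to Section~\ref{K section}), both conclusions should flow from character-theoretic inputs, principally the standard identity
\[
Z(G) = \bigcap_{\chi \in \irr G} Z(\chi),
\]
combined with the condition in Section~\ref{K section} selecting which $Z(\chi)$ actually enter $K(G)$.

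For the first conclusion $N \le Z(G)$, I argue by contrapositive. If $N \not\le Z(G)$, the displayed identity yields some $\chi \in \irr G$ with $N \not\le Z(\chi)$, and in particular $Z(\chi) < G$. The plan is to show that any such $\chi$ qualifies for the intersection defining $K(G)$, so that $K(G) \le Z(\chi)$, and to iterate over all such $\chi$ to conclude $K(G) \le N$. The precise implementation depends on the definition in Section~\ref{K section}, but the intuition is that the side condition singling out characters contributing to $K(G)$ should be satisfied exactly by those whose center fails to contain some non-central normal subgroup.

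For the second conclusion, the identity $Z(G/N) = Z(G)/N$ is equivalent to the implication $[g, G] \le N \Rightarrow g \in Z(G)$ for every $g \in G$. Assume for contradiction that $g \notin Z(G)$ with $[g, G] \le N$, and set $M = [g, G] \lhd G$. If $M \not\le Z(G)$, the first conclusion applied to $M$ yields $K(G) \le M \le N$, contradicting $K(G) \not\le N$. Therefore $M \le Z(G)$ and $g \in Z_2(G) \setminus Z(G)$. To finish, I would exhibit an irreducible $\chi$ with $g \notin Z(\chi)$ — for example a constituent of a character induced from a maximal subgroup of $G$ that contains $Z(G)$ but excludes $g$ — and verify that it satisfies the side condition qualifying it for the intersection in $K(G)$. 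Arguing as in the first conclusion then forces $K(G) \le N$, again a contradiction.

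\textbf{Main obstacle.} The hard case is the second conclusion in the sub-case $M \le Z(G)$: the first conclusion offers no leverage, and one must directly produce an irreducible character that both witnesses the non-centrality of $g$ and meets the side condition attached to $K(G)$'s definition. Verifying that side condition demands an explicit handle on how $K(G)$ interacts with elements of $Z_2(G) \setminus Z(G)$, which is where the technical machinery in Section~\ref{K section} will have to be brought to bear.
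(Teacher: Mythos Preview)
Your plan misfires on both the definition of $K(G)$ and the choice of characters to work with. In Section~\ref{K section} the paper sets $\mathcal{X}=\{\chi\in\irr G:\ Z(\chi)>Z(G)\}$ and $K(G)=\bigcap_{\chi\in\mathcal{X}}\ker(\chi)$: an intersection of \emph{kernels}, selected by the condition $Z(\chi)>Z(G)$. Your heuristic ``the side condition should be satisfied exactly by those $\chi$ whose center fails to contain some non-central normal subgroup'' points in the wrong direction: for instance $1_G\in\mathcal{X}$ (since $Z(1_G)=G>Z(G)$), yet every normal subgroup lies in $Z(1_G)$. More seriously, the implication you need for part one, namely ``$N\not\le Z(\chi)\Rightarrow\chi\in\mathcal{X}$'', is false whenever $Z(\chi)=Z(G)$ and $N\not\le Z(G)$; and even if you had $K(G)\le\ker(\chi)$ for every $\chi$ with $N\not\le Z(\chi)$, intersecting those kernels does not recover $N$.

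The paper's argument avoids all of this by looking instead at the characters with $N\le\ker(\chi)$, using the identity $N=\bigcap_{\chi\in\irr{G/N}}\ker(\chi)$. If $\irr{G/N}\subseteq\mathcal{X}$ then immediately $K(G)\le N$. So $K(G)\not\le N$ forces some $\chi\in\irr{G/N}$ with $Z(\chi)=Z(G)$, whence $N\le\ker(\chi)\le Z(\chi)=Z(G)$; that is part one. For part two, if $Z(G/N)>Z(G)/N$ then every $\chi\in\irr{G/N}$ satisfies $Z(\chi)\ge Z_N>Z(G)$, so $\irr{G/N}\subseteq\mathcal{X}$ and $K(G)\le N$, a contradiction. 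There is no ``hard case'' with $g\in Z_2\setminus Z(G)$: once you compare $\irr{G/N}$ to $\mathcal{X}$ rather than chasing individual characters with $N\not\le Z(\chi)$, both conclusions drop out in one line each.
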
 

If $G$ is a group, then we define $Z_2$ by $Z_2/Z(G) = Z(G/Z(G))$.  It turns out that the behavior of $K (G)$ depends on whether or not $Z_2 = Z(G)$ or $Z_2 > Z(G)$.  We next state the situation when $Z_2 = Z(G)$.

\begin{introthm} \label{intro Z2=Z}
Let $G$ be a nonabelian group.  Assume $Z_2 = Z(G)$.  Then the following are true:
\begin{enumerate}[label={\bf(\arabic*)}]
\item $K(G)$ is the intersection of all the noncentral normal subgroups of $G$.
\item $K(G) \not\le Z(G)$ if and only if $G$ has a unique subgroup $N$ that is minimal among noncentral normal subgroups of $G$.  	
\end{enumerate}   
\end{introthm}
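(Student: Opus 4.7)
The plan is to first simplify Theorem A using the hypothesis $Z_2 = Z(G)$, which is equivalent to $Z(G/Z(G)) = 1$. Under this hypothesis, whenever $N \lhd G$ with $N \le Z(G)$, the identity $Z(G/N) = Z(G)/N$ is automatic: if $xN \in Z(G/N)$, then $[x,G] \le N \le Z(G)$, so $xZ(G) \in Z(G/Z(G)) = 1$, forcing $x \in Z(G)$. Consequently Theorem A specializes to a clean dichotomy: for every $N \lhd G$, either $K(G) \le N$ or $N \le Z(G)$. Writing $M$ for the intersection of all noncentral normal subgroups of $G$, applying this dichotomy to each noncentral normal subgroup immediately yields $K(G) \le M$.

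For the reverse inclusion $M \le K(G)$ in part (1), I would split on whether $K(G)$ is central. If $K(G) \not\le Z(G)$, then $K(G)$ itself belongs to the family being intersected, so $M \le K(G)$ and equality follows. The case $K(G) \le Z(G)$ is the main obstacle, since Theorem A by itself does not rule out the possibility that $M$ properly contains $K(G)$. To handle it I would pass to the quotient $\overline G = G/K(G)$: by the dichotomy every noncentral normal of $G$ contains $K(G)$, so the noncentral normals of $\overline G$ are exactly the images $N/K(G)$ of the noncentral normals of $G$, and their intersection is $M/K(G)$. Moreover $\overline G$ still satisfies $Z_2(\overline G) = Z(\overline G)$, so the same simplified dichotomy applies there. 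The plan is to combine induction on $|G|$ with whatever property of the technical definition of $K$ from Section \ref{K section} forces $K(\overline G) = 1$ in this setting; this would give $M/K(G) = 1$ and hence $M = K(G)$.

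For part (2), both directions follow formally from part (1). If $K(G) \not\le Z(G)$, then $K(G)$ is itself a noncentral normal subgroup, and the dichotomy ensures $K(G) \le N$ for every noncentral normal subgroup $N$; thus $K(G)$ is the unique subgroup of $G$ that is minimal among noncentral normal subgroups. Conversely, if $G$ admits a unique minimal noncentral normal subgroup $N_0$, then $M = N_0$ is noncentral, and part (1) gives $K(G) = N_0 \not\le Z(G)$. The substantive work therefore lies entirely in closing the inclusion $M \le K(G)$ in part (1) when $K(G) \le Z(G)$: Theorem A places $M$ and $K(G)$ in the same segment of the normal subgroup lattice without forcing their coincidence, so the explicit character-theoretic construction of $K(G)$ must enter to certify that nothing central and strictly above $K(G)$ can survive inside every noncentral normal subgroup.
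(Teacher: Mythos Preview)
Your simplification of Theorem~\ref{intro Kprops} under the hypothesis $Z_2=Z(G)$ is correct, and it does give $K(G)\le M$ as well as the clean deduction of part~(2) from part~(1). The difficulty is entirely in the reverse inclusion $M\le K(G)$ when $K(G)\le Z(G)$, and your proposed inductive scheme has a genuine gap there.

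The induction passes to $\overline{G}=G/K(G)$ and appeals to the inductive hypothesis on a strictly smaller group. This requires $K(G)>1$. When $K(G)=1$ (which does occur: take $G=S_3\times S_3$, where $Z_2=Z(G)=1$ and $K(G)=1$), you have $\overline{G}=G$ and the induction says nothing. The auxiliary fact you are reaching for---that $Z_{K(G)}=Z(G)$ forces $K(G/K(G))=1$---is indeed proved in the paper (Theorem~\ref{Kprops}(4)), but applying it with $K(G)=1$ just returns the tautology $K(G)=1$ and does not touch $M$. So the base case of your induction is precisely the statement you are trying to prove, in the special case $K(G)=1$.

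The paper avoids induction altogether by using the commutator description $K(G)=\bigcap_{g\in G\setminus Z(G)}[g,G]$ from Lemma~\ref{K intersection}. The hypothesis $Z_2=Z(G)$ means exactly that $[g,G]\not\le Z(G)$ for every $g\notin Z(G)$; hence each $[g,G]$ is a noncentral normal subgroup, and $M\le[g,G]$ for all such $g$ gives $M\le K(G)$ in one line. Notice that if you try to patch your base case by showing directly that $M=1$ when $K(G)=1$, you are led to this same observation: any $m\in M$ lies in every noncentral normal subgroup, in particular in every $[g,G]$, hence in their intersection $K(G)=1$. At that point the induction is superfluous, since the same argument handles all cases uniformly.
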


When $Z_2 > Z(G)$, we have the following properties.

\begin{introthm}\label{intro Z_2>Z}
Let $G$ be a group and suppose $Z_2 > Z(G)$.  Then the following are true:
\begin{enumerate}[label={\bf(\arabic*)}]
\item  $K (G) \le Z (G)$ and $K (G)$ is an elementary abelian $p$-group for some prime $p$. 
\item If $K(G) > 1$, then $Z_2/Z(G)$ is a $p$-group where $p$ is the prime dividing $|K (G)|$.
\item $Z(G/K(G)) > Z(G)/K(G)$ if and only if every irreducible character $\chi \in \irr {G/K(G)}$ satisfies $Z (\chi) > Z(G)$.
\end{enumerate}
\end{introthm}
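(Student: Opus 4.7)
The plan is to prove the three parts in sequence, relying throughout on Theorem \ref{intro Kprops}: to force $K(G) \le N$ for a normal subgroup $N$, it suffices to verify that the second alternative of that theorem fails, i.e., either $N \not\le Z(G)$ or some element outside $Z(G)$ becomes central modulo $N$.

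For (1), I first apply Theorem \ref{intro Kprops} with $N = Z(G)$: the second alternative would require $Z(G/Z(G)) = 1$, which contradicts $Z_2 > Z(G)$, so $K(G) \le Z(G)$. To refine this to an elementary abelian $p$-group, I pick $x \in Z_2 \setminus Z(G)$ whose image in $Z_2/Z(G)$ has prime order $p$ (any prime divisor of $|Z_2/Z(G)|$ will do), so $x^p \in Z(G)$, and set $N = [x, G]$. Because $x \in Z_2$, the map $g \mapsto [x, g]$ is a homomorphism $G \to Z(G)$, so $N$ is a normal subgroup of $G$ contained in $Z(G)$; the identity $[x, g]^p = [x^p, g] = 1$ forces $N$ to have exponent $p$. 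A second application of Theorem \ref{intro Kprops} to this $N$ yields $K(G) \le N$, since $xN$ is central in $G/N$ while $x \notin Z(G)$. Hence $K(G)$ is elementary abelian of exponent $p$. For (2), the same construction run with any prime $q$ dividing $|Z_2/Z(G)|$ places $K(G)$ inside an elementary abelian $q$-group; if $K(G) > 1$, this forces $q = p$, so $Z_2/Z(G)$ is a $p$-group and $p$ divides $|K(G)|$.

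For (3), the forward direction is clean: if $gK(G) \in Z(G/K(G)) \setminus Z(G)/K(G)$ and $\chi \in \irr{G/K(G)}$, then $[g, G] \le K(G) \le \ker \chi$, so $g\ker \chi$ is central in $G/\ker \chi$, giving $g \in Z(\chi) \setminus Z(G)$ and hence $Z(\chi) > Z(G)$. The reverse direction is the main obstacle. I will prove the contrapositive: assuming $Z(G/K(G)) = Z(G)/K(G)$, exhibit a $\chi \in \irr{G/K(G)}$ with $Z(\chi) = Z(G)$. The identity $Z(\chi)/\ker \chi = Z(G/\ker \chi)$ reduces the task to producing an irreducible character whose kernel is compatible with $K(G)$, in the sense that $Z(G/\ker\chi) = Z(G)/\ker\chi$; the hypothesis then forces $Z(\chi) = Z(G)$ directly. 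Extracting the existence of such a character will require the explicit character-theoretic definition of $K(G)$ given in Section \ref{K section}, which is the only place where the character content of $K(G)$ enters the argument.
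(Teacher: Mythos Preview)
Your arguments for (1), (2), and the forward implication in (3) are correct and close to the paper's approach. The paper phrases things in terms of the formula $K(G)=\bigcap_{g\in G\setminus Z(G)}[g,G]$ (Lemma~\ref{K intersection}) rather than via Theorem~\ref{intro Kprops}, but the content is the same: you are feeding a well-chosen $N=[x,G]$ into the dichotomy and reading off $K(G)\le N$. Your deduction of (2) by letting the prime $q$ vary over the prime divisors of $|Z_2/Z(G)|$ is a clean repackaging of the paper's argument.

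The reverse implication in (3), however, is a genuine gap. You reduce the problem to finding some $\chi\in\irr{G/K(G)}$ with $Z(\chi)=Z(G)$ and then say this ``will require the explicit character-theoretic definition of $K(G)$,'' but you do not carry anything out, and in fact the definition alone does not yield this. Unwinding, you need $\irr{G/K(G)}\not\subseteq\mathcal{X}$; since $\mathcal{X}\subseteq\irr{G/K(G)}$ always, the bad case is $\irr{G/K(G)}=\mathcal{X}$, equivalently $K(G)=\bigcap_{g\notin Z(G)}[g,G]$ with \emph{no} $[g,G]$ equal to $K(G)$. Nothing in the definition rules this out. The paper's proof (Lemma~\ref{K equiv cond 2}) is substantially harder than what you sketch: it passes through an auxiliary condition on $K(G/N)$ for \emph{all} normal $N$ not containing $K(G)$, and establishes that condition via a minimal-counterexample argument exploiting the elementary abelian structure of $[g,G]$ for suitable $g\in Z_2$. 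Moreover, the paper explicitly imposes the extra hypothesis $K(G)>1$ for this implication and remarks that it is used ``strongly''; indeed the example \verb+SmallGroup(32,27)+ mentioned after Lemma~\ref{K equiv cond 2} shows that when $K(G)=1$ one can have every $\chi\in\irr{G/K(G)}$ satisfy $Z(\chi)>Z(G)$ while $Z(G/K(G))=Z(G)/K(G)$. So your plan, as written, cannot be completed without both the additional hypothesis and a much more delicate argument than ``use the definition.''
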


Following \cite{ML19gvz}, we say that a group $G$ is {\it nested} if for all characters $\chi, \psi \in \irr G$ either $Z (\chi) \le Z (\psi)$ or $Z (\psi) \le Z(\chi)$.  It is not difficult to see that a group $G$ is nested when the centers of the irreducible characters form a chain.  In \cite{SBML1}, we attach a subgroup to each conjugacy class, and we show that a group is nested if the subgroups attached to the conjugacy classes form a chain.  In this paper, we find another way to determine whether a group is nested.  Using the subgroup $K(G)$, we can determine when $G$ is nested.

\begin{introthm} \label{intro K > 1}
Let $G$ be a group.  Then $G$ is nested if and only if $K (G/N) > 1$ for every proper normal subgroup $N$. 
\end{introthm}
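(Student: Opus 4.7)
\medskip
\noindent\emph{Proof plan.} The plan is to argue both implications by induction on $|G|$, exploiting Theorems~A, B, and C as the main structural tools.

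For the forward direction, I would first observe that being nested is inherited by quotients: if $N \lhd G$ and $\bar\chi, \bar\psi \in \irr{G/N}$ inflate to $\chi, \psi \in \irr G$, then $Z(\bar\chi) = Z(\chi)/N$ and $Z(\bar\psi) = Z(\psi)/N$, so a chain among the centers in $\irr G$ restricts to a chain among the centers in $\irr{G/N}$. This reduces the statement to showing $K(G) > 1$ for every nested group $G$, since if $G$ is nested then so is every $G/N$. The abelian case I expect to follow directly from the definition of $K$ given in Section~\ref{K section}. For nonabelian $G$, I would split on whether $Z_2 = Z(G)$ or $Z_2 > Z(G)$. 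When $Z_2 = Z(G)$, Theorem~B identifies $K(G)$ with the intersection of all noncentral normal subgroups of $G$; I would show that a nested group with $Z_2 = Z(G)$ has a \emph{unique} minimal noncentral normal subgroup (and hence even $K(G) \not\le Z(G)$). Indeed, given two distinct minimal noncentral normals $M_1, M_2$, choose $\chi_i \in \irr G$ with $M_i \not\le Z(\chi_i)$; the nested hypothesis makes $Z(\chi_1), Z(\chi_2)$ comparable, and this should contradict minimality after tracing through the centers. When $Z_2 > Z(G)$, Theorem~C(1) already restricts $K(G)$ to be central; here I would use part~(3) contrapositively, noting that in a nested group the chain of centers has a minimum which must coincide with $Z(G)$, and use this to force $K(G) > 1$.

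For the reverse direction, assume $K(G/N) > 1$ for every proper $N \lhd G$ and, for contradiction, that $G$ is not nested. Pick $\chi, \psi \in \irr G$ with $Z(\chi), Z(\psi)$ incomparable. By the inductive hypothesis applied to every proper quotient $G/N$ (which inherits the hypothesis on $K$, since $K((G/N)/(M/N)) = K(G/M) > 1$), each such quotient is already nested. The goal is then to push the pair $(\chi, \psi)$ down to some proper quotient while preserving incomparability, producing a contradiction. The natural reduction is modulo $K(G)$ itself: Theorem~A says $K(G)$ sits inside every noncentral normal subgroup and inside every normal subgroup $N \le Z(G)$ with $Z(G/N) > Z(G)/N$, and in particular $K(G)$ should be forced into $\ker\chi \cap \ker\psi$ (or at least into $Z(\chi) \cap Z(\psi)$). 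Assuming $K(G) \le \ker\chi \cap \ker\psi$, the induced characters on $G/K(G)$ still have incomparable centers, contradicting that $G/K(G)$ is nested.

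The main obstacle is this last reduction step: I need to verify that $K(G)$ lies in the kernels (or at least the centers) of both $\chi$ and $\psi$ whenever these witness non-nestedness. The first case of Theorem~A places $K(G)$ inside every noncentral normal, so if either $\ker\chi$ or $\ker\psi$ is noncentral the argument proceeds; the delicate subcase is when both kernels are contained in $Z(G)$, where I would have to exploit Theorems~B and~C to show that the relevant $Z_2$-information still forces $K(G)$ into the kernels. A secondary but more routine obstacle is the forward case $Z_2 > Z(G)$, where making $K(G) > 1$ visible requires careful bookkeeping with the characters realizing the minimum of the chain of centers.
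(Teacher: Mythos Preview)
Your reverse direction has the right skeleton, but what you flag as the ``main obstacle'' evaporates once you recall the \emph{definition} of $K(G)$: for nonabelian $G$ one has $K(G) = \bigcap_{\psi \in \mathcal{X}} \ker(\psi)$ where $\mathcal{X} = \{\psi \in \irr G \mid Z(\psi) > Z(G)\}$. If $Z(\chi)$ and $Z(\psi)$ are incomparable then neither can equal $Z(G)$ (which is contained in every center), so both $\chi, \psi \in \mathcal{X}$ and hence $K(G) \le \ker\chi \cap \ker\psi$ \emph{immediately}. There is no need to invoke Theorems~A, B, or~C, and no delicate subcase when both kernels are central. This is exactly how the paper argues: every $\chi$ with $Z(\chi) > Z(G)$ factors through $G/K(G)$, which is nested by induction, so $\{Z(\chi) \mid Z(\chi) > Z(G)\}$ is already a chain; adjoining $Z(G)$ at the bottom keeps it a chain.

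Your forward direction, by contrast, has real gaps. The case split on $Z_2$ is unnecessary, and neither sketched argument is convincing as written. In the $Z_2 = Z(G)$ case, choosing $\chi_i$ with $M_i \not\le Z(\chi_i)$ and then comparing $Z(\chi_1)$ with $Z(\chi_2)$ yields no obvious relation between $M_1$ and $M_2$, so the promised contradiction with minimality does not materialize. In the $Z_2 > Z(G)$ case, Theorem~C(3) is a biconditional about $Z(G/K(G))$; it gives no information when $K(G) = 1$ and cannot by itself force $K(G) > 1$. The clean route, which is what the paper uses (Lemma~\ref{nested K}), bypasses the case split entirely: if $G$ is nested and nonabelian with chain of centers $G = X_0 > \dotsb > X_n = Z(G)$, then every $\psi \in \mathcal{X}$ satisfies $Z(\psi) \ge X_{n-1}$, whence $[X_{n-1},G] \le \ker\psi$ by Lemma~\ref{center kernel}. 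Thus $K(G) \ge [X_{n-1},G] > 1$, the last inequality because $X_{n-1} \not\le Z(G)$.
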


We will define a chain of subgroups by defining $K_0 = 1$ and for $i \ge 1$, we set $K_i$ by $K_i/K_{i-1} = K (G/K_{i-1})$.  Since $G$ is finite, this chain will terminate, and we write $K_\infty$ for the terminal term of this chain.
In particular, we can use this chain to determine if $G$ is nested.

\begin{introthm} \label{intro K_infty}
Let $G$ be a group.  Then $G$ is nested if and only if $K_\infty = G$.
\end{introthm}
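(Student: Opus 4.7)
The plan uses Theorem \ref{intro K > 1} (nestedness $\iff$ $K(G/N) > 1$ for every proper $N \lhd G$) as the main reduction, together with Theorem \ref{intro Kprops} to control the case when $K(G) \not\le N$.

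\textbf{Forward direction.} Suppose $G$ is nested. The chain $K_0 \le K_1 \le \cdots$ terminates at $K_\infty$, which by the recursive definition means $K(G/K_\infty) = K_{\infty+1}/K_\infty = 1$. If $K_\infty$ were a proper normal subgroup of $G$, Theorem \ref{intro K > 1} would force $K(G/K_\infty) > 1$, a contradiction; hence $K_\infty = G$.

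\textbf{Backward direction.} Assume $K_\infty = G$ and induct on $|G|$; the case $|G| = 1$ is trivial. We must have $K_1 > 1$, since otherwise the chain is constant at $1$ and cannot reach $G$. The recursion yields $K_i(G/K_1) = K_{i+1}(G)/K_1$, hence $K_\infty(G/K_1) = G/K_1$; as $|G/K_1| < |G|$, the inductive hypothesis gives that $G/K_1$ is nested. By Theorem \ref{intro K > 1}, it suffices to check $K(G/N) > 1$ for each proper $N \lhd G$, which I split into two cases. If $K_1 \le N$, then $N/K_1$ is a proper normal subgroup of the nested group $G/K_1$, and Theorem \ref{intro K > 1} applied to $G/K_1$ gives $K(G/N) > 1$. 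If $K_1 \not\le N$, then Theorem \ref{intro Kprops} forces $N \le Z(G)$; the subcase $N = 1$ is immediate because $K(G) = K_1 > 1$, and for $1 < N \le Z(G)$ with $K_1 \not\le N$ I plan to invoke a functoriality of the form $K(G)N/N \le K(G/N)$ valid for central $N$, which combined with $K_1 \not\le N$ gives $K_1N/N \ne 1$ and hence $K(G/N) > 1$.

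\textbf{Main obstacle.} The key ingredient not supplied by the introduction is the functoriality $K(G)N/N \le K(G/N)$ for central $N$. I expect this to come from the explicit construction of $K(G)$ in Section \ref{K section}, via the correspondence between $\irr{G/N}$ and characters of $G$ with $N$ in their kernel together with the identity $Z(\chi)/N = Z(\bar\chi)$ for such a character $\chi$ (which is exactly the situation after Theorem \ref{intro Kprops} forces $N \le Z(G)$). Once that lemma is secured, the rest of the proof is a clean induction resting only on Theorems \ref{intro Kprops} and \ref{intro K > 1}.
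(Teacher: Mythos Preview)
Your argument leans on Theorem~\ref{intro K > 1} as an established result, but in the paper it is a \emph{corollary} of Theorem~\ref{intro K_infty} (see Corollary~\ref{nested cor}, whose proof invokes Theorem~\ref{kappa series} in both directions). So the plan is circular relative to the paper's development: before you could run your argument you would need an independent proof of Theorem~\ref{intro K > 1}, and you have not supplied one.

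The paper avoids this by proving Theorem~\ref{intro K_infty} directly, and its backward direction is in fact shorter than your case analysis. Once induction on $|G|$ gives that $G/K(G)$ is nested, one uses only the definition of $K(G)$: every $\chi \in \irr G$ with $Z(\chi) > Z(G)$ lies in $\mathcal{X}$, hence has $K(G) \le \ker(\chi)$, so all such $\chi$ are characters of the nested group $G/K(G)$ and their centers already form a chain; adjoining $Z(G)$ at the bottom preserves the chain, and $G$ is nested. No split on whether $K_1 \le N$, no appeal to Theorem~\ref{intro K > 1}, and no need for the functoriality lemma you flagged (which, for the record, is available as Theorem~\ref{Kprops}(3), the full version of Theorem~\ref{intro Kprops}). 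For the forward direction the paper proves the sharper identification $K_i = [X_{n-i},G]$ by induction on $i$ via Lemma~\ref{nested K}, which yields $K_\infty = G$ once the series reaches $G'$; your contradiction argument is slick but rests on the same circular dependence.
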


When $G$ is nested, we will see that there is a correspondence between the groups that occur as the centers of irreducible characters and the $K_i$'s.  In \cite{ML19gvz}, the second author proved a number of results regarding the structure of the factors coming from the centers of characters.  We show that these results hold for the $K_i$'s even when $G$ is not nested.  We present one such result next.  When $N$ is a normal subgroup of $G$, we define $Z_N$ by $Z_N/N = Z(G/N)$. 

\begin{introthm} \label{intro z_k}
Let $G$ be a group.  Suppose that there exists integers $1 \le j < k$ so that $Z_{K_i} > Z_{K_{i-1}}$ and $[K_{i},G] \le K_{i-1}$ for all integers $i$ with $j \le i \le k$.   Then there is a prime $p$ so that $K_k/K_{j-1}$ and $Z_{K_k}/Z_{K_{j-1}}$ are $p$-groups.  In particular, $K_i/K_{i-1}$ and $Z_{K_i}/Z_{K_{i-1}}$ are elementary abelian $p$-groups for every integer $i$ with $j \le i \le k$.
\end{introthm}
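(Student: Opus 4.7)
My plan is to iterate Theorem \ref{intro Z_2>Z} along the chain $K_{j-1}<K_j<\cdots<K_k$, first extracting a prime $p_i$ at each level and then showing all the $p_i$ coincide. To invoke Theorem \ref{intro Z_2>Z} at level $i$ I would first verify $Z_2(G/K_{i-1})>Z(G/K_{i-1})$. The hypothesis $[K_i,G]\le K_{i-1}$ says $K_i/K_{i-1}\le Z(G/K_{i-1})$, so any $x\in Z_{K_i}\setminus Z_{K_{i-1}}$ satisfies $[xK_{i-1},G/K_{i-1}]\le K_i/K_{i-1}\le Z(G/K_{i-1})$ while $xK_{i-1}\notin Z(G/K_{i-1})$, producing the required second-center element. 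Theorem \ref{intro Z_2>Z} then supplies a prime $p_i$ for which $K_i/K_{i-1}$ is an elementary abelian $p_i$-group and $Z_2(G/K_{i-1})/Z(G/K_{i-1})$ is a $p_i$-group.

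The next step is to show $Z_{K_i}/Z_{K_{i-1}}$ is also an elementary abelian $p_i$-group. Since $[Z_{K_i},G]\le K_i\le Z_{K_{i-1}}$, the subgroup $Z_{K_i}$ lies in the preimage $W$ of $Z_2(G/K_{i-1})$, so $Z_{K_i}/Z_{K_{i-1}}$ embeds into the $p_i$-group $W/Z_{K_{i-1}}=Z_2(G/K_{i-1})/Z(G/K_{i-1})$; abelianness follows from $Z_{K_i}/K_i=Z(G/K_i)$ being abelian. For exponent $p_i$ I would use the commutator identity $[x^n,g]=\prod_{k=0}^{n-1}[x,g]^{x^k}$: for $x\in Z_{K_i}$ we have $[x,g]\in K_i$, and centrality of $K_i/K_{i-1}$ in $G/K_{i-1}$ forces $[x,g]^{x^k}\equiv [x,g]\pmod{K_{i-1}}$, giving $[x^{p_i},g]\equiv [x,g]^{p_i}\pmod{K_{i-1}}$, which lies in $K_{i-1}$ because $K_i/K_{i-1}$ has exponent $p_i$.

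The main obstacle is showing $p_i=p_{i+1}$ for each consecutive pair, and my plan is to compare $Z_2(G/K_{i-1})$ with $Z_2(G/K_i)$ via the quotient map $G/K_{i-1}\twoheadrightarrow G/K_i$. With $W$ as above, $[W,G]\le Z_{K_{i-1}}\le Z_{K_i}$ gives $W/K_i\le Z_2(G/K_i)$, so $W/Z_{K_i}$ embeds into $Z_2(G/K_i)/Z(G/K_i)$, a $p_{i+1}$-group; but $W/Z_{K_i}$ is also a quotient of the $p_i$-group $W/Z_{K_{i-1}}$. Hence if $W>Z_{K_i}$, the nontriviality of $W/Z_{K_i}$ forces $p_i=p_{i+1}$. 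The subtle case is $W=Z_{K_i}$; here my plan is to work instead with $K_{i+1}Z_{K_{i-1}}/Z_{K_{i-1}}$. The assumption $K_{i+1}\le Z_{K_{i-1}}$ would give $[Z_{K_{i+1}},G]\le K_{i+1}\le Z_{K_{i-1}}$, hence $Z_{K_{i+1}}\le W=Z_{K_i}$, contradicting $Z_{K_{i+1}}>Z_{K_i}$; so $K_{i+1}\not\le Z_{K_{i-1}}$, and then $K_{i+1}Z_{K_{i-1}}/Z_{K_{i-1}}$ is a nontrivial subgroup of the $p_i$-group $Z_{K_i}/Z_{K_{i-1}}$ which is simultaneously a quotient of the $p_{i+1}$-group $K_{i+1}/K_i$, again forcing $p_i=p_{i+1}$.

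Induction on $i$ then collapses all the $p_i$ to a common prime $p$, making $K_k/K_{j-1}$ and $Z_{K_k}/Z_{K_{j-1}}$ both $p$-groups, and the ``in particular'' statement is immediate from the elementary abelian structure of each successive factor established along the way.
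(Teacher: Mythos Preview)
Your proposal is correct and follows essentially the same approach as the paper: apply Theorem~\ref{intro Z_2>Z} at each level to extract a prime $p_i$ and obtain the elementary abelian structure, then show $p_i=p_{i+1}$ for consecutive $i$ by finding a nontrivial group that is simultaneously a $p_i$-group and a $p_{i+1}$-group.

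The organizational differences are minor. The paper passes to the quotient $G/K_{i-1}$ to reduce to the case $i=1$, whereas you work at general $i$ throughout. Your second paragraph (the commutator-identity argument for exponent $p_i$) essentially reproves the relevant part of Theorem~\ref{intro Z_2>Z}(3) rather than citing it. Your case split for $p_i=p_{i+1}$ is on whether $W>Z_{K_i}$ or $W=Z_{K_i}$, while the paper (after reduction) splits on whether $K_{i+1}\le Z_{K_{i-1}}$ or not; these are related but not identical dichotomies. In your case $W=Z_{K_i}$ you derive $K_{i+1}\not\le Z_{K_{i-1}}$ and then use the same subquotient $K_{i+1}Z_{K_{i-1}}/Z_{K_{i-1}}$ as the paper does in its first case, and in your case $W>Z_{K_i}$ you use a second-center quotient just as the paper does in its second case. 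So the witnesses for $p_i=p_{i+1}$ are the same; only the entry point into the case analysis differs.
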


Note that the $K_i$'s are an ascending series of normal subgroups.  Using the centers of characters in a different fashion, we construct a descending chain of normal subgroups $G = \delta_1 \ge \delta_2 \ge \dotsb\ge \delta_i$.  We let $\delta_\infty$ be the terminal term in this series.  We are able to show the following:

\begin{introthm} \label{intro delta}
Let $G$ be a group.  Then $G/[\delta_i,G]$ is a nested group for all integers $i$.  In particular, $G$ is nested if and only if $\delta_\infty = 1$.
\end{introthm}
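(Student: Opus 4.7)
The plan is to establish the first claim by induction on $i$ and then deduce the equivalence in the second claim as an almost immediate corollary.

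For the base case $i = 1$, we have $\delta_1 = G$, so $[\delta_1, G] = G'$ and $G/G'$ is abelian. Every irreducible character of an abelian group is linear with center the whole group, so the centers vacuously form a chain and $G/[\delta_1, G]$ is nested.

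For the inductive step, the key identification is that $\irr{G/[\delta_i, G]}$ can be identified with $\{\chi \in \irr G : \delta_i \le Z(\chi)\}$, since $[\delta_i, G] \le \ker\chi$ if and only if $\delta_i/\ker\chi \le Z(G/\ker\chi)$, i.e., $\delta_i \le Z(\chi)$. Thus the inductive hypothesis asserts that the family $\{Z(\chi) : \delta_i \le Z(\chi)\}$ is totally ordered by inclusion, and the goal is to establish the same for the larger family $\{Z(\chi) : \delta_{i+1} \le Z(\chi)\}$. The strategy is to isolate the ``newly admitted'' characters, i.e.\ those $\chi$ with $\delta_{i+1} \le Z(\chi)$ but $\delta_i \not\le Z(\chi)$, and to argue from the construction of $\delta_{i+1}$ that their centers are mutually comparable and slot coherently at the bottom of the chain already provided by induction. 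This is the step that must invoke the precise definition of $\delta_{i+1}$, and here I would expect Theorems~A, B, and~C, applied to $G/[\delta_i, G]$, to provide the structural control needed -- in particular, controlling when $Z$ of the quotient enlarges and when the intersection of noncentral normal subgroups captures a single minimal one.

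For the second claim, the backward direction is immediate: if $\delta_\infty = 1$, then $G = G/[1, G] = G/[\delta_\infty, G]$ is nested by part one. For the forward direction, note that quotients of nested groups are nested (if $N\lhd G$ and $\psi \in \irr{G/N}$ is inflated from $\chi$, then $Z(\psi) = Z(\chi)/N$), so if $G$ is nested then each $G/[\delta_i, G]$ is trivially nested and the content lies in showing $\delta_i$ strictly descends until it reaches $1$. The main obstacle throughout is the inductive step: without the explicit definition of $\delta_{i+1}$ in hand, one must trust that it has been engineered so that the centers of newly admitted characters are forced comparable; dually, in the forward direction of the equivalence, one must show that nestedness of $G$ guarantees that this construction produces a proper descent at every nontrivial stage. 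Both points are opposite sides of the same coin, so a careful analysis of the defining intersection for $\delta_{i+1}$ should dispatch them simultaneously.
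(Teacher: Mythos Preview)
Your outline is sound, but the inductive step is not actually carried out, and the tools you point to are the wrong ones. Theorems~A, B, and~C concern the subgroup $K(G)$, which is an \emph{intersection of kernels}; by contrast $\delta_{i+1}$ is defined as a \emph{product of centers}, namely $\delta_{i+1} = \prod_{\chi \in \irr{G\mid[\delta_i,G]}} Z(\chi)$. These constructions are dual rather than identical, and the $K$-theorems give no direct control over $\delta_{i+1}$.

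The missing argument is in fact simpler than you anticipate. If $\psi$ is one of your ``newly admitted'' characters, so that $\delta_{i+1} \le Z(\psi)$ but $\delta_i \not\le Z(\psi)$, then $[\delta_i,G] \not\le \ker\psi$, so $\psi \in \irr{G\mid[\delta_i,G]}$. Thus $Z(\psi)$ is one of the factors in the product defining $\delta_{i+1}$, forcing $Z(\psi) \le \delta_{i+1}$; combined with the hypothesis $\delta_{i+1} \le Z(\psi)$ this gives $Z(\psi) = \delta_{i+1}$. Hence every new character has the \emph{same} center $\delta_{i+1}$, and since $\delta_{i+1} \le \delta_i$ (Lemma~\ref{contain}) this center sits below every member of the inductive chain. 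That finishes the inductive step cleanly. The paper organizes this slightly differently: it proves (Lemma~\ref{nested basics}) that for every normal $N \ge [\delta_i,G]$ one has $Z_N \in \{G,\delta_2,\ldots,\delta_i\}$, and then quotes a criterion from \cite{ML19gvz} characterizing nestedness via the collection of $Z_N$'s. For the forward direction of the equivalence, which you also leave open, the paper computes $\delta_i = X_{i-1}$ explicitly by induction, where $G = X_0 > \cdots > X_n = Z(G)$ is the chain of centers; strict descent is then immediate from $X_{i-1} > X_i$, and termination at $1$ follows once $[\delta_{n+1},G] = [X_n,G] = 1$.
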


The study of nested groups was initiated by Nenciu in \cite{AN12gvz} and \cite{AN16gvz} under the additional hypothesis of GVZ-groups.  A group $G$ is a {\it GVZ-group} if every irreducible character of $G$ vanishes off its center.  We note that the definition of nested in \cite{AN12gvz} and \cite{AN16gvz} is different than the definition used in this paper, but the second author has shown in \cite{ML19gvz} that the definition in this paper is equivalent to Nenciu's definition when $G$ is a GVZ-group.  

Using the vanishing-off subgroup for subgroups in \cite{MLvos09} and \cite{NM14} that generalizes the vanishing-off subgroup of a character, we define the subgroup $U (G)$.   To obtain results regarding $U (G)$, we will use results regarding Camina triples that were proved by Mlaiki in \cite{NM14}.  In particular, we will see that $U (G)$ is always a subgroup of $K (G)$ and often equals $K (G)$, and thus, we will see that $U (G)$ shares many of the properties of $K (G)$.  When $N$ is a normal subgroup of $G$, we define $\irr {G \mid N}$ to be the set of characters in $\irr G$ that do not have $N$ in their kernels.   One property that is different is the following:

\begin{introthm} \label{intro U prop}
Let $G$ be a group.  Then $U (G)$ is the largest subgroup of $G$ so that every character in $\irr {G \mid U(G)}$ is fully ramified with respect to $Z (G)$.
\end{introthm}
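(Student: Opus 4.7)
The plan is to combine the definition of $U(G)$ from \cite{MLvos09,NM14}, via the vanishing-off subgroup construction, with the orthogonality relations to convert vanishing off $Z(G)$ into full ramification. For any $\chi \in \irr G$ one automatically has $Z(G) \le Z(\chi)$, and hence $\chi_{Z(G)} = \chi(1)\lambda$ for some linear $\lambda \in \irr{Z(G)}$. By the second orthogonality relation,
\[
|G| \;=\; \sum_{g \in G} |\chi(g)|^2 \;\ge\; \sum_{z \in Z(G)} |\chi(z)|^2 \;=\; |Z(G)|\,\chi(1)^2,
\]
with equality if and only if $\chi$ vanishes on $G \setminus Z(G)$; thus $\chi$ is fully ramified with respect to $Z(G)$ (that is, $\chi(1)^2 = [G : Z(G)]$) exactly when $\chi$ vanishes off $Z(G)$. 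So the whole theorem reduces to identifying $U(G)$ as the largest normal subgroup $N$ for which $(G,N,Z(G))$ is a Camina triple in Mlaiki's sense.

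For the forward direction, let $\chi \in \irr{G \mid U(G)}$. The definition of $U(G)$ in Section~\ref{K section} is engineered precisely so that $(G, U(G), Z(G))$ is a Camina triple, and hence $\chi$ vanishes on $G \setminus Z(G)$; by the equivalence recalled above, $\chi$ is fully ramified with respect to $Z(G)$. For the maximality, suppose $U \lhd G$ has the property that every character in $\irr{G \mid U}$ is fully ramified with respect to $Z(G)$; then every such character vanishes on $G \setminus Z(G)$, so $(G, U, Z(G))$ is a Camina triple, and the universal property of $U(G)$ forces $U \le U(G)$.

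The main obstacle I anticipate is pinning down the universal property of $U(G)$ — that it is the unique largest normal subgroup $N$ for which $(G, N, Z(G))$ is a Camina triple — directly from the (as yet unstated) technical definition of $U(G)$ in terms of the vanishing-off subgroup. Once this is extracted from Mlaiki's Camina triple results in \cite{NM14} together with the vanishing-off machinery of \cite{MLvos09}, both directions of the theorem follow cleanly from the orthogonality computation translating vanishing off $Z(G)$ into the full ramification condition.
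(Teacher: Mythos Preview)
Your approach matches the paper's: it defines $U(G) = U(G \mid Z(G))$ and derives Theorem~\ref{intro U prop} directly from Lemma~\ref{uproperties}(1), which is exactly the universal property you are extracting, combined with the standard equivalence (your orthogonality computation) between $\chi$ vanishing on $G \setminus Z(G)$ and $\chi$ being fully ramified over $Z(G)$.

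Two small corrections. First, you have the Camina-triple arguments reversed throughout: in the paper's (and Mlaiki's) convention $(G,N,M)$ means every $g \in G \setminus N$ has $gM \subseteq \mathrm{cl}_G(g)$, so the relevant triple here is $(G, Z(G), N)$, not $(G, N, Z(G))$; cf.\ Lemma~\ref{nontrivueqpq}, where the triple is $(G, Z(G), U(G))$, and Lemma~\ref{U camina}, which gives $M \le U(G \mid N)$ iff $(G,N,M)$ is a Camina triple. With your ordering, $(G,N,Z(G))$ would force $Z(G) \le N$, which is the wrong direction. Second, $U(G)$ is introduced in Sections~\ref{vanishing section}--\ref{U section}, not Section~\ref{K section}.
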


Using $U (G)$ in a manner similar to $K (G)$, we will be able to determine when $G$ is a nested GVZ-group.  In particular, we have the following theorem:

\begin{introthm} \label{intro U > 1}
Let $G$ be a group.  Then $G$ is a nested GVZ-group if and only if $U (G/N) > 1$ for every proper normal subgroup $N$ of $G$.
\end{introthm}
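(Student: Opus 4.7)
Theorem \ref{intro U > 1} is the GVZ-version of Theorem \ref{intro K > 1}. The plan is to use the containment $U(G) \le K(G)$ (mentioned in the run-up to Theorem \ref{intro U prop}) together with the fully-ramified characterization of $U(G)$ provided by Theorem \ref{intro U prop} to mirror the argument behind Theorem \ref{intro K > 1}, carrying the GVZ hypothesis along throughout.

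\textbf{Forward direction.} Suppose $G$ is a nested GVZ-group and let $N \lhd G$ be proper. The classes of nested groups and of GVZ-groups are each closed under quotients, so $G/N$ is again a nested GVZ-group, and it suffices to prove that every nontrivial nested GVZ-group $H$ satisfies $U(H) > 1$. If $H$ is abelian this is immediate from Theorem \ref{intro U prop}, because $Z(H) = H$ and every irreducible character is fully ramified with respect to $Z(H)$. If $H$ is nonabelian, the chain of centers has bottom element $Z(H) = \bigcap_\chi Z(\chi)$, so some $\chi_0 \in \irr H$ satisfies $Z(\chi_0) = Z(H)$. Set
\[
L \;=\; \bigcap_{\chi \in \irr H,\ Z(\chi) > Z(H)} \ker\chi.
\]
Every $\chi \in \irr{H \mid L}$ then satisfies $Z(\chi) = Z(H)$, so is fully ramified with respect to $Z(H)$ by the GVZ hypothesis; Theorem \ref{intro U prop} forces $L \le U(H)$. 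It remains to show $L > 1$, which I would extract from Theorems \ref{intro Z2=Z} and \ref{intro Z_2>Z} in the same way the proof of Theorem \ref{intro K > 1} must: the nested structure produces a unique minimal normal subgroup of $H$ lying inside the kernel of every $\chi$ with $Z(\chi) > Z(H)$.

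\textbf{Reverse direction.} Assume $U(G/N) > 1$ for every proper normal subgroup $N$. Since $U(\cdot) \le K(\cdot)$, we also have $K(G/N) > 1$ for every proper $N$, so Theorem \ref{intro K > 1} gives that $G$ is nested. I show $G$ is a GVZ-group by induction on $|G|$. The hypothesis is visibly inherited by every proper quotient, so every $G/M$ with $M > 1$ is a nested GVZ-group by induction. Every $\chi \in \irr{G \mid U(G)}$ is fully ramified with respect to $Z(G)$ by Theorem \ref{intro U prop}, hence vanishes off $Z(G) \le Z(\chi)$ and so is a GVZ-character. Every remaining $\chi$ has $U(G) \le \ker\chi$ and is therefore inflated from an irreducible character of $G/U(G)$, which is GVZ by induction. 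Thus all of $\irr G$ consists of GVZ-characters, so $G$ is a GVZ-group and, combined with ``nested'', this yields the conclusion.

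\textbf{Main obstacle.} The delicate step is the nontriviality of $L$ in the forward direction; the chain structure alone does not obviously force it. A cleaner route I would pursue first is the auxiliary equality $U(H) = K(H)$ for every nested GVZ-group $H$---this should follow from Theorem \ref{intro U prop} together with the observation that in a GVZ-group the characters fully ramified with respect to $Z(H)$ are exactly those with center $Z(H)$---after which the forward direction reduces immediately to Theorem \ref{intro K > 1}.
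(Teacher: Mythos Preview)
Your proposal is correct, and both directions go through as sketched. The paper takes a slightly different route: it first proves the auxiliary equivalence ``$G$ is a nested GVZ-group if and only if $U_\infty = G$'' (Theorem~\ref{nested gvz U thm}), and then the corollary is obtained by copying the proof of Corollary~\ref{nested cor} (the body-text version of Theorem~\ref{intro K > 1}) verbatim with $U$ in place of $K$ and Theorem~\ref{nested gvz U thm} in place of Theorem~\ref{kappa series}. Your reverse direction is a bit more self-contained: you get ``nested'' from $U\le K$ plus Theorem~\ref{intro K > 1}, and then establish GVZ directly by induction on $|G|$, never invoking the $U$-series.

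Your ``main obstacle'' is not an obstacle. The subgroup $L$ you write down,
\[
L \;=\; \bigcap_{\chi \in \irr H,\ Z(\chi) > Z(H)} \ker\chi,
\]
is \emph{by definition} $K(H)$ (see the opening of Section~\ref{K section}), so $L > 1$ is immediate from Theorem~\ref{intro K > 1} applied to the nested group $H$. Your own suggested alternate route---proving $U(H) = K(H)$ for GVZ-groups and then citing Theorem~\ref{intro K > 1}---is exactly Lemma~\ref{U = K} in the paper and is indeed the cleanest way to handle the forward direction. The remark about the nested structure producing a ``unique minimal normal subgroup'' is not accurate for general nested groups (that conclusion in Theorem~\ref{intro Z2=Z} requires the extra hypothesis $K(G)\not\le Z(G)$), but as just noted it is not needed.
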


Finally, we will also use the vanishing-off subgroup to find a chain of subgroups $G = \epsilon_1 \ge \epsilon_2 \ge \dots \ge \epsilon_i$ so that $\epsilon_i \ge \delta_i$ and often $\epsilon_i = \delta_i$.  We write $\epsilon_\infty$ for the terminal element of this series.

\begin{introthm} \label{intro epsilon}
Let $G$ be a group.  Then $G/[\epsilon_i,G]$ is a nested GVZ-group for every integer $i$.  In particular, $G$ is a nested GVZ-group if and only if $\epsilon_\infty = 1$.
\end{introthm}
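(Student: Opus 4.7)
The plan is to follow the template of the $\delta$-argument in Theorem~\ref{intro delta}, but with the vanishing-off subgroup $U$ replacing $K$ and the characterization of nested GVZ-groups in Theorem~\ref{intro U > 1} replacing the characterization of nested groups in Theorem~\ref{intro K > 1}.  The work splits into two pieces: first prove the main assertion that $G/[\epsilon_i,G]$ is a nested GVZ-group for every $i$, and then derive the ``in particular'' equivalence from this by examining what happens when $\epsilon_\infty = 1$.

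For the main assertion, I would argue by induction on $i$.  The base case $i=1$ gives $G/[\epsilon_1,G] = G/[G,G]$, which is abelian and hence trivially a nested GVZ-group.  For the inductive step, by Theorem~\ref{intro U > 1} it suffices to show that $U\bigl((G/[\epsilon_i,G])/\overline{L}\bigr) > 1$ for every proper normal subgroup $\overline{L}$ of $G/[\epsilon_i,G]$.  By the correspondence theorem, such $\overline{L}$ lifts to a normal subgroup $L$ of $G$ with $[\epsilon_i,G] \le L < G$, and the assertion becomes $U(G/L) > 1$.  Since $[\epsilon_i,G] \le L$, the image of $\epsilon_i$ in $G/L$ is central, and the recursive construction of $\epsilon_{i+1}$ from $\epsilon_i$ via the vanishing-off subgroup should produce a canonical nontrivial element of $U(G/L)$ out of the image of $\epsilon_i$; I expect the key computation to mirror the analogous computation in the $\delta_i$ case, replacing appeals to $K$-properties with the corresponding statements for $U$.

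For the ``in particular'' part, suppose first that $\epsilon_\infty = 1$.  Choose $i$ large enough so that $\epsilon_i = 1$; then $[\epsilon_i,G] = 1$ and the first part says $G = G/[\epsilon_i,G]$ is a nested GVZ-group.  Conversely, suppose $G$ is a nested GVZ-group.  Since both properties pass to quotients, each $G/[\epsilon_i,G]$ is again a nested GVZ-group, so as long as $[\epsilon_i,G] < G$ Theorem~\ref{intro U > 1} applied to the trivial normal subgroup of $G/[\epsilon_i,G]$ gives $U(G/[\epsilon_i,G]) > 1$.  By the recursive definition of the $\epsilon$-series, this in turn forces $\epsilon_{i+1} < \epsilon_i$ whenever $\epsilon_i \ne 1$; since $G$ is finite the chain must eventually terminate at $1$, so $\epsilon_\infty = 1$.

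The main obstacle I anticipate is the inductive step of the first assertion, specifically extracting from the construction of $\epsilon_{i+1}$ the precise fact that every proper normal quotient of $G/[\epsilon_i,G]$ has nontrivial vanishing-off subgroup.  The delicate point is that $U(G)$ is in general only contained in $K(G)$ rather than equal to it, so the nested-group argument in Theorem~\ref{intro delta} cannot be copied verbatim; the gap between the two subgroups is precisely what distinguishes nested groups from nested GVZ-groups, and handling it will likely require invoking the Camina-triple results of Mlaiki mentioned earlier, together with the characterization in Theorem~\ref{intro U prop} identifying $U(G)$ as the largest subgroup for which every character in $\irr{G \mid U(G)}$ is fully ramified over $Z(G)$.
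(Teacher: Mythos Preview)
Your plan takes a genuinely different route from the paper, and the inductive step you flag as the obstacle is not filled in; as written, the proposal is a sketch rather than a proof.  The paper does \emph{not} verify the criterion of Theorem~\ref{intro U > 1} directly.  Instead it routes through the $\delta$-series: the key technical lemma shows that whenever $[\epsilon_{i+1},G] < [\epsilon_i,G]$ one has $\delta_{i+1} = \epsilon_{i+1}$, so that $[\epsilon_{i+1},G] = [\delta_{i+1},G]$ and the nested property of $G/[\epsilon_{i+1},G]$ comes for free from the already-established $\delta$-series results.  For the GVZ property the paper argues character by character: given $\chi \in \irr{G/[\epsilon_{i+1},G]}$, choose the minimal $j$ with $[\epsilon_{j+1},G]\le\ker\chi$; the same lemma gives $[\epsilon_j,G]/[\epsilon_{j+1},G]\le U(G/[\epsilon_{j+1},G])$, and then the observation that characters in $\irr{G\mid U(G)}$ satisfy $Z(\chi)=V(\chi)$ finishes the job.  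For the converse, the paper simply notes that $V(\chi)=Z(\chi)$ for all $\chi$ in a GVZ-group, so $\epsilon_i=\delta_i$ for every $i$, and invokes Theorem~\ref{intro delta}.

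Your approach can be made to work, but you would need to supply exactly what the paper's lemma supplies.  To show $U(G/L)>1$ for every proper normal $L\ge[\epsilon_i,G]$, take the minimal $j$ with $[\epsilon_j,G]\le L$; for $j\ge 2$ you must check that $[\epsilon_{j-1},G]L/L$ is a nontrivial subgroup of $U(G/L)$.  The crucial input is that every $\chi\in\irr{G\mid[\epsilon_{j-1},G]}$ vanishes off $\epsilon_j=V(G\mid[\epsilon_{j-1},G])$, and $\epsilon_j\le Z_{[\epsilon_j,G]}$ trivially, so such $\chi$ vanish off the center of $G/[\epsilon_j,G]$; this gives $[\epsilon_{j-1},G]/[\epsilon_j,G]\le U(G/[\epsilon_j,G])$, and then you push down to $G/L$ via the containment $U(\overline G)\overline L/\overline L\le U(\overline G/\overline L)$.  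Note that the induction on $i$ is not really doing work here---the argument is uniform over $j\le i$.

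Your converse argument also has a gap: the step ``$U(G/[\epsilon_i,G])>1$ forces $\epsilon_{i+1}<\epsilon_i$'' is not justified, and it is not clear how $U(G/[\epsilon_i,G])$ controls $\epsilon_{i+1}=V(G\mid[\epsilon_i,G])$ in general.  The paper avoids this entirely by observing that in a GVZ-group $\epsilon_i=\delta_i$ by definition (since $V(\chi)=Z(\chi)$), reducing immediately to the $\delta$-series theorem.  If you want to stick with your approach you could instead argue directly: for $\chi\in\irr{G\mid[\epsilon_i,G]}$ one has $\epsilon_i\not\le Z(\chi)=V(\chi)$, and since $G$ is nested the $V(\chi)$'s form a chain, so their product $\epsilon_{i+1}$ is the largest of them and is still properly contained in $\epsilon_i$.
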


We close this section by noting that we initially defined the subgroup $U$ and the chain of $\epsilon$'s independently.  We used them to characterize nested GVZ-groups in our preprint \cite{SBML2}.  However, we realized that those definitions could be generalized, which led to the characterizations of nested groups in terms of the subgroup $K$ and the chain of $\delta$'s. We then obtained the results for $U$ and the chain of $\epsilon$'s as consequences of the more general work.

\section{The subgroup $K(G)$}\label{K section}

Let $G$ be a group. Set $\mathcal{X} = \{ \chi \in \irr G \mid {Z} (\chi) > {Z} (G) \}$. Observe that if $\lambda$ is a linear character of $G$, then $Z(\lambda) = Z(G)$.  Hence, if $G$ is abelian, then $\mathcal {X}$ is empty.  On the other hand, if $G$ is nonabelian, then $Z (G) < G$ and $Z(1_G) = G$ and so, $1_G \in \mathcal {X}$.  It follows that $\mathcal {X}$ is empty if and only if $G$ is abelian.   When $G$ is abelian, we will set $K(G) = G$.   When $G$ is nonabelian, define $K (G) = \bigcap_{\psi \in \mathcal{X}} \ker(\psi)$.  Thus, $K (G)$ is the intersection of the kernels of these characters.  We begin by showing when $G$ is nonabelian that $K (G)$ is always contained in the derived subgroup.

\begin{lem}\label{G'}
If $G$ is a nonabelian group, then $K (G) \le G'$.
\end{lem}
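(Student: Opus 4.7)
The plan is to invoke the standard identification of $G'$ as the intersection of the kernels of all linear irreducible characters of $G$, a consequence of the fact that $G/G'$ is the maximal abelian quotient of $G$. Once this identification is in place, the inclusion $K(G) \le G'$ would follow immediately provided one checks that every linear character of $G$ lies in the set $\mathcal{X}$, because $K(G)$ is by definition the intersection of the kernels of the characters in $\mathcal{X}$: a smaller indexing set for the intersection on the $K(G)$ side will automatically yield a smaller (or equal) subgroup.

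The key step is thus to verify that every linear $\lambda \in \irr G$ belongs to $\mathcal{X}$. I would do this directly from the definition of the center of a character: for any $\lambda$ of degree one, $|\lambda(g)| = 1 = \lambda(1)$ for all $g \in G$, and therefore $Z(\lambda) = G$. Since $G$ is nonabelian we have $Z(G) < G$, so $Z(\lambda) > Z(G)$ and $\lambda \in \mathcal{X}$. Combining this with the reduction above yields
\[ K(G) = \bigcap_{\chi \in \mathcal{X}} \ker(\chi) \;\le\; \bigcap_{\lambda\text{ linear}} \ker(\lambda) = G'. \]

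There is essentially no substantive obstacle in this argument. The only point that deserves emphasis is that $\mathcal{X}$ contains considerably more than just the trivial character $1_G$ (which was the example singled out in the text immediately preceding the lemma); in fact every linear character of $G$ sits inside $\mathcal{X}$, and that enlarged family of characters is precisely what forces $K(G)$ to lie in $G'$.
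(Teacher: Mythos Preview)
Your proof is correct and follows essentially the same route as the paper's: both arguments observe that every linear character $\lambda$ satisfies $Z(\lambda)=G>Z(G)$ (since $G$ is nonabelian), so $\irr{G/G'}\subseteq\mathcal{X}$, and then conclude $K(G)=\bigcap_{\chi\in\mathcal{X}}\ker(\chi)\le\bigcap_{\lambda\text{ linear}}\ker(\lambda)=G'$. One minor remark: the sentence ``a smaller indexing set for the intersection on the $K(G)$ side will automatically yield a smaller (or equal) subgroup'' has the logic phrased backwards (a \emph{larger} indexing set yields a smaller intersection), though your displayed inequality is stated correctly.
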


\begin{proof}
Since $G$ is nonabelian, we have $Z(G) < G$.  Recall that if $\lambda \in \irr {G/G'}$, then $Z(\lambda) = G > Z(G)$.  This implies that $\irr {G/G'} \subseteq \mathcal {X}$ and so, we have $G' = \bigcap_{\lambda \in \irr {G/G'}} \ker (\lambda) \ge K(G)$.  
\end{proof}

Recall that a group $G$ is quasi-simple if $G$ is perfect (i.e. $G' = G$) and $G/Z(G)$ is nonabelian simple.  We now show that abelian and quasi-simple groups are the only groups where $K (G) = G$.

\begin{lem}\label{K = G}
Let $G$ be a group.  Then $K (G) = G$ if and only if $G$ is abelian or $G$ is quasi-simple.
\end{lem}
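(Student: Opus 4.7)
The plan is to first unwind what $K(G)=G$ means for a nonabelian group. Since $K(G)$ is defined as $\bigcap_{\psi\in\mathcal{X}}\ker(\psi)$, the equality $K(G)=G$ forces $\ker(\psi)=G$, hence $\psi=1_G$, for every $\psi\in\mathcal{X}$. Since $G$ is nonabelian, $Z(1_G)=G>Z(G)$, so $1_G\in\mathcal{X}$. Thus $K(G)=G$ is equivalent to $\mathcal{X}=\{1_G\}$, i.e.\ every nontrivial irreducible character $\chi$ of $G$ satisfies $Z(\chi)=Z(G)$. I will use this reformulation on both directions.

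For the forward direction, assume $G$ is nonabelian with $K(G)=G$. Lemma~\ref{G'} gives $G=K(G)\le G'$, so $G$ is perfect. To get that $G/Z(G)$ is simple, let $N\lhd G$ with $Z(G)<N<G$ and derive a contradiction. Any $\chi\in\irr{G/N}$ inflates to a character of $G$ with $\ker(\chi)\supseteq N$, so $Z(\chi)\supseteq\ker(\chi)\supseteq N>Z(G)$; thus $\chi\in\mathcal{X}$, and by the reformulation $\chi=1_G$. Hence $\irr{G/N}=\{1_G\}$, forcing $G=N$, a contradiction. So $Z(G)$ is a maximal proper normal subgroup of $G$, meaning $G/Z(G)$ is simple. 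Since $G$ is perfect and nonabelian, $G/Z(G)$ is a nontrivial perfect simple group, hence nonabelian simple. Therefore $G$ is quasi-simple.

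For the converse, the abelian case is immediate from the definition. If $G$ is quasi-simple, I will show $\mathcal{X}=\{1_G\}$. Let $\chi\in\mathcal{X}$. The center $Z(\chi)$ is a normal subgroup of $G$ containing $Z(G)$ (by Schur, $\rho(z)$ is scalar for $z\in Z(G)$, whence $Z(G)\le Z(\chi)$), and by hypothesis $Z(\chi)>Z(G)$. So $Z(\chi)/Z(G)$ is a nontrivial normal subgroup of the simple group $G/Z(G)$, forcing $Z(\chi)=G$, i.e.\ $\chi$ is linear. Since $G=G'$, the only linear character is $1_G$. Hence $\mathcal{X}=\{1_G\}$ and $K(G)=\ker(1_G)=G$.

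No step looks genuinely hard; the main thing to get right is the standard fact that $Z(G)\le Z(\chi)$ for every $\chi\in\irr G$, and the observation that inflating characters from $G/N$ produces elements of $\mathcal{X}$ whenever $N>Z(G)$. Once these are in place the argument is short in both directions. The only subtle point is making sure we really do conclude \emph{nonabelian} simplicity of $G/Z(G)$ in the forward direction, which follows because a perfect nontrivial quotient cannot be cyclic of prime order.
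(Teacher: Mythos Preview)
Your proof is correct and follows essentially the same approach as the paper. The only cosmetic differences are that the paper argues the forward direction by contrapositive (assuming $G$ is nonabelian and not quasi-simple, it exhibits $M$ with $Z(G)<M<G$ and deduces $K(G)\le M<G$), whereas you argue directly; and in the quasi-simple case the paper routes through $\ker(\chi)\le Z(G)$ rather than $Z(\chi)$, but both arguments reduce to the simplicity of $G/Z(G)$ in the same way.
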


\begin{proof}
By definition, $K (G) = G$ if $G$ is abelian.  Now suppose $G$ is quasi-simple and consider $1_G \ne \chi \in \irr G$.  Then $\ker (\chi) \le Z(G)$, and since $G/Z(G)$ is nonabelian simple, it follows that $Z(G/\ker (\chi)) = Z(G)/\ker (\chi)$.  Hence, $Z(\chi) = Z(G)$, and so, $\mathcal {X} = \{ 1_G \}$.  It follows that $K (G) = \ker (1_G) = G$.  On the other hand, suppose that $G$ is not abelian and not quasi-simple.  By Lemma \ref{G'}, we have $K(G) \le G'$.  If $G' < G$, then we have the conclusion; so, we may assume $G$ is perfect.  However, since $G$ is not quasimple, we know that $G/Z(G)$ is not simple, so there must exist a normal subgroup $M$ so that $Z(G) < M < G$.  If $\mu \in \irr {G/M}$, then $Z(G) < M \le Z(\mu)$.  It follows that $\irr {G/M} \subseteq \mathcal {X}$ and hence, $K (G) \subseteq \bigcap_{\mu \in \irr {G/M}} \ker (\mu) = M < G$.  This proves the lemma. 
\end{proof}

The next observation is immensely useful. In fact, it allows us to define $K(G)$ in a different way, without mention of characters. 

\begin{lem}\label{center kernel}
Let $G$ be a group and let $H \le G$.  If $\chi \in \irr G$, then $[H,G] \le \ker (\chi)$ if and only if $H \le Z(\chi)$.
\end{lem}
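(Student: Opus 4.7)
The plan is to reduce both directions to the standard character-theoretic identity $Z(\chi)/\ker(\chi) = Z(G/\ker(\chi))$ (this is a classical result, e.g.\ Isaacs' character theory book, Lemma~2.27). Once that is in hand, the statement becomes a purely group-theoretic reformulation of ``$H$ is central modulo $\ker(\chi)$''.

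For the forward direction, I would set $\overline{G} = G/\ker(\chi)$ and $\overline{H} = H\ker(\chi)/\ker(\chi)$. Assuming $[H,G] \le \ker(\chi)$, every commutator $[h,g]$ with $h \in H$, $g \in G$ becomes trivial in $\overline{G}$, so $\overline{H} \le Z(\overline{G}) = Z(\chi)/\ker(\chi)$, and pulling back gives $H \le H\ker(\chi) \le Z(\chi)$. For the reverse direction, assuming $H \le Z(\chi)$, we have $\overline{H} \le Z(\chi)/\ker(\chi) = Z(\overline{G})$, so $[\overline{H}, \overline{G}] = \overline{1}$, which is exactly $[H,G] \le \ker(\chi)$.

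There is no real obstacle here; the only thing to be careful about is citing (or else quickly justifying) the identity $Z(\chi)/\ker(\chi) = Z(G/\ker(\chi))$, since the lemma is essentially a repackaging of this fact. If the authors prefer a self-contained argument, the identity itself follows from viewing $\chi$ as a faithful character of $G/\ker(\chi)$ and noting that for a faithful irreducible character $\psi$ of a group $L$, $Z(\psi) = Z(L)$ (because $\psi$ restricted to $Z(\psi)$ is a multiple of a linear character, forcing $Z(\psi)$ into $Z(L)$, while $Z(L)$ always lies in $Z(\psi)$ by Schur's lemma applied to a representation affording $\psi$).
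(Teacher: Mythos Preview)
Your proof is correct and follows essentially the same approach as the paper: both directions reduce to the identity $Z(\chi)/\ker(\chi) = Z(G/\ker(\chi))$, passing to the quotient $G/\ker(\chi)$ and observing that $[H,G] \le \ker(\chi)$ is equivalent to $H\ker(\chi)/\ker(\chi)$ being central there. The paper's proof is slightly terser (it dismisses the converse as ``clear''), but the argument is the same.
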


\begin{proof}
Assume that $[H,G] \le \ker(\chi)$. Then we have the quotient: 
$$
H \ker(\chi)/\ker(\chi) \le Z (G/\ker(\chi)) = Z (\chi)/\ker (\chi),
$$ 
and so it follows that $H \le Z(\chi)$. The converse is clear, since $Z (G/\ker(\chi)) = Z(\chi)/\ker (\chi)$.
\end{proof}

Let $G$ be a group.  Define $\gamma_G(g) = \{ [g,x] \mid x \in G\}$ for every element $g\in G$.  We let $[g,G]$ denote the subgroup generated by $\gamma_G(g)$.  


The following lemma appeared in \cite{SBML}.  Since the proof is only one line, we include it again here. 

\begin{lem} \label{cen cond}
Let $G$ be a group.  Fix an element $g \in G$ and a character $\chi \in \irr G$.  Then $g \in Z(\chi)$ if and only if $[g,G] \le \ker (\chi)$.
\end{lem}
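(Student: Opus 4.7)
The plan is to derive this statement as an immediate consequence of Lemma \ref{center kernel}, applied with $H = \langle g \rangle$. In that lemma, $H$ can be any subgroup, and here we take the cyclic subgroup generated by $g$. Under this choice, the equivalence $[H,G] \le \ker(\chi) \iff H \le Z(\chi)$ specializes to the desired statement once we identify $[\langle g \rangle, G]$ with $[g, G]$ and $\langle g \rangle \le Z(\chi)$ with $g \in Z(\chi)$.

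The first identification, $[\langle g \rangle, G] = [g,G]$, is standard: by definition $[\langle g \rangle, G]$ is generated by commutators $[g^n, x]$ with $n \in \mathbb{Z}$ and $x \in G$, and the commutator identity $[g^n, x] = \prod_{i=0}^{n-1} [g, x]^{g^i}$ (together with $[g^{-n}, x] = ([g^n, x]^{-1})^{g^{-n}}$) shows that every such element lies in $[g,G]$, which is itself generated by the $[g, x]$ and hence contained in $[\langle g\rangle, G]$. The second identification, $\langle g \rangle \le Z(\chi) \iff g \in Z(\chi)$, is immediate since $Z(\chi)$ is a subgroup of $G$.

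The whole argument is essentially a one-line invocation of the preceding lemma, so there is no serious obstacle; the only thing to verify carefully is the normal-closure-style identity $[\langle g \rangle, G] = [g, G]$, which follows from the commutator identities noted above. Accordingly, the proof will consist of a single sentence citing Lemma \ref{center kernel} with $H = \langle g \rangle$.
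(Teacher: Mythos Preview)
Your proof is correct. The paper's proof is even more direct: it simply invokes the definition $Z(\chi)/\ker(\chi) = Z(G/\ker(\chi))$, from which the equivalence is immediate (since $g\ker(\chi)$ is central in $G/\ker(\chi)$ exactly when every $[g,x]$ lies in $\ker(\chi)$, i.e., when $[g,G] \le \ker(\chi)$). By going straight to the definition rather than through Lemma~\ref{center kernel}, the paper sidesteps the need to verify the identity $[\langle g\rangle,G] = [g,G]$; your route is perfectly valid but introduces that small extra verification.
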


\begin{proof}
This follows immediately from the definition that states: $Z(\chi)/\ker (\chi) = Z (G/\ker (\chi))$.
\end{proof}

We now obtain a different characterization of $\mathcal {X}$ and this yields a description of $K(G)$ in terms of the groups $[g,G]$

\begin{lem}\label{K intersection}
Let $G$ be a nonabelian group. Then $\mathcal {X} = \bigcup_{g \in G \setminus Z(G)} \irr {G/[g,G]}$, and thus, $\displaystyle K (G) = \bigcap_{g \in G\setminus Z(G)} [g,G]$.
\end{lem}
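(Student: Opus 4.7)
The plan is to prove the set-theoretic equality first and then derive the subgroup identity by intersecting kernels and reordering the intersection. The key device is Lemma~\ref{cen cond}, which translates the central condition $g \in Z(\chi)$ into the kernel condition $[g,G] \le \ker(\chi)$.

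For the inclusion $\mathcal{X} \subseteq \bigcup_{g \in G \setminus Z(G)} \irr{G/[g,G]}$, I would take $\chi \in \mathcal{X}$. The standard fact $Z(G) \le Z(\chi)$ (which follows by passing to $G/\ker(\chi)$ and using $Z(G)\ker(\chi)/\ker(\chi) \le Z(G/\ker(\chi))$), combined with the strict inequality $Z(\chi) > Z(G)$, produces an element $g \in Z(\chi) \setminus Z(G)$; Lemma~\ref{cen cond} then gives $[g,G] \le \ker(\chi)$, so $\chi \in \irr{G/[g,G]}$. For the reverse inclusion, if $g \in G \setminus Z(G)$ and $\chi \in \irr{G/[g,G]}$, then $[g,G] \le \ker(\chi)$, so Lemma~\ref{cen cond} places $g$ in $Z(\chi) \setminus Z(G)$, which forces $Z(\chi) > Z(G)$ and hence $\chi \in \mathcal{X}$.

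For the identity describing $K(G)$, I would substitute the above description of $\mathcal{X}$ into its definition and swap the order of intersection:
\[
K(G) \;=\; \bigcap_{\psi \in \mathcal{X}} \ker(\psi) \;=\; \bigcap_{g \in G \setminus Z(G)} \; \bigcap_{\chi \in \irr{G/[g,G]}} \ker(\chi).
\]
The inner intersection equals $[g,G]$, since the intersection of the kernels of all irreducible characters of any finite group is trivial, applied here to the quotient $G/[g,G]$. Combining these two steps yields the stated formula.

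I do not foresee any genuine obstacle; the argument is essentially a mechanical unwinding of Lemma~\ref{cen cond}. The only mild point to flag is that the nonabelian hypothesis on $G$ guarantees $G \setminus Z(G)$ is nonempty, so the indexing set in the union and in the intersection is not vacuous.
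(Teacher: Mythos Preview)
Your proposal is correct and follows essentially the same route as the paper's own proof: both directions of the set equality are obtained via Lemma~\ref{cen cond}, and the formula for $K(G)$ follows by rewriting $\bigcap_{\psi\in\mathcal{X}}\ker(\psi)$ as a double intersection and using that $\bigcap_{\chi\in\irr{G/[g,G]}}\ker(\chi)=[g,G]$. The only cosmetic differences are that you prove the two inclusions in the opposite order and make explicit the inequality $Z(G)\le Z(\chi)$, which the paper leaves implicit.
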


\begin{proof}
We work to prove the first conclusion.  Suppose $g \in G \setminus Z(G)$ and $\chi \in \irr {G/[g,G]}$.  By Lemma \ref{cen cond}, this implies that $g \in Z(\chi)$, and so, $Z(\chi) > Z(G)$.  This leads to the containment: 
$\bigcup_{g \in G \setminus Z(G)} \irr {G/[g,G]} \subseteq \mathcal {X}$.  Conversely, if $\chi \in \mathcal {X}$, then there exists some element $g \in Z(\chi) \setminus Z(G)$.  Applying Lemma \ref{cen cond}, we see that $\chi \in \irr {G/[g,G]}$.  This implies that $\mathcal {Z} \subseteq \bigcup_{g \in G \setminus Z(G)} \irr {G/[g,G]}$, and the first conclusion is proved.  

For each element $g \in G$, we have that $[g,G] = \bigcap_{\chi \in \irr {G/[g,G]}} \ker (\chi)$.  Hence, $\bigcap_{g \in G \setminus Z(G)} [g,G] = \bigcap_{g \in G \setminus Z(G)} \bigcap_{\chi \in \irr {G/[g,G]}} \ker(\chi)$.  Using the claim from the first paragraph, we have $\bigcap_{g \in G \setminus Z(G)} \bigcap_{\chi \in \irr {G/[g,G]}} \ker(\chi) = \bigcap_{\chi \in \mathcal {X}} \ker (\chi) = K(G)$.  This yields $K (G) = \bigcap_{g \in G \setminus Z(G)} [g,G]$, as desired.
\end{proof}

\section{Properties of $K(G)$}

We now survey some interesting properties of $K(G)$.  Notice that this includes Theorem \ref{intro Kprops}.

\begin{thm}\label{Kprops}
Let $G$ be a nonabelian group. Set $K = K(G)$.  Then the following hold: 
\begin{enumerate}[label={\bf(\arabic*)}]
\item If $N$ is normal in $G$, then either $K \le N$ or $N \le Z(G)$.  
\item If $N \lhd G$ and $K \nleq N$, then $Z_N/N = {Z} (G/N) = {Z}(G)/N$. 
\item If $N \lhd G$ and $K \nleq N$, then $KN/N \le K (G/N)$. 
\item If $Z_K = Z(G)$, then $K(G/K) = 1$.
\end{enumerate}
\end{thm}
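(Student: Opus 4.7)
The plan is to use Lemma \ref{K intersection} throughout, which recasts $K(G)$ as the intersection $\bigcap_{g\notin Z(G)}[g,G]$. Each of the four statements then boils down to book-keeping with this description together with the fact that, for $N\lhd G$, commutators $[g,G]$ are always contained in any normal subgroup containing $g$.

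For (1), if $N\nleq Z(G)$, I would pick an element $g\in N\setminus Z(G)$. Normality of $N$ gives $[g,G]\le N$, and Lemma \ref{K intersection} gives $K\le [g,G]\le N$, so the dichotomy holds. For (2), assuming $K\nleq N$, part (1) forces $N\le Z(G)$, so $Z(G)/N\le Z(G/N)=Z_N/N$ is automatic. For the reverse inclusion, take any $gN\in Z(G/N)$; then $[g,G]\le N$. If $g\notin Z(G)$, Lemma \ref{K intersection} would give $K\le[g,G]\le N$, a contradiction, so $g\in Z(G)$. This yields $Z(G/N)=Z(G)/N$.

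For (3), assume again $K\nleq N$. If $G/N$ is abelian there is nothing to prove, since by convention $K(G/N)=G/N$. Otherwise, use part (2) to identify $Z(G/N)=Z(G)/N$, so that the non-central elements of $G/N$ are precisely the cosets $gN$ with $g\in G\setminus Z(G)$. Applying Lemma \ref{K intersection} in $G/N$, together with the identity $[gN,G/N]=[g,G]N/N$, gives
\[
K(G/N)=\bigcap_{g\in G\setminus Z(G)}\frac{[g,G]N}{N}\ \ge\ \frac{KN}{N},
\]
since $K\le[g,G]$ for every such $g$.

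Part (4) is the most delicate. The hypothesis $Z_K=Z(G)$ carries the implicit assertion $K\le Z(G)$, hence $Z(G/K)=Z(G)/K$. I first observe $G/K$ is nonabelian (otherwise $Z(G/K)=G/K$ would force $Z(G)=G$). Then I would argue that under the inflation-deflation correspondence $\chi\leftrightarrow\bar\chi$, the sets $\mathcal{X}(G)$ and $\mathcal{X}(G/K)$ are in bijection: the condition $Z(\bar\chi)>Z(G/K)$ translates under $Z(G/K)=Z(G)/K$ to $Z(\chi)>Z(G)$, i.e.\ $\chi\in\mathcal{X}(G)$; and conversely every $\chi\in\mathcal{X}(G)$ satisfies $K\le\ker\chi$ by the very definition of $K$, so it descends to $G/K$. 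Therefore
\[
K(G/K)=\bigcap_{\chi\in\mathcal{X}(G)}\ker(\chi)/K=K/K=1.
\]
The main obstacle, as I see it, is not in any calculation but in keeping the quantifiers straight in (4): one has to notice that the seemingly extra condition "$K\le\ker\chi$" imposed when lifting characters from $G/K$ is free, because $K$ was defined as that intersection in the first place, so $\mathcal{X}(G/K)$ is all of $\mathcal{X}(G)$ and nothing more.
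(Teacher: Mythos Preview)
Your proof is correct and essentially parallel to the paper's, with a small but pleasant difference in parts (1) and (2). The paper proves these directly from the character-theoretic definition of $K(G)$: for (1) it observes that $K\nleq N$ forces the existence of some $\chi\in\irr{G/N}$ with $Z(\chi)=Z(G)$, whence $N\le\ker(\chi)\le Z(\chi)=Z(G)$; for (2) it argues by contradiction that $Z_N>Z(G)$ would put every $\chi\in\irr{G/N}$ into $\mathcal X$, forcing $K\le N$. You instead use the commutator description from Lemma~\ref{K intersection} throughout, picking a noncentral element of $N$ (for (1)) or a preimage of a noncentral element of $G/N$ (for (2)) and squeezing $K$ under $[g,G]$. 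Both routes are equally short; yours has the minor advantage of being character-free, while the paper's keeps closer to the defining object $\mathcal X$.

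For (3) and (4) your argument and the paper's coincide almost verbatim: both invoke Lemma~\ref{K intersection} in $G/N$ together with $Z(G/N)=Z(G)/N$ to get (3), and both establish (4) by showing that inflation gives a bijection $\mathcal X(G/K)\leftrightarrow\mathcal X(G)$ and then intersecting kernels. Your remark that the ``extra'' condition $K\le\ker\chi$ is free because $K$ was defined as that intersection is exactly the point the paper makes.
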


\begin{proof}
We begin by noting that $K \nleq N$ implies $N \le Z(G)$ if $N$ is a normal subgroup of $G$.  Indeed if $K \nleq N$, then by the definition of $K$ there exists a character $\chi \in \irr{G/N}$ satisfying $Z(G) = Z(\chi)$.  Since $N \le Z(\chi)$, this implies that $N \le Z(G)$ proving (1).
	
Now, let $N$ be a normal subgroup of $G$ satisfying $K \nleq N$, and assume that ${Z} (G/N) = Z_N/N > {Z}(G)/N$. Consider a character $\chi \in \irr {G/N}$.  Since $N \le \ker(\chi)$, it follows that ${Z}(\chi) \ge Z_N > {Z}(G)$. In particular, $K \le \ker(\chi)$.  Hence, we have $\irr{G/N} \subseteq \irr{G/K}$, from which it follows that $K \le N$. This contradiction establishes (2).
	
Next, we prove (3). Assume that $N \lhd G$ and $K \nleq N$. Then we have $Z_N = Z(G)$ by (2). So by Lemma~\ref{K intersection}, we have $K (G/N) = \bigcap_{g \in G \setminus Z_N} [gN,G/N]$ and $K = \bigcap_{g \in G \setminus Z(G)} [g,G]$.  Hence, $K \le [g,G]$ for all $g \in G \setminus Z(G)$, and thus, $KN/N \le \bigcap_{g \in G \setminus Z(G)}[g,G]N/N$.  Notice that $\{aN \mid a \in \gamma_G (g) \} \subseteq \gamma_{G/N} (gN)$ for each $g \in G \setminus Z_N$ and $[g,G] \le N$ for each $g \in Z_N \setminus Z(G)$.  It follows that $\bigcap_{g \in G \setminus Z(G)} [g,G]N/N \le \bigcap_{g \in G \setminus Z_N} [gN,G/N]$, and this gives the containment in (2).  

Finally, we prove (4).  We are assuming that $Z_K = Z(G)$, which implies that $Z(G/K) = Z(G)/K$.  Following the definition of $\mathcal {X}$, we set $\mathcal {X}_K = \{ \chi \in \irr {G/K} \mid Z(\chi)/K > Z(G/K) \}$.  We claim that $\mathcal {X} = \mathcal{X}_K$.  Suppose that $\chi \in \irr {G/K}$ and $Z (\chi)/K > Z(G/K) = Z(G)/K$, and viewing $\chi$ as a character in $\irr G$, we have $Z (\chi) > Z(G)$.  Conversely, suppose $\chi \in \irr G$ with $Z (\chi) > Z(G)$.  By the definition of $K (G)$, we have $K (G) \le \ker (\chi)$ and $Z(G/K) = Z(G)/K \le Z(\chi)/K$.  This implies that $\chi \in \irr {G/K}$.  This prove the claim.  Now, applying the definition of $K (G)$, we have $K (G) = \bigcap_{\chi \in \mathcal {X}} \ker (\chi)$.  It follows that $K(G/K) = \bigcap_{\chi \in \mathcal {X}_K} \ker (\chi)/K = \bigcap_{\chi \in \mathcal {X}} \ker (\chi)/K = K(G)/K = 1$.
 \end{proof}

\begin{lem} \label{direct prods}
Let $M$ and $N$ be groups.
\begin{enumerate}[label={\bf(\arabic*)}]
\item If $M$ and $N$ are both nonabelian, then $K (M \times N) = 1$.
\item If $M$ is nonabelian and $N$ is abelian, then $K (M \times N) = K(M)$.
\end{enumerate}
\end{lem}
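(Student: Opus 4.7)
The plan is to use the character-theoretic definition $K(G)=\bigcap_{\chi\in\mathcal{X}}\ker(\chi)$ together with the standard product structure on $\irr{M\times N}$. Setting $G=M\times N$, every $\chi\in\irr G$ factors as $\chi=\alpha\times\beta$ with $\alpha\in\irr M$ and $\beta\in\irr N$, and for such a product character one has $\ker(\chi)=\ker(\alpha)\times\ker(\beta)$ and $Z(\chi)=Z(\alpha)\times Z(\beta)$. Since $Z(G)=Z(M)\times Z(N)$, the membership $\chi\in\mathcal{X}$ becomes the condition
\[
Z(\alpha)>Z(M) \quad\text{or}\quad Z(\beta)>Z(N).
\]

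For (1), both $M$ and $N$ are nonabelian, so the trivial characters already satisfy $Z(1_M)=M>Z(M)$ and $Z(1_N)=N>Z(N)$. Consequently, for every $\alpha\in\irr M$ the character $\alpha\times 1_N$ lies in $\mathcal{X}$ with kernel $\ker(\alpha)\times N$, and for every $\beta\in\irr N$ the character $1_M\times\beta$ lies in $\mathcal{X}$ with kernel $M\times\ker(\beta)$. Intersecting just these two subfamilies,
\[
K(G)\;\le\;\bigcap_{\alpha\in\irr M}\bigl(\ker(\alpha)\times N\bigr)\;\cap\;\bigcap_{\beta\in\irr N}\bigl(M\times\ker(\beta)\bigr)\;=\;(1\times N)\cap(M\times 1)\;=\;1,
\]
where $\bigcap_{\alpha}\ker(\alpha)=1$ by faithfulness of the regular representation. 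Hence $K(G)=1$.

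For (2), $N$ is abelian, so every $\beta\in\irr N$ is linear and $Z(\beta)=N$. The membership condition therefore collapses to $Z(\alpha)>Z(M)$, i.e.\ $\alpha\in\mathcal{X}_M$, and $\mathcal{X}_G=\{\alpha\times\beta:\alpha\in\mathcal{X}_M,\ \beta\in\irr N\}$. Then
\[
K(G)\;=\;\bigcap_{\alpha\in\mathcal{X}_M}\bigcap_{\beta\in\irr N}\bigl(\ker(\alpha)\times\ker(\beta)\bigr)\;=\;\Bigl(\bigcap_{\alpha\in\mathcal{X}_M}\ker(\alpha)\Bigr)\times\Bigl(\bigcap_{\beta\in\irr N}\ker(\beta)\Bigr)\;=\;K(M)\times 1,
\]
using once more that the kernels of the irreducible characters of the abelian group $N$ intersect in the identity. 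Under the natural identification this is $K(M)$.

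There is no serious obstacle; the only points to be careful with are verifying the description of $\mathcal{X}_G$ in each case and, for (1), remembering that we need not enumerate all members of $\mathcal{X}$ — the two distinguished subfamilies $\alpha\times 1_N$ and $1_M\times\beta$ already suffice to squeeze $K(G)$ down to the trivial subgroup.
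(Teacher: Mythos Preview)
Your proof is correct. It takes a genuinely different route from the paper's own argument: the paper invokes the commutator description $K(G)=\bigcap_{g\in G\setminus Z(G)}[g,G]$ (Lemma~\ref{K intersection}) and, for (1), simply picks noncentral $m\in M$ and $n\in N$ to get $K(G)\le[m,G]\cap[n,G]\le M\cap N=1$; for (2) it observes that $[mn,MN]=[m,M]$ whenever $N$ is abelian, so the two intersections coincide. You instead work straight from the character-theoretic definition, exploiting the product structure $Z(\alpha\times\beta)=Z(\alpha)\times Z(\beta)$ and $\ker(\alpha\times\beta)=\ker(\alpha)\times\ker(\beta)$ to describe $\mathcal{X}_G$ explicitly. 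Both arguments are short and clean; the paper's version illustrates the utility of the commutator reformulation, while yours shows that the result falls out of the original definition without that intermediate lemma.
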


\begin{proof}
We will let $G = M \times N$.  Suppose $M$ and $N$ are both nonabelian.  Then there exist elements $m \in M \setminus Z (M)$ and $n \in N \setminus Z(N)$.  It follows that $1 \ne [m,G] \le M$ and $1 \ne [n,G] \le N$.  By Lemma \ref{K intersection}, we have that $K (G) \le [m,G] \cap [n,G] \le M \cap N  = 1$.  Now, suppose that $M$ is nonabelian and $N$ is abelian.  Given $g \in G \setminus Z(G)$, we have $g = mn$ for some $m \in M$ and $n \in N$.  Observe that if $m \in Z(M)$, then $g \in Z(G)$, so we have $m \not\in Z(M)$.  Observe that $[g,G] = [mn,MN] = [m,M]$ since $N$ is central in $G$.  It follows that $\bigcap_{g \in G \setminus Z(G)} [g,G] = \bigcap_{m \in M \setminus Z(M)} [m,M]$, and applying Lemma \ref{K intersection}, we have $K (G) = K (M)$.
\end{proof}

Let $G$ be a group. We let $Z_i$ denote the $i^{\rm th}$ member of the upper central series. That is, we set $Z_0 = 1$ and define $Z_i$ recursively by $Z_{i+1}/Z_i = Z(G/Z_i)$.  It is not difficult to see that $Z_{i+1} = \{ g \in G \mid [g,G] \le Z_i\}$ for ever integer $i \ge 0$.  We note that $G$ is nilpotent if and only if $G = Z_n$ for some integer $n$.  We now study $K (G)$ in the situation where $Z_2 = Z(G)$.  This theorem includes Theorem \ref{intro Z2=Z}.

\begin{thm}
Let $G$ be a nonabelian group.  Assume $Z_2 = Z(G)$.  Then the following are true:
\begin{enumerate}[label={\bf(\arabic*)}]
\item $K(G)$ is the intersection of all the noncentral normal subgroups of $G$.
\item $K(G) \not\le Z(G)$ if and only if $G$ has a unique subgroup $N$ that is minimal among noncentral normal subgroups of $G$.  In this case, $K (G) = N$ and $K(G) Z(G)/Z (G)$ is the unique minimal normal subgroup of $G/Z (G)$.  Also, $K (G) = [n,G]$ for some noncentral element $n \in G$.
\end{enumerate}   
\end{thm}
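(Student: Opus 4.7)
The whole result should fall out quickly from the previously established Lemma~\ref{K intersection} and Theorem~\ref{Kprops}(1), once one unpacks the hypothesis $Z_2=Z(G)$. The plan starts with the basic translation: since $Z_2=\{g\in G : [g,G]\le Z(G)\}$, the assumption $Z_2=Z(G)$ is equivalent to saying that $[g,G]\not\le Z(G)$ for every noncentral $g$. In particular, $[g,G]$ is itself a \emph{noncentral} normal subgroup of $G$ for each $g\in G\setminus Z(G)$. This is the only place the hypothesis enters.

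For (1), I let $L$ denote the intersection of all noncentral normal subgroups of $G$. The inclusion $L\le K(G)$ is immediate from Lemma~\ref{K intersection}: by the initial observation, each of the groups $[g,G]$ appearing in the intersection defining $K(G)$ is itself a noncentral normal subgroup, so $L$ is contained in every one of them and therefore in $K(G)$. The reverse inclusion $K(G)\le L$ is just Theorem~\ref{Kprops}(1), which says that $K(G)\le N$ whenever $N$ is a noncentral normal subgroup.

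For (2), in one direction, I assume $K(G)\not\le Z(G)$. Then $K(G)$ is itself a noncentral normal subgroup, so by finiteness it contains a normal subgroup $N$ that is minimal among noncentral normal subgroups of $G$. Theorem~\ref{Kprops}(1) applied to $N$ forces $K(G)\le N$, whence $K(G)=N$. Any minimal noncentral normal subgroup of $G$ must contain $K(G)=N$ by the same argument, so uniqueness is automatic. For the converse, I assume the unique minimal noncentral normal subgroup $N$ exists. Every noncentral normal subgroup contains some minimal such subgroup, and by uniqueness that subgroup is $N$; hence $N$ lies in the intersection described in~(1), giving $N\le K(G)$. Since $N$ is one of the subgroups in that intersection, also $K(G)\le N$, so $K(G)=N\not\le Z(G)$.

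For the remaining statements: any minimal normal subgroup of $G/Z(G)$ lifts to a normal subgroup $M$ of $G$ with $M>Z(G)$; such an $M$ is noncentral, so $K(G)\le M$ by~(1), and minimality of $M/Z(G)$ forces $M=K(G)Z(G)$, proving uniqueness. Finally, pick any $n\in K(G)\setminus Z(G)$, which is possible since $K(G)\not\le Z(G)$. Then $[n,G]\le K(G)$ by normality of $K(G)$, while the opening observation shows $[n,G]$ is a noncentral normal subgroup, so $K(G)\le [n,G]$ by~(1), giving equality. I do not anticipate any serious obstacle; the only subtle point is keeping straight that the hypothesis $Z_2=Z(G)$ is used exactly to promote each $[g,G]$ with $g\notin Z(G)$ to a \emph{noncentral} normal subgroup, so that Theorem~\ref{Kprops}(1) applies to it.
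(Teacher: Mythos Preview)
Your proof is correct and follows essentially the same overall structure as the paper's argument, but with one notable streamlining. For part~(1), the paper works entirely through the description $K(G)=\bigcap_{g\notin Z(G)}[g,G]$ from Lemma~\ref{K intersection}: it first shows that every minimal noncentral normal subgroup has the form $[n,G]$ for some noncentral $n$, and that every $[g,G]$ contains such a minimal subgroup, thereby identifying the two intersections directly. You instead dispatch the inclusion $K(G)\le L$ in one line by invoking Theorem~\ref{Kprops}(1), which already says $K(G)\le N$ for every noncentral normal $N$. This is cleaner and avoids the detour through minimal noncentral normal subgroups in part~(1); the trade-off is that the paper's more explicit analysis in~(1) sets up the identification $N=[n,G]$ for use in~(2), whereas you recover that fact separately at the end via the same normality argument. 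For part~(2) and the remaining claims about $K(G)Z(G)/Z(G)$ and $K(G)=[n,G]$, your argument and the paper's are essentially identical.
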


\begin{proof}
Observe that every noncentral normal subgroup contains a subgroup that is minimal among the noncentral normal subgroups of $G$.  It is not difficult to see that this implies that the intersection of the noncentral normal subgroups of $G$ equals the intersection of the minimal noncentral normal subgroups of $G$.  Hence, it suffices to prove that the intersection of the minimal noncentral normal subgroups of $G$ equals $K (G)$.

Let $N$ be minimal noncentral normal subgroup of $G$.  Since $N$ is noncentral, there exists an element $n \in G \setminus Z(G)$.  Because $n$ is noncentral, we have $[n,G] > 1$ and as $Z_2 = Z(G)$, we see that $[n,G] \not\le Z(G)$.  On the other hand, $[n,G]$ is normal in $G$ and $[n,G] \le N$.  The minimality of $N$ implies that $N = [n,G]$.  

Now, if $g \in G \setminus Z(G)$, then $[g,G]$ will be a noncentral normal subgroup of $G$ since $Z_2 = Z(G)$.  It follows that $[g,G]$ contains a minimal noncentral normal subgroup of $G$.  Let $\mathcal {N} = \{ n \in G \setminus Z(G) \mid [n,G] {\rm ~is~minimal~noncentral~normal}\}$.  It follows that if $g \in G \setminus Z(G)$, then there exists $n \in \mathcal {N}$ so that $[n,G] \le [g,G]$.  It is not difficult to see that this implies that $\bigcap_{g \in G \setminus Z(G)} [g,G] = \bigcap_{n \in \mathcal {N}} [n,G]$ is the intersection of the minimal noncentral normal subgroups of $G$.  Applying Lemma \ref{K intersection}, we obtain (1).

Suppose $K (G) \not\le Z(G)$.  Then there exists $N \le K(G)$ so that $N$ is a minimal noncentral normal subgroup of $G$. Let $M$ be any minimal noncentral normal subgroup of $G$.  By (1), we have $K (G) \le M$.  Notice that we now have $N \le K (G) \le M$.  Notice that this implies that $K (G) = N = M$, and hence, $N$ is the unique minimal noncentral normal subgroup of $G$.  Notice that $K(G) Z(G) > Z (G)$, so $K (G) Z (G)/Z (G) > 1$.  Let $L/Z(G)$ be a minimal normal subgroup of $G/Z(G)$.  Then $L$ is a noncentral normal subgroup of $G$.  Because $K (G)$ is the unique minimal noncentral subgroup of $G$, we have that $K (G) \le L$, and so, $K (G) Z (G) \le L$.  Now, the fact that $L/Z(G)$ is minimal normal implies that $L = K(G) Z(G)$.  Since $L$ was arbitrary, it follows that $K(G) Z(G)/Z(G)$ is the unique minimal normal subgroup of $G/Z(G)$.  We apply the earlier paragraph to see that $K (G) = [n,G]$ for any $n \in K(G) \setminus Z (G)$.  

Finally, suppose that $G$ has a unique minimal noncentral normal subgroup $N$.  Then $N$ is contained in every noncentral normal subgroup of $G$, and so, $N \le K (G)$.  On the other hand, by the second paragraph, we have $N = [n,G]$ for any $n \in N \setminus Z(G)$, and so, $N = [n,G] \ge \bigcap_{g \in G\setminus Z(G)} [g,G] = K (G)$ by Lemma \ref{K intersection}.  We conclude that $N = K(G)$ and $K (G) \not\le Z(G)$ as desired.
\end{proof}

We next consider the case where $K(G)$ is central.

\begin{lem}
Let $G$ be a nonabelian group.  If $K(G) > 1$ and $K(G) \le Z(G)$, then every minimal normal subgroup of $G$ is central.
\end{lem}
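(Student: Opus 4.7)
The plan is to argue by contradiction using the dichotomy established in Theorem \ref{Kprops}(1). Let $N$ be a minimal normal subgroup of $G$, and suppose $N \not\le Z(G)$. Theorem \ref{Kprops}(1) asserts that for any normal subgroup $N$ of $G$, either $K(G) \le N$ or $N \le Z(G)$. Since we are assuming the latter fails, we must have $K(G) \le N$.

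Now $K(G)$ is a characteristic subgroup of $G$ (it is defined intrinsically from the character theory of $G$), so in particular $K(G) \lhd G$. By hypothesis $K(G) > 1$, and we have just shown $K(G) \le N$. The minimality of $N$ among nontrivial normal subgroups of $G$ then forces $K(G) = N$. But by hypothesis $K(G) \le Z(G)$, so $N \le Z(G)$, contradicting our assumption that $N$ is noncentral.

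There is no real obstacle here; the argument is a direct application of Theorem \ref{Kprops}(1) combined with the characteristic (hence normal) nature of $K(G)$ and the minimality of $N$. The only subtlety worth flagging is that one uses the hypothesis $K(G) > 1$ precisely at the step where minimality of $N$ collapses the chain $1 < K(G) \le N$ to equality.
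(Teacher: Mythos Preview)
Your proof is correct and follows essentially the same route as the paper: apply the dichotomy of Theorem~\ref{Kprops}(1), then use minimality of $N$ together with $K(G)>1$ to force $N=K(G)\le Z(G)$. The paper phrases it directly rather than by contradiction, but the logic is identical.
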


\begin{proof}
Let $N$ be a minimal normal subgroup of $G$.  By Lemma \ref{Kprops}, we have either $K(G) \le N$ or $N \le Z(G)$.  Since $N$ is minimal normal, if $K (G) \le N$, then $N = K (G)$ and we are assuming $K (G) \le Z(G)$.  This proves the result.
\end{proof}

We now see that when $Z_2 > Z(G)$ that $K (G)$ must be central and elementary abelian.  This includes Theorem \ref{intro Z_2>Z} (1) and (2).

\begin{thm}\label{Z_2 > Z}
Let $G$ be a group.  Then the following are true:
\begin{enumerate}[label={\bf(\arabic*)}]
\item If $Z_2 > Z(G)$, then $K (G) \le Z (G)$ and $K (G)$ is an elementary abelian $p$-group for some prime $p$. 
\item If $Z_2 > Z(G)$ and $K(G) > 1$, then $Z_2/Z(G)$ is a $p$-group where $p$ is the prime dividing $|K (G)|$.
\item If $Z_2 \ge Z_{K (G)} > Z(G)$, then $K(G)$ and $Z_{K (G)}/Z(G)$ are elementary abelian $p$-groups for some prime $p$ and $K(G) = [g,G]$ for all $g \in Z_{K(G)} \setminus Z(G)$.
\end{enumerate}
\end{thm}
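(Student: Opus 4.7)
The plan is to exploit Lemma \ref{K intersection} together with the commutator identity $[ab,c] = [a,c]^b[b,c]$ specialized to elements of $Z_2$. Any $g \in Z_2 \setminus Z(G)$ forces $[g,G]$ to be a nontrivial central subgroup, so Lemma \ref{K intersection} immediately delivers $K(G) \le [g,G] \le Z(G)$, which is the first half of (1).

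For the prime-power structure, the key calculation is that when $g \in Z_2$, the commutator $[g,x]$ is central for every $x$, so the identity above simplifies to $[ab,c] = [a,c][b,c]$. This implies that $x \mapsto [g,x]$ is a homomorphism $G \to Z(G)$, and inductively $[g^n,x] = [g,x]^n$. Now pick any prime $p$ dividing $|Z_2/Z(G)|$ and choose $g \in Z_2$ with $gZ(G)$ of order $p$; then $g^p \in Z(G)$, so $[g,x]^p = [g^p,x] = 1$ for all $x$, meaning $[g,G]$ is an elementary abelian $p$-group, and hence so is its subgroup $K(G)$. This completes (1). For (2), assume $K(G) > 1$. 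If a second prime $q$ divided $|Z_2/Z(G)|$, repeating the same argument with $q$ in place of $p$ would force $K(G)$ to also be a $q$-group, contradicting $K(G) > 1$; therefore $Z_2/Z(G)$ is a $p$-group, with $p$ necessarily the prime dividing $|K(G)|$.

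For (3), the hypothesis $Z_{K(G)} > Z(G)$ rules out $K(G) = 1$ (otherwise $Z_{K(G)} = Z(G)$), so (2) applies and $Z_{K(G)}/Z(G) \le Z_2/Z(G)$ is a $p$-group, which is abelian since it sits inside $Z(G/Z(G))$. To promote to elementary abelian, I apply the same commutator trick to arbitrary $h \in Z_{K(G)}$: by definition $[h,G] \le K(G)$, and since $K(G) \le Z(G)$ is elementary abelian of exponent $p$, the identity $[h^p,x] = [h,x]^p$ gives $h^p \in Z(G)$, so $hZ(G)$ has order dividing $p$. Finally, the equality $K(G) = [g,G]$ for $g \in Z_{K(G)} \setminus Z(G)$ is immediate: $[g,G] \le K(G)$ by the definition of $Z_{K(G)}$, while $K(G) \le [g,G]$ is Lemma \ref{K intersection}. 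The only delicate step throughout is keeping track of the fact that the simplification $[g^n,x] = [g,x]^n$ requires $[g,x]$ to be central---which is exactly the content of $g \in Z_2$---and once this hypothesis is verified in each application, the rest of the argument is bookkeeping against Lemma \ref{K intersection}.
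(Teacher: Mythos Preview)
Your proof is correct and follows essentially the same route as the paper: both use Lemma~\ref{K intersection} to place $K(G)$ inside $[g,G]$ for $g\in Z_2\setminus Z(G)$, and both exploit the identity $[g^n,x]=[g,x]^n$ (valid because $[g,x]$ is central) to control exponents. The only stylistic difference is in (2): the paper fixes a nontrivial $k\in K(G)$ and shows directly that every nontrivial element of $Z_2/Z(G)$ has order divisible by $p$, whereas you argue by contradiction that a second prime $q$ dividing $|Z_2/Z(G)|$ would force $K(G)$ to have exponent dividing both $p$ and $q$, hence be trivial---a slightly slicker packaging of the same idea.
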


\begin{proof}
Suppose that $Z_2 > Z(G)$.  Then there exists $g \in Z_2 \setminus Z(G)$.  It follows that $[g,G] \le Z(G)$.  By Lemma \ref{K intersection}, we have $K (G) \le [g,G] \le Z(G)$.  We can find a prime $p$ and element $x \in Z_2 \setminus Z(G)$ so that $x^p \in Z(G)$.  Notice that $[x,G] = \gamma_G (x)$ since $[x,G] \le Z(G)$.  Given an element $k \in K(G)$, there must exist an element $y \in G$ so that $k = [x,y]$.  Now, $k^p = [x,y]^p = [x^p,y] = 1$ since $x^p \in Z(G)$.  Since $k$ was arbitrary, this implies that $K(G)$ is an elementary abelian $p$-group.  This proves (1).
	
Suppose in addition that $K(G) > 1$.  Since $K(G) > 1$, we can find $1 \ne k \in K(G)$.  Now, consider any $g \in Z_2 \setminus Z(G)$, and observe that $[g,G] = \gamma_G (g)$ and $K (G) \le [g,G]$ by Lemma \ref{K intersection}.  Thus, we can find an element $h \in G$ so that $[g,h] = k$.  We see that $1 = k^p = [g,h]^p = [g^p,h]$.  Thus, $g^p$ centralizes $h$ and $g$ does not centralize $h$.  This implies that $p$ divides the order of $g Z(G)$ since otherwise $\langle g, Z(G) \rangle = \langle g^p,Z(G) \rangle \le C_G (h)$ and $g$ would centralize $h$ which is a contradiction.  This implies that $p$ divides the order of every nontrivial element of $Z_2/Z(G)$ and thus, $Z_2/Z(G)$ must be a $p$-group.  This implies (2).
	
Now, assume that $Z_{K (G)} > Z(G)$.  Notice that $Z_{K (G)} \le Z_2 (G)$ and $K (G) > 1$, so by (2), $K (G)$ is an elementary abelian $p$-group.  Let $g \in Z_{K(G)} \setminus Z(G)$.  Then $[g,G] \le K (G)$ and $[g,G] \le K(G)$ by Lemma \ref{K intersection}.  Hence, we have $[g,G] = K(G)$.  We see that $1 = [g,h]^p = [g^p,h]$ for all $h \in G$ and so, $g^p \in Z(G)$.  This implies that $Z_{K(G)}/Z(G)$ is an elementary abelian $p$-group.
\end{proof}

We will present an example a nonabelian $p$-group $G$ with $K (G) > 1$ where $Z_{K(G)} = Z (G)$.  Our next two results show some interesting properties that do hold when $Z_{K(G)} > Z(G)$. 

\begin{lem}\label{K equiv conditions}
Let $G$ be a group satisfying $K (G) > 1$ and $Z_2 > Z(G)$.  Write $K = K (G)$. Then the following are equivalent:
\begin{enumerate}[label={\bf(\arabic*)}]
\item $Z_{K} > Z(G)$. 
\item There exists an element $g \in G$ satisfying $[g,G] = K$. 
\item $K = [Z_{K},G]$. 
\end{enumerate}
\end{lem}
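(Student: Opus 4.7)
The plan is to exploit the fact that, under the hypothesis $Z_2 > Z(G)$, Theorem \ref{Z_2 > Z}(1) gives $K \le Z(G)$, which in turn forces $Z(G) \le Z_K$ (since every element of $Z(G)$ lies in $\{x \in G : [x,G] \le K\} = Z_K$). Thus $Z_K$ is always sandwiched as $Z(G) \le Z_K$, and the inequality in (1) is about whether this containment is strict. With this observation in hand, each implication is short.

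For $(1) \Rightarrow (2)$, I would pick any element $g \in Z_K \setminus Z(G)$; by definition of $Z_K$ we have $[g,G] \le K$, while Lemma \ref{K intersection} yields the reverse inclusion $K \le [g,G]$ (since $g \notin Z(G)$), giving $[g,G] = K$. For $(2) \Rightarrow (3)$, if $[g,G] = K$ for some $g$, then $K > 1$ forces $g \notin Z(G)$ and $[g,G] \le K$ puts $g$ into $Z_K$; hence $K = [g,G] \le [Z_K,G]$, and the opposite inclusion $[Z_K,G] \le K$ is immediate from the definition of $Z_K$. For $(3) \Rightarrow (1)$, if $K = [Z_K,G]$, then $K > 1$ implies $Z_K \nleq Z(G)$, and combined with $Z(G) \le Z_K$ this gives $Z_K > Z(G)$.

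I do not expect a serious obstacle here; the content of the lemma is essentially a bookkeeping exercise using the two facts (a) $K \le Z(G)$ under the standing hypothesis and (b) the description of $K$ as an intersection of subgroups $[g,G]$ from Lemma \ref{K intersection}. The only mildly subtle point is remembering that $g \notin Z(G)$ is needed to apply Lemma \ref{K intersection} and hence to get $K \le [g,G]$; this is why in step $(1) \Rightarrow (2)$ one chooses $g$ specifically in $Z_K \setminus Z(G)$ (which is nonempty precisely by (1)), and why in $(2) \Rightarrow (3)$ one must first verify $g \notin Z(G)$ from $K > 1$ before concluding $g \in Z_K \setminus Z(G)$.
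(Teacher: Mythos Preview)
Your proof is correct and follows essentially the same route as the paper's. The only cosmetic difference is that for $(1)\Rightarrow(2)$ the paper simply cites Theorem~\ref{Z_2 > Z}(3) (which already records that $K=[g,G]$ for every $g\in Z_K\setminus Z(G)$), whereas you reprove that fact on the spot via Lemma~\ref{K intersection}; the remaining steps $(2)\Rightarrow(3)$ and $(3)\Rightarrow(1)$ are identical to the paper's.
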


\begin{proof}
By Lemma \ref{Z_2 > Z} (3), we see that if $Z_K > Z(G)$, then $K = [g,G]$ for every element $g \in Z_K \setminus Z(G)$.  This shows (1) implies (2).  Suppose there is an element $g \in G$ so that $[g,G] = K$.  Then $g \in Z_K$ by the definition of $Z_K$.  This implies that $K = [g,G] \le [Z_K,G] \le K$, and so, $K = [Z_K,G]$.  We have (2) implies (3).  Since $K > 1$, we see that if $[Z_K,G] = K$, then $Z_K > Z(G)$.  Hence, (3) implies (1).
\end{proof}

We note that this next result strongly uses the hypothesis that $K(G) > 1$.  This includes Theorem \ref{intro Z2=Z} (3).   

\begin{lem} \label{K equiv cond 2}
Let $G$ be a group satisfying $K (G) > 1$ and $Z_2 > Z(G)$.  Write $K = K(G)$.  Then the following are equivalent:
\begin{enumerate}[label={\bf(\arabic*)}]
\item $Z_K > Z(G)$.
\item $K (G/N) = KN/N$ for every normal subgroup $N$ of $G$ not containing $K$. 
\item Every irreducible character $\chi \in \irr {G/K}$ satisfies $Z (\chi) > Z(G)$.
\end{enumerate}
\end{lem}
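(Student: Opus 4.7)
My proof plan is to establish the cycle $(1) \Rightarrow (3) \Rightarrow (2) \Rightarrow (1)$. For $(1) \Rightarrow (3)$, let $\chi \in \irr{G/K}$, so $K \le \ker(\chi)$; the surjection $G/K \twoheadrightarrow G/\ker(\chi)$ sends the center $Z_K/K$ into $Z(G/\ker(\chi)) = Z(\chi)/\ker(\chi)$, giving $Z_K \le Z(\chi)$, and (1) then upgrades this to $Z(\chi) \ge Z_K > Z(G)$. For $(3) \Rightarrow (2)$, the containment $\mathcal{X} \subseteq \irr{G/K}$ is automatic from the definition $K(G) = \bigcap_{\psi \in \mathcal{X}} \ker(\psi)$, and (3) supplies the reverse, so $\mathcal{X} = \irr{G/K}$. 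Now for $N \lhd G$ with $K \not\le N$, Theorem~\ref{Kprops}(2) gives $Z(G/N) = Z(G)/N$, and a character $\chi \in \irr{G/N}$ satisfies $Z(\chi) > Z(G/N)$ if and only if $\chi \in \mathcal{X}$; the analog of $\mathcal{X}$ inside $\irr{G/N}$ is therefore $\mathcal{X} \cap \irr{G/N} = \irr{G/KN}$, and consequently $K(G/N) = \bigcap_{\chi \in \irr{G/KN}} \ker(\chi)/N = KN/N$.

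The substance is $(2) \Rightarrow (1)$, which I propose to prove by induction on $|G|$. The key structural observation is that (2) is inherited by $G/N$ whenever $N \lhd G$ and $K \not\le N$: the normal subgroups $N'/N$ of $G/N$ with $K(G/N) = KN/N \not\le N'/N$ correspond bijectively to the normal subgroups $N' \lhd G$ containing $N$ with $K \not\le N'$, and under this correspondence the equality $K((G/N)/(N'/N)) = K(G/N')$ is exactly (2) for $G$ at $N'$. The running hypotheses $K(G/N) > 1$ and $Z_2(G/N) > Z(G/N)$ also transfer (the latter because $Z_2(G/N) = Z_2/N$ and hence $Z_2(G/N)/Z(G/N) = Z_2/Z(G) > 1$). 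When $|K| > p$, the elementary abelian structure of $K$ from Theorem~\ref{Z_2 > Z}(1) supplies a subgroup $N \le K$ of order $p$, which is central and hence $G$-normal, with $1 < N < K$; induction applied to $G/N$ yields (1) for $G/N$, and Lemma~\ref{K equiv conditions} then produces $\bar g \in G/N \setminus Z(G/N)$ with $[\bar g, G/N] = K/N$. Lifting, $[g, G] N = K$, and since $N \le K$ this forces $[g, G] \le K$, which combined with the reverse inclusion $K \le [g, G]$ from Lemma~\ref{K intersection} gives $[g, G] = K$ and thus $g \in Z_K \setminus Z(G)$.

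The main obstacle is the base case $|K| = p$, where no $N$ lies strictly between $1$ and $K$. If $K$ is the unique minimal normal subgroup of $G$, then $K$ is the only subgroup of $Z(G)$ of order $p$, so the Sylow $p$-subgroup of $Z(G)$ is cyclic and $\Omega_1(Z(G)) = K$; pick $g \in Z_2 \setminus Z(G)$ with $g^p \in Z(G)$ (available by Theorem~\ref{Z_2 > Z}(2)), and observe that because $[g,h] \in Z(G)$ is central, both $g$ and $h$ commute with $[g,h]$, whence the identity $[g,h]^p = [g^p, h] = 1$ shows that $[g, G]$ has exponent $p$, so $[g, G] \le \Omega_1(Z(G)) = K$ and $g \in Z_K \setminus Z(G)$. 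Otherwise, $K$ is not the unique minimal normal subgroup, so some other minimal normal subgroup $N$ exists with $K \cap N = 1$; induction on $G/N$ then only delivers $g$ with $[g, G] \le KN$ rather than $[g, G] \le K$. Upgrading this is the most delicate step: one uses the direct decomposition $KN = K \oplus N$ (both summands central, trivial intersection), the bilinearity of the commutator bracket on $Z_2(G) \times G$, and a careful use of (2) applied to several normal subgroups to modify $g$ within its coset modulo $Z(G)$ and eliminate the $N$-component of $[g, G]$; this is where I expect the principal technical difficulty to concentrate.
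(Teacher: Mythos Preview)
Your implications $(1)\Rightarrow(3)$ and $(3)\Rightarrow(2)$ are correct; in fact your direct argument for $(3)\Rightarrow(2)$ via the identification $\mathcal{X}=\irr{G/K}$ is tidier than the paper's proof by contradiction. Your inheritance of (2) to quotients, your $|K|>p$ reduction, and your treatment of the sub-case ``$|K|=p$ and $K$ is the unique minimal normal subgroup'' are all fine as well.

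The gap is in the remaining sub-case of $(2)\Rightarrow(1)$: $|K|=p$ with a second minimal normal subgroup $N\ne K$. There you obtain $g\notin Z(G)$ with $K\le [g,G]\le KN$, and your plan is to ``modify $g$ within its coset modulo $Z(G)$ and eliminate the $N$-component of $[g,G]$''. This cannot work: for $z\in Z(G)$ one has $[gz,h]=[g,h]$ for every $h\in G$, so $[gz,G]=[g,G]$. Replacing $g$ by another element of $gZ(G)$ (in particular by a different lift of $\bar g$, since $N\le Z(G)$) leaves $[g,G]$ unchanged. The bilinearity you invoke is genuine, but to cancel the $N$-component of $h\mapsto[g,h]$ you would need some $g'\in Z_2$ with $[g',h]\equiv[g,h]^{-1}\pmod K$ for all $h$; absent such a $g'$ outside $g^{-1}Z(G)$, the product $gg'$ lands in $Z(G)$ and nothing is gained. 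In effect you are asking for an element of $Z_K\setminus Z(G)$, which is precisely statement (1), so the argument is circular at this point.

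The paper sidesteps your case split on $|K|$ entirely. It fixes at the outset an element $g\in Z_2$ with $g^p\in Z(G)$ for which $|[g,G]|$ is \emph{minimal} among all such elements, writes $[g,G]=K\times C_g$ (possible since $[g,G]$ is elementary abelian containing $K$), and assumes $C_g>1$. One then chooses a minimal normal subgroup $N\le[g,G]$ with $K\not\le N$, passes to $G/N$ (which inherits (2), exactly as you argue), and produces $x$ with $[xN,G/N]=KN/N$. The extra leverage is that one can show $[x,G]\le[g,G]$, whereupon the minimality of $|[g,G]|$ forces $[x,G]=[g,G]$ and hence $[g,G]=KN$. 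Repeating the argument with a \emph{different} minimal normal $L\le[g,G]$ not containing $K$ yields $[g,G]=KL$ as well, and this contradiction finishes the proof. It is this ``minimise $|[g,G]|$, then vary $N$'' mechanism that your outline is missing.
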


\begin{proof}
Assuming (1), we have that every character $\chi \in \irr {G/K}$ satisfies $Z(\chi) \ge Z_K > Z(G)$. Thus (3) holds.  We now show that (3) implies (2). Assume that $Z(\chi) > Z(G)$ for every character $\chi \in \irr {G/K}$, and assume to the contrary that there exists a normal subgroup $N$ not containing $K$ and satisfying $K (G/N) \ne KN/N$.  By Lemma \ref{Kprops} (2), we must have that $K(G/N) > KN/N$.  Then there exists a character $\psi \in \irr {G/N \mid K(G/N)}$ with $K$ in its kernel. Since $K$ is in the kernel of $\psi$, we have $Z(G) < Z(\psi)$ by our hypothesis.  On the other hand, since $K(G/N)$ is not contained in the kernel of $\psi$, viewed as a character of $G/N$, it must be that $Z(\psi)/N = Z(G/N)$.  Also, since $N$ is not contained in $K$, we use Lemma \ref{Kprops} (1) to see that $Z (G/N) = Z (G)/N$.  Combining these, we obtain $Z(G)/N < Z(\psi)/N = Z(G/N) = Z(G)/N$, which is a contradiction.  Hence, (2) holds.
	
Finally, we show that (2) implies (1). Assume that (1) is not true, and let $G$ be a counterexample of minimal order.  For every element $g \in Z_2$ that satisfies $g^p \in Z(G)$, it is not difficult to see that $[g,G]$ is elementary abelian.  By Lemma~\ref{K intersection}, we have $K \le [g,G]$.  So $[g,G]$ is elementary abelian and contains $K$, and it follows that $[g,G] = K \times C_g$ for some subgroup $C_g$.   Among the possible choices for $g$, we choose $g$ so that $|[g,G]|$ is minimal.
	
Since $G$ is a counterexample, we are assuming that $Z_K = Z(G)$.  In light of Lemma \ref{K equiv conditions}, we cannot have that $[g,G] = K$.  This implies that $[g,G] > K$, and hence, we must have $C_g > 1$.  Let $N \le [g,G]$ be a minimal normal subgroup of $G$ satisfying $K \nleq N$.  Note that $[g,G]/N = KN/N \times C_gN/N$.
	
We claim that $G/N$ satisfies (2). To that end, let $\overline{H}$ denote $HN/N$ for every subgroup $H \le G$, and suppose $\overline{H}$ is a normal subgroup of $\overline{G}$ not containing $K(\overline{G})$. Since $K(\overline{G}) = \overline{K}$, we deduce that $H$ does not contain $K$. So $K (\overline{G}/\overline{H}) \cong K(G/H) = KH/H \cong \overline{K}\overline{H}/\overline{H} = K(\overline{G})\overline{H}/\overline{H}$, as claimed.
	
Suppose $xN \in Z_{K (G/N)} \setminus Z(G/N)$.  By Lemma \ref{Z_2 > Z} (3), we have $[xN,G/N] = K(G/N)$.  On the other hand, using Lemma \ref{Kprops} (2), we obtain $Z(G/N) = Z(G)/N$.  Since $g \not\in Z(G)$, we see that $gN \not\in Z(G/N)$.  Using Lemma \ref{K intersection}, we deduce that $K(G/N) \le [gN,G/N]$.  Hence, $[xN,G/N] \le [gN,G/N]$.

Let $h \in G$.  Suppose $a \in \gamma_G (h)$, then $aN \in \gamma_{G/N} (hN)$.  We obtain $[h,G]N/N \le [hN,G/N]$.  Also, as $\gamma_G (g) = [g,G]$ contains $N$, this yields $\gamma_{G/N} (gN) = \{aN \mid a \in \gamma_G (g) \}$ and so, $[g,G]/N = [gN,G/N]$.  We now have $[x,G] \le [x,G]N \le [g,G]N = [g,G]$.  

Observe that $Z_{K (G/N)} \le Z_2 ((G/N)/(Z (G)/N)$.  It follows that $x \in Z_2$.  By Lemma \ref{Z_2 > Z}, we have that $(xN)^p \in Z(G/N)$ and so, $x^p \in Z(G)$.  By the minimality of $g$, we have $\norm {[g,G]} \le \norm {[x,G]}$.  This implies that $[x,G] = [g,G]$, and so, $[x,G]N/N \le K(G/N) = [xN,G/N] \le [gN,G/N] = [g,G]/N = [x,G]/N \le [x,G]N/N$.  We conclude that $K(G/N) = [g,G]/N$.

Recall that we are assuming $K (G/N) = KN/N$.  On the other hand, we have $K (G/N) = [g,G]/N = KN/N \times C_gN/N$.  This implies that $C_g \le N$ and since $N$ is minimal normal, $N = C_g$.  Certainly, $[g,G]$ contains a minimal normal subgroup $L$ different from $N$ and not containing $K$. The above argument shows that $C_g = L$, which is a contradiction. It follows that no such counterexample $G$ exists.
\end{proof}

Suppose $G$ is a group, if the character $\chi \in \irr G$ satisfies $Z (\chi) > Z(G)$, then the definition of $K (G)$ implies that $K (G) \le \ker (\chi)$.  Condition (2) of Lemma \ref{K equiv cond 2} implies that $Z (\chi) > Z(G)$ for every character $\chi \in \irr {G/K(G)}$ is equivalent to $Z_{K(G)} > Z(G)$.  This implies that if $Z_2 > Z(G)$ and $Z_K = Z(G)$, then there exists a character $\chi \in \irr {G/K}$ so that $Z(\chi)/K = Z(G)/K = Z(G/K)$.  Note that there exists a group $L$ so that $Z (\chi) > Z(L)$ for all $\chi \in \irr L$.  For example of such a group take $L$ to be  \verb+SmallGroup (32,27)+ in the computer algebra package Magma \cite{magma}.  Notice that if $G = L \times C_2$, where $C_2$ is the cyclic group of order 2, one obtains a group $G$ with a normal subgroup $N$ so that $Z(G/N) = Z(G)/N$ and every irreducible character $\chi \in \irr {G/N}$ satisfies $Z(\chi) > Z(G)$.  

\begin{lem}\label{cen int}
Let $N$ and $M$ be normal subgroups of $G$. Then $Z_{N \cap M} = Z_N \cap Z_M$.
\end{lem}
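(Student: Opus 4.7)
The plan is to prove the stated equality by verifying membership both ways using the elementary characterization
\[
Z_K = \{g \in G \mid [g,G] \le K\}
\]
for any normal subgroup $K$ of $G$. This characterization follows immediately from the defining condition $Z_K/K = Z(G/K)$: the element $gK$ lies in $Z(G/K)$ exactly when $[gK, G/K] = 1$, which is the same as $[g, G] \le K$.

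With that in hand, I would chain equivalences. Given $g \in G$, we have $g \in Z_{N \cap M}$ if and only if $[g,G] \le N \cap M$, which holds if and only if $[g,G] \le N$ and $[g,G] \le M$, which in turn is equivalent to $g \in Z_N$ and $g \in Z_M$, that is $g \in Z_N \cap Z_M$. This proves $Z_{N \cap M} = Z_N \cap Z_M$ at the level of elements, giving the desired equality of subgroups.

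There is essentially no obstacle here; the only point worth making carefully is the initial reduction to the commutator characterization of $Z_K$, and the fact that since both $N$ and $M$ are normal in $G$, so is $N \cap M$, so the notation $Z_{N \cap M}$ is indeed defined. Everything else is a direct translation of the set-theoretic identity $A \cap B \le K$ iff $A \le K$ and $A \le K$ applied to $A = [g,G]$.
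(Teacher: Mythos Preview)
Your argument is correct and is essentially the same as the paper's: both use the characterization $Z_K = \{g \in G \mid [g,G] \le K\}$ and reduce the equality to the fact that $[g,G] \le N \cap M$ if and only if $[g,G] \le N$ and $[g,G] \le M$. (Your final parenthetical misstates this as ``$A \cap B \le K$ iff $A \le K$ and $A \le K$''; you mean $A \le N \cap M$ iff $A \le N$ and $A \le M$, but the body of the proof is fine.)
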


\begin{proof}
It is clear that $Z_{N \cap M} \le Z_N \cap Z_M$. Conversely, suppose $g \in Z_N \cap Z_M$, then $[g,G] \le N \cap M$.  It follows that $g \in Z_{N \cap M}$, and so, we have $Z_N \cap Z_M \le Z_{N\cap M}$ as well.
\end{proof}

As a corollary, we characterize $Z_{K (G)}$.

\begin{lem}\label{Z_K}
If $G$ is a nonabelian group, then $Z_{K(G)} = \bigcap_{\psi \in \mathcal{X}} Z(\psi)$.  
\end{lem}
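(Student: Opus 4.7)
The plan is to combine the definition $K(G) = \bigcap_{\psi \in \mathcal{X}} \ker(\psi)$ with the fact that the operator $N \mapsto Z_N$ behaves well with respect to intersections. The crucial observation, immediate from the definitions, is that for any $\psi \in \irr{G}$ we have $Z(\psi) = Z_{\ker(\psi)}$, since both quotients $Z(\psi)/\ker(\psi)$ and $Z_{\ker(\psi)}/\ker(\psi)$ equal $Z(G/\ker(\psi))$. Once this identification is made, the right-hand side of the claimed identity becomes $\bigcap_{\psi \in \mathcal{X}} Z_{\ker(\psi)}$, which is exactly the form that Lemma~\ref{cen int} knows how to handle.

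With that in hand, the proof is a short deduction. Since $G$ is finite, $\mathcal{X}$ is a finite set, so an induction on $|\mathcal{X}|$ using Lemma~\ref{cen int} yields
\[
Z_{K(G)} \;=\; Z_{\bigcap_{\psi \in \mathcal{X}} \ker(\psi)} \;=\; \bigcap_{\psi \in \mathcal{X}} Z_{\ker(\psi)} \;=\; \bigcap_{\psi \in \mathcal{X}} Z(\psi),
\]
which is the claim.

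As an alternative (and arguably cleaner) route, the statement can be proved in a single pass using Lemma~\ref{cen cond} and bypassing Lemma~\ref{cen int} entirely: an element $g$ lies in $Z_{K(G)}$ iff $[g,G] \le K(G) = \bigcap_{\psi \in \mathcal{X}} \ker(\psi)$, which is iff $[g,G] \le \ker(\psi)$ for every $\psi \in \mathcal{X}$, which by Lemma~\ref{cen cond} is iff $g \in Z(\psi)$ for every $\psi \in \mathcal{X}$, i.e. iff $g \in \bigcap_{\psi \in \mathcal{X}} Z(\psi)$. Since the lemma is introduced as a corollary of Lemma~\ref{cen int}, I would present the first route as the main argument.

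There is no genuine obstacle here; the only point worth explicitly noting is the use of finiteness of $\mathcal{X}$ when iterating Lemma~\ref{cen int} (the second route avoids even this by working elementwise).
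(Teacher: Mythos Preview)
Your main argument is correct and matches the paper's proof exactly: identify $Z(\psi)$ with $Z_{\ker(\psi)}$ and then apply Lemma~\ref{cen int} to the (finite) intersection defining $K(G)$. Your alternative elementwise argument via Lemma~\ref{cen cond} is also valid and is a nice self-contained variant, but the paper does not use it.
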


\begin{proof}
By Lemma \ref{cen int}, we see that $\bigcap_{\psi \in \mathcal{X}} Z_{\ker(\psi)} = Z_{\bigcap_{\psi \in \mathcal{X}} \ker (\psi)} = Z_{K(G)}$.
\end{proof}

\section{Nested groups}\label{nested section}

This section is dedicated to characterizing nested groups in terms of $K$. Recall that a group is called {\it nested} if $\{ Z (\chi) \mid \chi \in \irr G\}$ is a chain with respect to inclusion.  In this case, we write $\{ Z (\chi) \mid \chi \in\irr G \} = \{ X_0, X_1,\dotsb, X_n \}$, where $G = X_0 > X_1 > \dotsb > X_n \ge 1$. Following \cite {ML19gvz}, we call this the {\it chain of centers} for $G$. We list here the results about nested groups from \cite{ML19gvz} that we require throughout this paper. 

\begin{lem}[{\normalfont\cite[Lemma 2.2, Corollary 2.5, Lemma 2.6]{ML19gvz}}]\label{lewisgvz}
Let $G$ be a nested group with chain of centers $G = X_0 > X_1 > \dotsb > X_n \ge 1$. Then the following statements hold: 
\begin{enumerate}[label={\bf(\arabic*)}]
\item $X_n = {Z} (G)$. 
\item $[X_i,G] < [X_{i-1},G]$ for each integer $1 \le i \le n$. 
\item Fix a character $\chi \in \irr G$.  Then ${Z} (\chi) = X_i$ if and only if $[X_i,G] \le \ker (\chi)$ and $[X_{i-1},G] \nleq \ker(\chi)$.
\end{enumerate}
\end{lem}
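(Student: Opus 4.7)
The plan is to prove the three parts sequentially, using Lemma~\ref{center kernel} (equivalently Lemma~\ref{cen cond}) to convert between containments of the form $H \le Z(\chi)$ and $[H,G] \le \ker(\chi)$, together with the fact that the $X_i$ form a strictly descending chain. For part~(1), since $\{Z(\chi) \mid \chi \in \irr G\} = \{X_0,\ldots,X_n\}$ is totally ordered, the minimum $X_n$ equals $\bigcap_{\chi \in \irr G} Z(\chi)$. One inclusion $Z(G) \le X_n$ is immediate, as every $Z(\chi)$ contains $Z(G)$. For the reverse, given $g \in X_n$, Lemma~\ref{cen cond} yields $[g,G] \le \ker(\chi)$ for every $\chi \in \irr G$, and since the kernels of the irreducible characters of $G$ intersect trivially, $[g,G] = 1$ and hence $g \in Z(G)$.

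For part~(2), the inclusion $[X_i,G] \le [X_{i-1},G]$ follows from $X_i \le X_{i-1}$. For strictness I would pick $\chi \in \irr G$ realizing $Z(\chi) = X_i$; Lemma~\ref{center kernel} gives $[X_i,G] \le \ker(\chi)$, and if we also had $[X_{i-1},G] \le \ker(\chi)$, the same lemma would force $X_{i-1} \le Z(\chi) = X_i$, contradicting strictness of the chain. For part~(3), both directions read off directly from Lemma~\ref{center kernel}. If $Z(\chi) = X_i$, then $X_i \le Z(\chi)$ translates to $[X_i,G] \le \ker(\chi)$, and $X_{i-1} \not\le X_i = Z(\chi)$ translates to $[X_{i-1},G] \not\le \ker(\chi)$. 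Conversely, the two kernel hypotheses give $X_i \le Z(\chi)$ and $X_{i-1} \not\le Z(\chi)$; since $Z(\chi)$ must coincide with some $X_j$ from the chain, the two comparisons together pin down $j = i$.

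The main subtlety, such as it is, lies in invoking $\bigcap_{\chi \in \irr G} \ker(\chi) = 1$ in part~(1), but this is a standard consequence of the faithfulness of the regular representation and does not constitute a real obstacle. Beyond that, the entire argument reduces mechanically to Lemma~\ref{center kernel} together with the total ordering of the chain of centers, so no additional machinery is needed.
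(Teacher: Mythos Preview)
Your argument is correct. Each of the three parts is handled cleanly: part~(1) via the identity $\bigcap_{\chi\in\irr G}Z(\chi)=Z(G)$ (which you justify through $\bigcap_{\chi}\ker(\chi)=1$), and parts~(2) and~(3) by direct application of Lemma~\ref{center kernel} together with the strict descent of the chain $X_0>\dotsb>X_n$. The only minor omission is that part~(3) as stated tacitly assumes $i\ge 1$ so that $X_{i-1}$ exists; the case $i=0$ (i.e.\ $Z(\chi)=G$) reduces to $\chi$ being linear, equivalently $[X_0,G]=G'\le\ker(\chi)$, with no second condition needed.

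As for comparison with the paper: the paper does \emph{not} prove this lemma. It is quoted verbatim from \cite{ML19gvz} (specifically Lemma~2.2, Corollary~2.5, and Lemma~2.6 there) and is used as background input for the results of Section~\ref{nested section}. Your proof therefore supplies a self-contained justification that the paper chose to outsource. The approach you take---reducing everything to the equivalence $H\le Z(\chi)\iff[H,G]\le\ker(\chi)$---is exactly the mechanism the paper itself relies on elsewhere (e.g.\ in the proof of Lemma~\ref{nested K}), so it is entirely in the spirit of the surrounding material.
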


We begin by investigating the $K$-series.  This next lemma will be the key in showing that the series defined by $K (G)$ is connected to determining if $G$ a nested group.

\begin{lem}\label{nested K}
Let $G$ be nested with chain of centers $G = X_0 > X_1 > \dotsb > X_n \ge 1$. Then $K(G) = [X_{n-1},G]$.
\end{lem}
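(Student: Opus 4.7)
The plan is to identify the set $\mathcal{X} = \{\chi \in \irr G \mid Z(\chi) > Z(G)\}$ that defines $K(G)$ with the full character set $\irr{G/[X_{n-1},G]}$. Once this identification is made, the identity $K(G) = [X_{n-1},G]$ follows immediately from the standard fact that $\bigcap_{\chi \in \irr{G/N}} \ker(\chi) = N$ for any normal subgroup $N \lhd G$, applied to $N = [X_{n-1},G]$.

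To carry out this identification, I would first invoke Lemma \ref{lewisgvz}(1) to note that $Z(G) = X_n$. Since $G$ is nested, every $\chi \in \irr G$ satisfies $Z(\chi) = X_i$ for some $0 \le i \le n$, and so $\chi \in \mathcal{X}$ if and only if $i \le n-1$. Because the chain of centers is totally ordered by inclusion, the condition $i \le n-1$ is equivalent to $X_{n-1} \le Z(\chi)$. Applying Lemma \ref{center kernel} with $H = X_{n-1}$ translates this last condition into $[X_{n-1},G] \le \ker(\chi)$. Thus $\mathcal{X} = \irr{G/[X_{n-1},G]}$.

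Intersecting the kernels of the characters in $\mathcal{X}$ then gives
\[
K(G) \;=\; \bigcap_{\chi \in \mathcal{X}} \ker(\chi) \;=\; \bigcap_{\chi \in \irr{G/[X_{n-1},G]}} \ker(\chi) \;=\; [X_{n-1},G],
\]
which is the desired conclusion. The argument is essentially a direct translation across the dictionary supplied by Lemmas \ref{lewisgvz} and \ref{center kernel}, and I do not expect any substantive obstacle. The only minor point to flag is the implicit assumption that $G$ is nonabelian, so that $n \ge 1$ and $X_{n-1}$ is meaningful; this is harmless since $X_{n-1}$ appears in the statement itself.
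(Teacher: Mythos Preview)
Your proposal is correct and follows essentially the same approach as the paper's own proof: both identify $\mathcal{X}$ with $\irr{G/[X_{n-1},G]}$ via Lemma~\ref{lewisgvz}(1) and Lemma~\ref{center kernel}, and then conclude by intersecting kernels. The arguments are virtually identical in structure and in the lemmas invoked.
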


\begin{proof}
Observe that
\[\bigcap_{\substack{\chi \in \irr G \\ Z(\chi) > Z(G)}} Z(\chi) = X_{n-1} > X_n = Z(G),\]
where the last equality holds by Lemma~\ref{lewisgvz} (1).  We have $K (G) = \cap_{\chi \in \mathcal{X}} \ker (\chi)$.  Recall that $\chi \in \mathcal{X}$ if and only if $Z(\chi) > Z(G) = X_n$.  It follows that $\chi \in \mathcal {X}$ if and only if $X_{n-1} \le Z(\chi)$.  By Lemma \ref{center kernel}, we see that $X_{n-1} \le Z (\chi)$ if and only if $[X_{n-1},G] \le \ker (\chi)$.  It follows that $\mathcal X = \irr {G/[X_{n-1},G]}$, and since the intersection of the characters in $\irr {G/[X_{n-1},G]}$ will be $[X_{n-1},G]$, we conclude that $K (G) = [X_{n-1},G]$.
\end{proof}

We define $K_i$ recursively by setting $K_0 = 1$ and defining $K_{i+1}/K_i = K(G/K_i)$. Observe, using Lemma \ref {G'}, that $K_{i+1} \le G'$ if $K_i < G'$ and, by definition, that $K_{i+1} = G$ if $K_i = G'$.  Since $G$ is finite, there exists a positive integer $n$ so that $K_n = K_{n+1}$ and so, $k_i = K_n$ for all integers $i \ge n$.  Using this value of $n$, we define $K_\infty = K_n$.  This next result includes Theorem \ref{intro K_infty}.

\begin{thm}\label{kappa series}
Let $G$ be a group. Then $K_\infty = G$ if and only if $G$ is nested.  Moreover, if $G$ is nested with chain of centers $G = X_0 > X_1 > \dotsb > X_n \ge 1$, then $K_i = [X_{n-i},G]$ for every integer $i$ such that $0 \le i \le n$.
\end{thm}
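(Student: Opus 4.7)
The plan is to prove the ``moreover'' formula $K_i = [X_{n-i},G]$ first, derive the forward implication as an immediate consequence, and then handle the converse by induction on $|G|$. I would establish the formula by induction on $i$. The base case is $K_0 = 1 = [Z(G),G] = [X_n,G]$, using $X_n = Z(G)$ from Lemma~\ref{lewisgvz}(1). For the inductive step, set $\bar G = G/K_{i-1}$, where the inductive hypothesis gives $K_{i-1} = [X_{n-i+1},G]$. The key task is to identify the chain of centers of $\bar G$: by Lemma~\ref{center kernel}, a character of $G$ factors through $\bar G$ precisely when $X_{n-i+1}\le Z(\chi)$, and nestedness of $G$ then forces $Z(\chi) = X_j$ for some $j\le n-i+1$. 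Since each such $X_j$ contains $X_{n-i+1}$, which in turn contains $K_{i-1}$, the chain $X_0/K_{i-1} > X_1/K_{i-1} > \dotsb > X_{n-i+1}/K_{i-1}$ is strict and realized as the chain of centers of $\bar G$, so $\bar G$ is nested. Applying Lemma~\ref{nested K} to $\bar G$, together with $K_{i-1}\le [X_{n-i},G]$ (from Lemma~\ref{lewisgvz}(2)), yields $K(\bar G) = [X_{n-i},G]/K_{i-1}$, and the defining recursion gives $K_i = [X_{n-i},G]$.

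The forward implication is immediate: taking $i = n$ shows $K_n = [X_0,G] = G'$, so $G/K_n$ is abelian, hence $K(G/K_n) = G/K_n$ by the convention of Section~\ref{K section}, and therefore $K_\infty = K_{n+1} = G$.

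For the converse I would induct on $|G|$. When $K(G) = G$, Lemma~\ref{K = G} gives that $G$ is abelian or quasi-simple; both are directly nested, and for quasi-simple $G$ one observes that every nontrivial irreducible character has kernel contained in $Z(G)$, forcing $Z(\chi) = Z(G)$, so the centers are just $\{G,Z(G)\}$. When $0 < K_1 < G$, a routine check shows that the $K$-series of $G/K_1$ is $K_{i+1}(G)/K_1$, so $K_\infty(G/K_1) = G/K_1$, and induction gives that $G/K_1$ is nested. The essential step is lifting nestedness back to $G$: for each $\chi\in\irr G$, either $Z(\chi) = Z(G)$, or $Z(\chi) > Z(G)$, in which case $\chi\in\mathcal X$ and $K_1\le\ker(\chi)$ by the definition of $K(G)$, so $\chi$ factors through $G/K_1$ and $Z(\chi)$ is the pullback of a center in the chain for $G/K_1$. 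Every $Z(\chi)$ therefore lies in the totally ordered collection obtained by adjoining $Z(G)$ to the pullback of the chain for $G/K_1$, so $G$ is nested.

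I expect the main obstacle to be the inductive step of the moreover formula: verifying strictness of the induced chain in $\bar G$ and correctly tracking $Z(\chi)$ under the projection $G\to\bar G$. Both reduce to careful application of Lemma~\ref{center kernel}, using that $X_j\ge X_{n-i+1}\ge K_{i-1}$ for $j\le n-i+1$.
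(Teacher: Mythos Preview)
Your proposal is correct and follows essentially the same strategy as the paper: induct on $i$ to prove the formula $K_i = [X_{n-i},G]$ by applying Lemma~\ref{nested K} to the quotient $G/K_{i-1}$, then for the converse induct on $|G|$, obtain that $G/K(G)$ is nested, and lift using that $K(G)\le\ker(\chi)$ whenever $Z(\chi)>Z(G)$. The only cosmetic differences are that the paper cites \cite[Lemma~2.7]{ML19gvz} for the chain of centers of $G/K_i$ where you re-derive it via Lemma~\ref{center kernel}, and the paper leaves implicit both the deduction of $K_\infty=G$ from $K_n=G'$ and the $K(G)=G$ base case that you spell out via Lemma~\ref{K = G}.
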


\begin{proof}
First let $G$ be nested with chain of centers $G = X_0 > X_1 > \dotsb > X_n \ge 1$.  We show that $K_i = [X_{n-i},G]$ for each integer $i$ such that $0 \le i \le n$ by induction on $i$.  We have just established the case $i = 1$ in Lemma \ref{nested K}.  The case $i = 0$ follows from the fact that $X_n = Z(G)$ (see Lemma \ref{lewisgvz} (1)). So suppose $i \ge 1$, and assume that we have $K_i = [X_{n-i},G]$.  Since $G$ is nested, $G/K_i$ is also nested.   Because $Z(G/K_i) = X_{n-i}/K_i$, the chain of centers for $G/K_i$ is given by $G/K_i = X_0/K_i > \dotsb > X_{n-i}/K_i \ge K_i/K_i$ (see Lemma 2.7 of \cite{ML19gvz}). By Lemma~\ref{nested K}, we have that
\[K_{i+1}/K_i = K(G/K_i) = [X_{n-i-1}/K_i,G/K_i] = [X_{n-(i+1)},G]/K_i, \]
since $K_i = [X_{n-i},G] < [X_{n-i-1},G]$ (see Lemma \ref{lewisgvz} (2)). This gives $K_{i+1} = [X_{n-(i+1)},G]$, as desired.

Now we prove the converse by induction on $\norm{G}$.  By the inductive hypothesis, we have that $G/K(G)$ is nested. Since $K(G) \le \ker(\chi)$ for every character $\chi \in \irr G$ satisfying $Z(\chi) > Z(G)$, it follows that $\mathcal{C} = \{Z(\chi) \mid \chi\in\irr G\ \,\text{and}~ Z (\chi) > Z (G) \}$ is a chain. Hence, $\{Z (\chi) \mid \chi \in \irr G\} = \mathcal {C} \cup \{Z(G)\}$ is also a chain, which means that $G$ is nested. 
\end{proof}

As a corollary, we have Theorem \ref{intro K > 1}.

\begin{cor} \label{nested cor}
Let $G$ be a group.  Then $G$ is nested if and only if $K (G/N) > 1$ for every proper normal subgroup $N$ of $G$.
\end{cor}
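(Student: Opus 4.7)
The plan is to deduce this corollary directly from Theorem \ref{kappa series}, which already characterizes nestedness via the terminal term $K_\infty$ of the iterated $K$-series. Both implications should reduce to comparing the single step $K(G/N)$ against this iteration, with almost no new content beyond what Theorem \ref{kappa series} already provides.

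For the forward direction, I would fix a proper normal subgroup $N$ of $G$ and first observe that $G/N$ is itself nested. This is immediate from the inflation correspondence: the characters of $G/N$ identify with the characters of $G$ containing $N$ in their kernel, and the center of such a character (as a character of $G/N$) is the image of its center in $G$; hence the centers of characters of $G/N$ form a subchain of the chain of centers of $G$. Now, if $G/N$ is abelian then $K(G/N) = G/N > 1$ directly from the definition, since $N < G$. If $G/N$ is nonabelian, Theorem \ref{kappa series} applied to $G/N$ yields $K_\infty(G/N) = G/N > 1$, and since the $K$-series is strictly ascending up to its stable value, this forces $K(G/N) = K_1(G/N) > K_0(G/N) = 1$.

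For the converse I would argue contrapositively. Assume $G$ is not nested; by Theorem \ref{kappa series}, $K_\infty < G$, so $K_\infty$ is a proper normal subgroup of $G$. By the very definition of the $K$-series, $K_{i+1}/K_i = K(G/K_i)$, and stabilization at $K_\infty = K_n = K_{n+1}$ means precisely that $K(G/K_\infty) = 1$. Thus $N = K_\infty$ is a proper normal subgroup witnessing failure of the hypothesis, so the hypothesis forces $G$ to be nested.

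The only subtle point --- the main (and genuinely minor) obstacle --- is verifying the quotient closure of nestedness used in the forward direction. Everything else is bookkeeping around Theorem \ref{kappa series} and the recursive definition of the $K_i$.
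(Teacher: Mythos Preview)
Your proof is correct and relies on the same key input as the paper, namely Theorem \ref{kappa series}. The forward direction is essentially identical to the paper's (both observe that $G/N$ is nested and that $K(G/N)=1$ would force $K_\infty(G/N)=1$). For the converse, the paper proceeds by induction on $|G|$: since $K(G)>1$ and $G/K(G)$ inherits the hypothesis, the inductive hypothesis gives that $G/K(G)$ is nested, whence $K_\infty(G/K(G))=G/K(G)$ and hence $K_\infty(G)=G$. Your contrapositive is more direct: if $G$ is not nested then $K_\infty$ is a proper normal subgroup, and the very definition of stabilization gives $K(G/K_\infty)=1$, immediately producing the required witness $N$. Both arguments are short and essentially equivalent in content, but yours avoids the induction scaffold entirely.
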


\begin{proof}
Suppose $G$ is nested.  If $N$ is a normal subgroup of $G$, then $G/N$ is nested.  If $N$ is proper, then $G/N > 1$.  Notice that if $K (G/N) = 1$, then $K_\infty (G/N) = 1$ violating Theorem \ref{kappa series}.  Conversely, suppose that $K (G/N) > 1$ for every proper normal subgroup $N$.  We work by induction on $|G|$.  If $|G| = 1$, then $G$ is nested.  Thus, we may assume $G > 1$.  By hypotheses, this implies that $K(G) > 1$.  Note that $G/K(G)$ will satisfy the inductive hypothesis; so by induction we have that $G/K(G)$ is nested.  Using Theorem \ref{kappa series}, this implies that $K_\infty (G/K (G)) = G/K (G)$.  It is easy to see that this implies that $K_\infty = G$ and applying Theorem \ref{kappa series} once again, we have that $G$ is nested.
\end{proof}

We now prove that the factor groups for the $K$-series share the same properties that the factor groups for the chain of centers of a nested group have, even when the group is not nested.  We start with a lemma.

\begin{lem} \label{Z_{K_i}}
Let $G$ be a group.  If $Z_{K_i} > Z_{K_{i-1}}$ and $[K_i,G] \le K_{i-1}$ for some integer $i$, then $Z_2 (G/K_{i-1}) > Z(G/K_{i-1})$.
\end{lem}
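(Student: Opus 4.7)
The plan is to unwrap the definitions carefully and exhibit an explicit element of $Z_2(G/K_{i-1}) \setminus Z(G/K_{i-1})$. Writing $\overline{G} = G/K_{i-1}$, recall that $Z(\overline G) = Z_{K_{i-1}}/K_{i-1}$ by the definition of $Z_{K_{i-1}}$.

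First I would reinterpret the two hypotheses inside $\overline{G}$. The assumption $[K_i,G] \le K_{i-1}$ says precisely that $\overline{K_i} = K_i/K_{i-1}$ is central in $\overline{G}$, so $K_i \le Z_{K_{i-1}}$. Next, since $Z_{K_i} > Z_{K_{i-1}}$, I can pick an element $g \in Z_{K_i} \setminus Z_{K_{i-1}}$. By the definition of $Z_{K_i}$, this element satisfies $[g,G] \le K_i$.

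Now I would pass to $\overline{G}$ and consider $\overline{g} = gK_{i-1}$. We have
\[
[\overline{g},\overline{G}] = [g,G]K_{i-1}/K_{i-1} \le K_i/K_{i-1} \le Z(\overline{G}),
\]
using the first hypothesis for the last inclusion. Thus $\overline{g} \in Z_2(\overline{G})$ by the standard characterization $Z_2 = \{x \mid [x,G] \le Z(G)\}$. On the other hand, $g \notin Z_{K_{i-1}}$ means $[g,G] \not\le K_{i-1}$, so $\overline{g} \notin Z(\overline{G})$. Therefore $Z_2(\overline{G})$ properly contains $Z(\overline{G})$, as required.

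There is no real obstacle here; the only thing to be careful about is keeping the two levels of quotient straight (one by $K_{i-1}$ and then the iterated center inside that quotient), and invoking the elementary identity $Z_{j+1} = \{x : [x,G] \le Z_j\}$ that the paper already records just before Theorem~\ref{Z_2 > Z}.
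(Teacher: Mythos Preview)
Your proof is correct and follows essentially the same route as the paper's: both observe that $[K_i,G]\le K_{i-1}$ forces $K_i/K_{i-1}\le Z(G/K_{i-1})$, and then use $[Z_{K_i},G]\le K_i$ to conclude that $Z_{K_i}/K_{i-1}$ lands inside $Z_2(G/K_{i-1})$ while strictly exceeding $Z(G/K_{i-1})$. The only cosmetic difference is that the paper phrases this as a chain of subgroup containments (invoking $K(G/K_{i-1})=K_i/K_{i-1}$), whereas you pick a single witness $g\in Z_{K_i}\setminus Z_{K_{i-1}}$.
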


\begin{proof}
We begin by observing that $[K_i,G] \le K_{i-1}$ implies that $K_i \le Z_{K_{i-1}}$.  Notice that $Z_{K_{i-1}}/K_{i-1} = Z(G/K_{i-1})$ and $K (G/K_{i-1}) = K_i/K_{i-1}$, so $Z_{K_i} > Z_{K_{i-1}}$ implies that $Z_2 (G/K_{i-1}) \ge Z_{K(G/K_{i-1})} = Z_{K_i}/K_{i-1} > Z(G/K_{i-1})$. 
\end{proof}

This next result which includes Theorem \ref{intro z_k} is a generalization of Theorem 1.4 (2) of \cite{ML19gvz}. 

\begin{thm}
Let $G$ be a group.  Suppose that there exists integers $1 \le j < k$ so that $Z_{K_i} > Z_{K_{i-1}}$ and $[K_i,G] \le K_{i-1}$ for all integers $i$ with $j \le i \le k$.   Then there is a prime $p$ so that $K_k/K_{j-1}$ and $Z_{K_k}/Z_{K_{j-1}}$ are $p$-groups.  In particular, $K_i/K_{i-1}$ and $Z_{K_i}/Z_{K_{i-1}}$ are elementary abelian $p$-groups for every integer $i$ with $j \le i \le k$.
\end{thm}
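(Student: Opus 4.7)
The plan is to split the argument into bookkeeping and a self-contained ``two-step'' claim. For every $i$ with $j\le i\le k$, Lemma~\ref{Z_{K_i}} lets me transfer the hypothesis to $G/K_{i-1}$, where Theorem~\ref{Z_2 > Z}(3) produces a prime $p_i$ such that $K_i/K_{i-1}$ and $Z_{K_i}/Z_{K_{i-1}}$ are elementary abelian $p_i$-groups. The whole proposition then reduces to showing that these primes coincide with a single prime $p$: once that is known, the filtrations $K_{j-1}\lhd K_j\lhd\cdots\lhd K_k$ and $Z_{K_{j-1}}\lhd Z_{K_j}\lhd\cdots\lhd Z_{K_k}$ have successive quotients that are elementary abelian $p$-groups, so both $K_k/K_{j-1}$ and $Z_{K_k}/Z_{K_{j-1}}$ are $p$-groups and the ``in particular'' clause is automatic.

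The heart of the argument is a two-step claim: \emph{if $H$ is a nonabelian group with $Z_{K(H)}>Z(H)$, $Z_{K_2(H)}>Z_{K(H)}$, and $[K_2(H),H]\le K(H)$, then the primes $p_1,p_2$ attached by Theorem~\ref{Z_2 > Z}(3) to the first two levels of the $K$-series of $H$ coincide.} To prove this I would first invoke Theorem~\ref{Z_2 > Z}(2) to get that $Z_2(H)/Z(H)$ is a $p_1$-group, and note that $[K_2(H),H]\le K(H)\le Z(H)$ forces $K_2(H)\le Z_2(H)$. Then I would split on whether $K_2(H)\le Z(H)$. If not, the non-trivial quotient $K_2(H)/(K_2(H)\cap Z(H))$ embeds into $Z_2(H)/Z(H)$ and so is a $p_1$-group, while because $K(H)\le Z(H)$ it is also a quotient of $K_2(H)/K(H)$ and so is a $p_2$-group; non-triviality forces $p_1=p_2$. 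If instead $K_2(H)\le Z(H)$, then $Z_{K_2(H)}\le Z_2(H)$, so the non-trivial quotient $Z_{K_2(H)}/Z_{K(H)}$ is a $p_1$-group as a quotient of one, and by Theorem~\ref{Z_2 > Z}(3) applied to $H/K(H)$ it is also a $p_2$-group, so again $p_1=p_2$.

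With the two-step claim at hand, for each $j\le i<k$ I would apply it to $H=G/K_{i-1}$. The $K$-series and upper $Z$-terms of $H$ are related to those of $G$ by the shift $K_l(H)=K_{l+i-1}/K_{i-1}$ (and the corresponding identification of the $Z$-terms), so the hypotheses of the two-step claim for $H$ are exactly the hypotheses of the main theorem at levels $i$ and $i+1$ of $G$, and its conclusion becomes $p_i=p_{i+1}$. Chaining these equalities yields the common prime $p$. I expect the main point of care to be the two-case split in the two-step claim---where one must locate the same non-trivial group witnessing both primes---together with the verification of the quotient correspondences; the remainder is accounting.
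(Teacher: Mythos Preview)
Your proposal is correct and follows essentially the same route as the paper: reduce to showing $p_i=p_{i+1}$, pass to $H=G/K_{i-1}$, use Theorem~\ref{Z_2 > Z}(2) to make $Z_2(H)/Z(H)$ a $p_1$-group, and then split on whether $K_2(H)\le Z(H)$, comparing a single nontrivial subquotient against both primes. One small point: your two-step claim should also carry the hypothesis $[K(H),H]=1$ (equivalently $K(H)\le Z(H)$), which is what lets you invoke Theorem~\ref{Z_2 > Z}(2) and place $K_2(H)\le Z_2(H)$; this is indeed supplied by the main hypothesis at level $i$, so the overall argument is unaffected.
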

	
\begin{proof}
In light of Lemma \ref{Z_{K_i}}, we may apply Lemma \ref{Z_2 > Z} (2) and (3) to see that there exists a prime $p_i$ so that $K_{i}/K_{i-1}$ and $Z_{K_i}/Z_{K_{i-1}}$ are elementary abelian $p_i$-subgroups and $Z_2 (G/K_{i-1})$ is a $p_i$-subgroup.  We need to show that the $p_i$'s are all equal.  Notice that it suffices to show that $p_{i+1} = p_i$ for every each $i$.  Thus, we fix an integer $i$.  Notice that we may replace $G$ by $G/K_{i-1}$.   Thus, we may assume that $i = 1$.  It suffices to show that $p_1 = p_2$.  By Lemma \ref{Kprops} (4), we see that $Z_{K} > Z_{K_0} = Z(G)$ implies that $K_2 > K_1$.  Since $Z_{K_2} > Z_{K_1}$ and $K_2 \le Z_{K_1}$, we have that $Z_2 (G/K_1) > Z(G/K_1)$, and so by Lemma \ref{Z_2 > Z}, we have $K_2/K_1 = K(G/K_1) \le Z(G/K_1) = Z_{K_1}/K_1$.  This implies that $K_2 \le Z_{K_1}$.  If $K_2 \not\le Z(G)$, then $K_2/(K_2 \cap Z(G)) \cong K_2 Z(G)/Z(G)$ is a nontrivial subgroup of $Z_{K_1}/Z(G)$.  This implies that $K_2/(K_2 \cap Z_1)$ is a $p_1$-group.  On the other hand, $K_2/K_1$ is a $p_2$-group and $K_1 \le K_2 \cap Z(G)$.  It follows that $p_2 = p_1$ in this case.  The other possibility is that $K_2 \le Z(G)$. In this case, we have $Z (G) < Z_{K_1} < Z_{K_2} \le Z_2$.  Since $Z_2/Z(G)$ is a $p_1$-group and $Z_{K_2}/Z_{K_1}$ is a $p_2$-group, it follows that $p_1 = p_2$.   
\end{proof}

We are not convinced that the hypothesis on conclusion (3) in this next lemma needs to be included.  We would not be surprised if it were still true without the extra hypotheses.

\begin{lem}
Let $G$ be a group. Suppose that there exists integers $1 \le j < k$ so that $Z_{K_{i}} > Z_{K_{i-1}}$ and $[K_i,G] \le K_{i-1}$ for all integers $i$ with $j \le i \le k$.  Then the following are true:
\begin{enumerate}[label={\bf(\arabic*)}]
\item $[Z_{K_i},G] K_{j-1} = K_i$ for each integer $i$ with $j \le i \le k$.
\item $[Z_{K_k},G] \le Z_{K_{j-1}}$ if and only if $[K_k,G] \le K_{j-1}$.
\item In the situation of (2), the exponent of $Z_{K_k}/Z_{K_{j-1}}$ equals the exponent of $K_k/K_{j-1}$.
\end{enumerate}   
\end{lem}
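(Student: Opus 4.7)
The plan is to prove (1), deduce (2) as a short formal consequence, and then obtain (3) from a commutator-power identity modulo $K_{j-1}$ that is enabled precisely by the hypothesis of (2).

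For (1), fix $i$ with $j \le i \le k$. By Lemma~\ref{Z_{K_i}} we have $Z_2(G/K_{i-1}) > Z(G/K_{i-1})$, and moreover in $G/K_{i-1}$ one has $Z(G/K_{i-1}) = Z_{K_{i-1}}/K_{i-1}$, $K(G/K_{i-1}) = K_i/K_{i-1}$, and $Z_{K(G/K_{i-1})} = Z_{K_i}/K_{i-1}$, strictly containing the center. Theorem~\ref{Z_2 > Z}(3) applied to $G/K_{i-1}$ then yields $[g,G]K_{i-1} = K_i$ for every $g \in Z_{K_i}\setminus Z_{K_{i-1}}$; combined with $[Z_{K_i},G] \le K_i$ and $K_{j-1} \le K_i$ this forces $[Z_{K_i},G]K_{i-1} = K_i$. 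I would then induct on $i$ to promote $K_{i-1}$ to $K_{j-1}$: the base $i=j$ is immediate, and for the step the induction hypothesis $[Z_{K_{i-1}},G]K_{j-1} = K_{i-1}$ gives
\[
K_i = [Z_{K_i},G]K_{i-1} = [Z_{K_i},G]\,[Z_{K_{i-1}},G]K_{j-1} \subseteq [Z_{K_i},G]K_{j-1} \subseteq K_i,
\]
using $Z_{K_{i-1}} \le Z_{K_i}$ and $[Z_{K_i},G] \le K_i$.

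For (2), the forward implication is formal: since $[Z_{K_k},G] \le K_k$ always, the hypothesis $[K_k,G] \le K_{j-1}$ gives $[[Z_{K_k},G],G] \le K_{j-1}$, which is exactly the defining condition $[Z_{K_k},G] \le Z_{K_{j-1}}$. For the converse, apply (1) to write $K_k = [Z_{K_k},G]K_{j-1}$; if $[Z_{K_k},G] \le Z_{K_{j-1}}$ then (using $K_{j-1} \le Z_{K_{j-1}}$) we get $K_k \le Z_{K_{j-1}}$, hence $[K_k,G] \le K_{j-1}$.

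For (3), assume the equivalent conditions of (2), set $e = \exp(K_k/K_{j-1})$ and $f = \exp(Z_{K_k}/Z_{K_{j-1}})$, and observe that $[K_k,G] \le K_{j-1}$ makes $K_k/K_{j-1}$ central in $G/K_{j-1}$. For any $z \in Z_{K_k}$ and $g \in G$ one has $[z,g] \in K_k$ and $[z,[z,g]] \in [Z_{K_k},K_k] \le [G,K_k] \le K_{j-1}$, so the standard commutator identity applied in $G/K_{j-1}$ yields $[z^n,g] \equiv [z,g]^n \pmod{K_{j-1}}$ for every positive integer $n$. Setting $n = f$ gives $[z,g]^f \equiv [z^f,g] \in K_{j-1}$, so every element of $[Z_{K_k},G]K_{j-1}/K_{j-1}$ has order dividing $f$; since (1) identifies this quotient with $K_k/K_{j-1}$, we obtain $e \mid f$. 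Setting $n = e$ gives $[z^e,g] \equiv [z,g]^e \in K_{j-1}$ for all $g$, whence $z^e \in Z_{K_{j-1}}$ and so $f \mid e$. The main obstacle is the commutator-power identity modulo $K_{j-1}$: the hypothesis $[K_k,G] \le K_{j-1}$ is precisely what makes $[z,g]$ commute with $z$ modulo $K_{j-1}$, and once this identity is in hand the two divisibilities fall out symmetrically.
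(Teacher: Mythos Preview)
Your proof is correct and follows essentially the same route as the paper's: for (1) you establish $[Z_{K_i},G]K_{i-1}=K_i$ in $G/K_{i-1}$ and induct down to $K_{j-1}$, for (2) you use (1) to pass between $K_k\le Z_{K_{j-1}}$ and $[Z_{K_k},G]\le Z_{K_{j-1}}$, and for (3) you use the commutator-power identity modulo $K_{j-1}$ just as the paper does. If anything your write-up is slightly more careful---your forward direction of (2) avoids invoking (1), and you explicitly justify why $[z^n,g]\equiv [z,g]^n$ only holds modulo $K_{j-1}$ (the paper writes it as a bare equality).
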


\begin{proof}
We prove (1) by induction.  In light of Lemma \ref{Z_{K_i}} we may apply Lemma \ref{K equiv conditions} in $G/K_{j-1}$ to see that $[Z_{K_j},G] K_{j-1}/K_{j-1} = [Z_{K_j}/K_{j-1},G/K_{j-1}] = K_j/K_{j-1}$.  This yields the equality $[Z_{K_j},G] K_{j-1} = K_j$.  Suppose for some $i$ with $j < i \le k+1$ that $[Z_{K_{i-1}},G] Z_{K_{j-1}} = K_{i-1}$.  Applying Lemma \ref{K equiv conditions} in $G/K_{i-1}$, we obtain $K_i = [Z_{K_i},G] K_{i-1} = [Z_{K_i},G] ([Z_{K_{i-1}},G] K_{j-1}) = [Z_{K_i},G] K_{j-1}$.  This proves (1).
	
By part (1), we have that $[Z_{K_k},G] K_{j-1} = K_k$.  Observe that $[Z_{K_k},G] K_{j-1} = K_k \le Z_{K_{j-1}}$ if and only if $K_k/K_{j-1} \le Z(G/K_{j-1})$.  And $K_k/K_{j-1} \le Z (G/K_{j-1})$ if and only if $[K_k,G] \le K_{j-1}$.  This proves (2).

Let $p^e$ be the exponent of $Z_{K_k}/Z_{K_{j-1}}$ and let $p^f$ be the exponent of $K_k/K_{j-1}$.  Suppose $x \in Z_{K_k}$; this implies that $x^{p^e} \in Z_{K_{j-1}}$ and $[x,g] \in K_k$ for all $g \in G$.  We have that $[x,g]^{p^e} = [x^{p^e},g] \in K_{j-1}$.  This implies that $e \le f$.  On the other hand, we see that $[x^{p^f},g] = [x,g]^{p^f} \in K_{j-1}$ for all $g \in G$ implies that $x^{p^f} \in Z_{K_k}$ and so, $f \le e$.  We conclude that $f = e$.
\end{proof}

In \cite{ML19gvz}, the second author shows that if $G$ is nested, then every member of the upper central series appears as the center of some irreducible character. The same turns out to be true about the lower central series. Moreover, the lower central series is a subseries of the $K$-series. 

\begin{lem}
Let $G$ be a nested group with chain of centers $G = X_0 > X_1 > \dotsb > X_n \ge 1$.  If $N$ is a normal subgroup of $G$, then $[N,G] = [X_i,G]$ for some integer $i$ with $0 \le i \ne n$. In particular, for every integer $i \ge 2$, there exists an integer $j$ with $0 \le j \le n$ such that $G_i = [X_j,G]$.
\end{lem}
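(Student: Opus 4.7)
The plan is to pass to characters using the equivalence $[H,G] \le \ker(\chi) \Leftrightarrow H \le Z(\chi)$ supplied by Lemma~\ref{center kernel}, together with the standard fact that for any normal subgroup $M \lhd G$ one has $M = \bigcap_{\chi \in \irr{G/M}} \ker(\chi)$. Applied to $M = [N,G]$, this gives
\[
[N,G] = \bigcap_{\chi:\ N \le Z(\chi)} \ker(\chi),
\]
so to identify $[N,G]$ with some $[X_i,G]$ it is enough to show that the set of characters satisfying $N \le Z(\chi)$ coincides with the set of characters satisfying $X_i \le Z(\chi)$ for an appropriate $i$.

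Given a normal subgroup $N$ of $G$, I would choose $i$ to be the largest index with $N \le X_i$; such an $i$ exists and satisfies $0 \le i \le n$ since $N \le G = X_0$. Because $G$ is nested, each $Z(\chi)$ is one of the $X_j$, so the condition $N \le Z(\chi)$ translates into a condition on the chain: if $Z(\chi) = X_j$, then by maximality of $i$ and the strict descent $X_0 > X_1 > \dotsb > X_n$, we have $N \le X_j$ if and only if $j \le i$, equivalently $X_i \le X_j = Z(\chi)$. Re-expressing both conditions via Lemma~\ref{center kernel}, the characters with $[N,G] \le \ker(\chi)$ are exactly those with $[X_i,G] \le \ker(\chi)$. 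Intersecting kernels over each side yields $[N,G] = [X_i,G]$.

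The second assertion follows immediately: for $i \ge 2$, the lower central series term $G_i = [G_{i-1},G]$, and $G_{i-1}$ is characteristic, hence normal, in $G$. Applying the first statement to $N = G_{i-1}$ yields $G_i = [X_j,G]$ for some $0 \le j \le n$. I do not expect any serious obstacle; the only point requiring mild care is verifying the equivalence $N \le Z(\chi) \Leftrightarrow X_i \le Z(\chi)$, which relies crucially on the fact that the nested hypothesis forces every $Z(\chi)$ to lie in the fixed chain, so that the maximal-$i$ choice really does separate the characters cleanly.
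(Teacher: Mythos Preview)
Your proof is correct. It differs from the paper's in the mechanism used to pin down the index $i$. The paper invokes an external result (Lemma~3.1 of \cite{ML19gvz}) asserting that $Z(G/[N,G]) = X_i/[N,G]$ for some $i$; from $X_i = Z_{[N,G]}$ one gets $[X_i,G] \le [N,G]$, and from $N \le Z_{[N,G]} = X_i$ one gets $[N,G] \le [X_i,G]$, yielding equality by double containment. You instead work entirely inside the present paper: choosing $i$ maximal with $N \le X_i$, you use the chain hypothesis on the $Z(\chi)$'s to show $\{\chi : N \le Z(\chi)\} = \{\chi : X_i \le Z(\chi)\}$, and then intersect kernels via Lemma~\ref{center kernel}. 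The two $i$'s in fact coincide (since $Z_{[N,G]}$ is the intersection of the centers $Z(\chi)$ over $\chi \in \irr{G/[N,G]}$, hence the smallest term of the chain containing $N$), so the arguments converge; but yours is self-contained and makes the character-theoretic identification explicit, while the paper's is terser at the cost of citing an outside lemma.
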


\begin{proof}
By Lemma 3.1 of \cite{ML19gvz}, we have that $Z (G/[N,G]) = X_i/[N,G]$ for some integer $i$ with $0 \le i \le n$. So $[X_i,G] \le [N,G]$.  But we also have $N \le X_i$, so $[N,G] \le [X_i,G]$. The second statement follows easily from the first.
\end{proof}

Let $G$ be a group.  Following the literature, we will say that a {\it minimal class} of $G$ is a non-central conjugacy class of $G$ whose size is minimal among the noncentral conjugacy classes of $G$.  If $G$ is a nested $p$-group, we have the following consequence of Theorem~\ref{Z_2 > Z}, which determines the size of a minimal class of $G$. 

\begin{lem}\label{min breadth}
If $G$ is a nested $p$-group with chain of centers $G = X_0 > X_1> \dotsb > X_n > 1$, then a minimal class of $G$ has size $\norm {[X_{n-1},G]}$.
\end{lem}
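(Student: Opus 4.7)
The goal is to show that the minimum size of a noncentral conjugacy class in $G$ equals $|[X_{n-1},G]|$. The strategy is to exhibit an element whose class attains this size and then verify that no noncentral element has a smaller class.

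Since $G$ is a nonabelian $p$-group, $Z_2 > Z(G)$, so Theorem~\ref{Z_2 > Z}(1) yields $K(G) \le Z(G)$. Combined with Lemma~\ref{nested K}, this gives $[X_{n-1},G] = K(G) \le Z(G)$, so in particular $X_{n-1} \le Z_2$. Now pick any $g \in X_{n-1} \setminus Z(G)$, which is nonempty by Lemma~\ref{lewisgvz}(1). Since $g \in Z_2$, every commutator $[g,x]$ lies in $Z(G)$, so the identity $[g,xy] = [g,y][g,x]^y$ collapses to $[g,xy] = [g,y][g,x]$, making $x \mapsto [g,x]$ a homomorphism $G \to Z(G)$ with kernel $C_G(g)$ and image $\gamma_G(g) = [g,G]$. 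Hence $|g^G| = [G : C_G(g)] = |[g,G]|$. Now $[g,G] \le [X_{n-1},G] = K(G)$, while Lemma~\ref{K intersection} forces $K(G) \le [g,G]$; combining gives $|g^G| = |K(G)| = |[X_{n-1},G]|$ (equivalently, $[g,G] = K(G)$ is exactly the content of Theorem~\ref{Z_2 > Z}(3) applied to the element $g$).

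For the reverse direction, I would show $|h^G| \ge |K(G)|$ for every noncentral $h$. When $h \in Z_2$, the same homomorphism argument gives $|h^G| = |[h,G]| \ge |K(G)|$, using Lemma~\ref{K intersection}. When $h \notin Z_2$, the image $\overline{h}$ in $\overline{G} := G/Z(G)$ is noncentral, and the quotient map restricts to a surjection $\gamma_G(h) \twoheadrightarrow \gamma_{\overline{G}}(\overline{h})$, giving $|h^G| \ge |\overline{h}^{\overline{G}}|$. Applying induction on $|G|$ to the nested $p$-group $\overline{G}$ (quotients of nested groups being nested) then yields $|\overline{h}^{\overline{G}}| \ge |K(\overline{G})|$.

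The main obstacle is the comparison $|K(\overline{G})| \ge |K(G)|$ required to close the induction. Via Lemma~\ref{nested K} applied in both groups, this reduces to $|[X_{n-2},G] Z(G)/Z(G)| \ge |[X_{n-1},G]|$. Theorem~\ref{Z_2 > Z}(1) applied inside $G/K_1$ forces $K_2 = [X_{n-2},G] \le X_{n-1}$, and together with the strict ascent $K_1 < K_2$ from Lemma~\ref{lewisgvz}(2) and the elementary-abelian structure in Theorem~\ref{Z_2 > Z}, a careful analysis of how $K_2$ meets $Z(G)$ should yield the required inequality.
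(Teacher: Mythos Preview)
Your upper bound is correct and matches the paper: an element of $X_{n-1}\setminus Z(G)$ has class size $|[X_{n-1},G]|$. Your treatment of the lower bound for $h\in Z_2$ is also fine.

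The gap is in the case $h\notin Z_2$, and it is real. You yourself flag the inequality $|K(\overline G)|\ge |K(G)|$ as the ``main obstacle'' and leave it unproved. Worse, the reduction you propose is not set up correctly: you assume that the chain of centers of $\overline G=G/Z(G)$ terminates at $X_{n-1}/Z(G)$ with penultimate term $X_{n-2}/Z(G)$, so that Lemma~\ref{nested K} gives $K(\overline G)=[X_{n-2},G]Z(G)/Z(G)$. But for any $\chi\in\irr{G/Z(G)}$ one has $Z(\chi)=Z_{\ker(\chi)}\ge Z_{Z(G)}=Z_2$, so $Z(\overline G)=Z_2/Z(G)=X_j/Z(G)$ where $X_j=Z_2$, and in general $j$ may be strictly less than $n-1$. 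Thus your identification of $K(\overline G)$ is unjustified, and the ``careful analysis of how $K_2$ meets $Z(G)$'' that you gesture at would have to be replaced by something else entirely before the induction could close.

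The paper sidesteps all of this. Instead of induction, it invokes a result of Longobardi, Maj, and Mann \cite{LMM99}: in any $p$-group, some minimal class is represented by an element of $Z_2$. Once you know that, your own argument for the $Z_2$ case finishes the proof in one line: for such a $g$ one has $m=|\gamma_G(g)|=|[g,G]|\ge|K(G)|=|[X_{n-1},G]|$ by Lemma~\ref{K intersection}. So the missing idea is this external input, not an internal induction.
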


\begin{proof}
Write $m = \min \{\norm {\mathrm{cl}_G (g)} \mid g \in G \setminus Z(G)\}$.  By Theorem~\ref{Z_2 > Z}, any element of $X_{n-1} \setminus X_n$ has class size $\norm {[X_{n-1},G]}$.  So $m \le \norm{[X_{n-1},G]}$. By \cite{LMM99}, there exists an element $g \in Z_2$ lying in a minimal class.  Since $\norm {\mathrm{cl}_G (g)} = \norm {[g,G]}$ and $[X_{n-1},G] = K (G) \le [g,G]$ by Lemma \ref{K intersection}, we have $m = \norm {[g,G]} \le \norm {[X_{n-1},G]} \le m$.  This proves that $\norm {[g,G]} = m$ as desired.
\end{proof}




The final theorem of this section shows that we can characterize nested $p$-groups by finding elements with certain properties. 

\begin{thm} \label{last sect 4}
If $G$ is a nonabelian $p$-group then the following statements are equivalent:
\begin{enumerate}[label={\bf(\arabic*)}]
\item $G$ is nested.
\item For every normal subgroup $N \le G$ with $G' \not\le N$, there exists an element $g_N \in G \setminus N$ depending on $N$ so that $\{\chi \in \irr {G/N} \mid Z(\chi) = Z_N \} = \{ \chi \in \irr {G/N} \mid g_N \notin Z(\chi)\}$. 
\item For every normal subgroup $N \le G$ with $G' \not\le N$, there exists a normal subgroup $L_N$ depending on $N$ so that $N < L_N \le G$ so that $\{ \chi \in \irr {G/N} \mid Z (\chi) = Z_N \} = \irr {G/N \mid L_N/N }$. 
\end{enumerate}
\end{thm}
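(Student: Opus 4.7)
The plan is to prove the cycle $(1) \Rightarrow (3) \Rightarrow (2) \Rightarrow (1)$. Throughout, when $N \lhd G$ satisfies $G' \not\le N$, the quotient $G/N$ is a nonabelian $p$-group, hence automatically satisfies $Z_2(G/N) > Z(G/N)$---the standing hypothesis needed to apply Lemmas \ref{K equiv conditions} and \ref{K equiv cond 2} inside $G/N$.

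For $(1) \Rightarrow (3)$: if $G$ is nested, so is $G/N$. I would write its chain of centers as $G/N = Y_0 > Y_1 > \dotsb > Y_m = Z(G/N)$, where the last equality is Lemma \ref{lewisgvz}(1). Lemma \ref{nested K} applied in $G/N$ gives $K(G/N) = [Y_{m-1}, G/N]$, which is nontrivial since $Y_{m-1} > Y_m$. Let $L_N \lhd G$ be the preimage of $K(G/N)$, so $N < L_N$. For $\chi \in \irr{G/N}$, Lemma \ref{center kernel} yields $L_N/N \not\le \ker\chi$ iff $Y_{m-1} \not\le Z(\chi)/N$, which by total ordering of the chain of centers occurs exactly when $Z(\chi)/N = Y_m$, i.e.\ $Z(\chi) = Z_N$. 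Hence $\{\chi : Z(\chi) = Z_N\} = \irr{G/N \mid L_N/N}$, as required.

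For $(3) \Rightarrow (2)$: given $L_N$, taking complements in $\irr{G/N}$ rewrites the identity in (3) as $\mathcal{X}(G/N) = \irr{G/L_N}$, and intersecting kernels yields $K(G/N) = L_N/N > 1$. Since every character of $(G/N)/K(G/N)$ then lies in $\mathcal{X}(G/N)$, condition (3) of Lemma \ref{K equiv cond 2} holds, whence $Z_{K(G/N)}(G/N) > Z(G/N)$. Lemma \ref{K equiv conditions} supplies $\bar g \in G/N$ with $[\bar g, G/N] = K(G/N) = L_N/N$; take $g_N \in G$ to be any lift of $\bar g$. Then $[g_N, G] N = L_N$, so for $\chi \in \irr{G/N}$, Lemma \ref{cen cond} produces the chain $g_N \in Z(\chi) \iff [g_N, G] \le \ker\chi \iff L_N \le \ker\chi \iff Z(\chi) \ne Z_N$, which is exactly (2).

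For $(2) \Rightarrow (1)$: by Corollary \ref{nested cor} it suffices to show $K(G/M) > 1$ for every proper $M \lhd G$. If $G' \le M$ then $G/M$ is abelian and $K(G/M) = G/M > 1$. Otherwise, the witness $g_M$ from (2) is noncentral modulo $M$ (forcing $[g_M,G] \not\le M$), and setting $L_M = [g_M,G]M > M$ and running the complement argument from $(3) \Rightarrow (2)$ in reverse yields $\mathcal{X}(G/M) = \irr{G/L_M}$ and hence $K(G/M) = L_M/M > 1$. The main obstacle is the step $(3) \Rightarrow (2)$, which requires chaining Lemmas \ref{K equiv cond 2} and \ref{K equiv conditions} inside the quotient $G/N$ to promote the abstract set-theoretic identity of (3) into an explicit commutator-realizing witness $g_N$, all while carefully tracking subgroups of $G$ against their images in $G/N$.
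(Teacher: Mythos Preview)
Your $(1)\Rightarrow(3)$ and $(3)\Rightarrow(2)$ are both sound. The latter, which chains Lemma~\ref{K equiv cond 2} and Lemma~\ref{K equiv conditions} inside $G/N$ to manufacture an explicit $g_N$ with $[g_N N,G/N]=K(G/N)$, is a route the paper does not take; the paper instead derives both (2) and (3) directly from the nestedness hypothesis (1).

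The genuine gap is in your $(2)\Rightarrow(1)$. You assert that the witness $g_M$ from (2) is noncentral modulo $M$, but (2) only guarantees $g_M\in G\setminus M$. If the set $\{\chi\in\irr{G/M}:Z(\chi)=Z_M\}$ happens to be empty---equivalently, every $\chi\in\irr{G/M}$ has $Z(\chi)>Z_M$, which forces $K(G/M)=1$---then any $g_M\in Z_M\setminus M$ satisfies (2) at $M$ with both sides empty, and your construction $L_M=[g_M,G]M$ degenerates to $M$. This empty case is not hypothetical: the paper itself exhibits (just after Lemma~\ref{K equiv cond 2}) a $2$-group $L$ of order $32$ with $Z(\chi)>Z(L)$ for every $\chi\in\irr L$. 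Since every $2$-group of order at most $16$ is nested, every proper quotient of $L$ is nested, so (2) holds for $L$ at each $N>1$ via $(1)\Rightarrow(2)$ applied in $L/N$, and vacuously at $N=1$; yet $K(L)=1$, so $L$ is not nested. Hence $(2)\Rightarrow(1)$ appears to fail as stated, and your argument cannot be repaired merely by supplying the missing justification.

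For comparison, the paper's own proof runs $(1)\Rightarrow(2)$, $(1)\Rightarrow(3)$, $(3)\Rightarrow(1)$ and then stops; it never establishes $(2)\Rightarrow(1)$ or $(2)\Rightarrow(3)$, and the same empty-set obstruction blocks the obvious passage $L_N=[g_N,G]N$ from (2) to (3). So your cycle and the paper's argument are incomplete at the same point, though your attempt makes the difficulty explicit. A natural repair to the statement of (2) is to require $g_N\in G\setminus Z_N$ rather than $g_N\in G\setminus N$; this forces the relevant set to be nonempty and then your argument goes through verbatim.
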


\begin{proof}
Let $G$ be a nonabelian, nested $p$-group. We now show that (1) implies (2). Hence, we assume that $G$ is a nested group.  For each normal subgroup $N \le G$ with $G' \not\le N$, we see that $G/N$ is also a nonabelian, nested $p$-group.  Set $K_N/N = K(G/N)$ and observe that $K_N/N < G/N$.   Since $G/K_N$ is nontrivial nilpotent, we have $Z_{K_N} > K_N$ and if $G/K_N$ is nonabelian, then we will have $Z_2 (G/N) > Z (G/N)$.  Given a character $\chi \in \irr {G/N}$, we have that $Z(\chi) = Z_N$ if and only if $K_N \nleq \ker(\chi)$ by Lemma~\ref{K equiv conditions} (5). But by Lemma~\ref{K equiv conditions} (2), there exists an element $1 \ne g_N N\in G/N$ for which $[g_N,G]N/N = [g_N N,G/N] = K_N/N$.  Since $K_N \le \ker(\chi)$ if and only if $g_N \in Z(\chi)$, we have (2).  Since $[g_N,G]$ is a normal subgroup of $G$, we also have (3) by taking $L_N = [g_N,G]$. 

To complete the proof, it remains only to show that (3) implies (1).  Now assume (3); we prove that $G$ is nested by induction on $\norm{G}$.  We start by showing that $K(G) > 1$.  Indeed, there exists a nontrivial normal subgroup $L = L_1$ of $G$ so that $Z (\chi) = Z (G)$ if and only if $\chi \in \irr {G \mid L}$.  Thus, $L \le \ker(\chi)$ for every character $\chi \in \irr G$ satisfying $Z(\chi) > Z(G)$. It follows that $L \le K(G)$ (in fact $L = K(G)$), which implies that $K(G) > 1$. Observe that (3) holds for any quotient of $G$, so if $N > 1$ is normal in $G$, then $G/N$ is nested by the inductive hypothesis and so, $K(G/N) > 1$.  It follows from Corollary \ref{nested cor}  that $G$ is also nested.
\end{proof}

If $G$ is nested and nilpotent, then \cite[Corollary 4.10]{ML19gvz} gives that $G = P \times Q$, where $P$ is a nested $p$-group for some prime $p$ and $Q$ is an abelian $p'$-group. In this case, the $K$-series is determined from the $p$-part of $G$.

We note the following consequence of Corollary~4.10 of \cite{ML19gvz} and Lemma~\ref{direct prods}. If $G$ is a nested nilpotent group, then the $K$-series gives a central series for $G'$ with elementary abelian $p$-quotients, for a fixed prime $p$.  This fact was proved by the second author as \cite[Lemma 4.5]{ML19gvz} (using a different method).

\section{Camina triples and Vanishing-off subgroups}\label{vanishing section}

In this section, we introduce the subgroup $U (G)$, and we will see that $U(G)$ serves the same role for nested GVZ-groups as $K(G)$ serves for nested groups. We first define a subgroup $U (G \mid N)$ for every normal subgroup $N \le G$ that, in some sense, determines a set of characters of $G$ vanishing on $G \setminus N$. The subgroup $U (G)$ arises when $N = Z(G)$, and thus, $U(G)$ is related to fully ramified characters. 

The following observation is useful.

\begin{lem} \label{irr sets}
Let $M$ and $N$ be normal subgroups of $G$. Then $M \le N$ if and only if $\irr {G \mid M} \subseteq \irr {G \mid N}$.
\end{lem}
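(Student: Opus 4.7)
The statement is a clean biconditional about the relationship between normal subgroups and the sets $\irr{G \mid -}$ consisting of characters not containing the argument in their kernel. The plan is to prove each implication directly, using only the definition of $\irr{G \mid N}$ and the classical fact that the intersection of the kernels of the irreducible characters of a finite group is trivial.

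For the forward direction, I would assume $M \le N$ and take an arbitrary $\chi \in \irr{G \mid M}$. By definition $M \not\le \ker(\chi)$; since $M \le N$, a kernel containing $N$ would also contain $M$, so $N \not\le \ker(\chi)$, giving $\chi \in \irr{G \mid N}$. This step is essentially immediate from the definition and requires no outside input.

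For the reverse direction, I would argue the contrapositive: suppose $M \not\le N$ and produce a character in $\irr{G \mid M} \setminus \irr{G \mid N}$. Since $M \not\le N$, the subgroup $MN/N$ is a nontrivial normal subgroup of $G/N$. Because $\bigcap_{\psi \in \irr{G/N}} \ker(\psi) = 1$ in $G/N$, there must exist some $\chi \in \irr{G/N}$ with $MN/N \not\le \ker(\chi)$. Viewing $\chi$ as a character of $G$ via inflation, we have $N \le \ker(\chi)$, so $\chi \notin \irr{G \mid N}$. On the other hand, $M \le \ker(\chi)$ would force $MN \le \ker(\chi)$ and hence $MN/N \le \ker(\chi)/N$, contradicting the choice of $\chi$. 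Therefore $\chi \in \irr{G \mid M}$, and the required containment fails.

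There is really no main obstacle here; the only point to be careful about is the direction of the inflation/deflation of characters across $G/N$ and the fact that intersecting kernels of all irreducible characters of a finite group gives the trivial subgroup. Both are standard, so the proof should be short — roughly a paragraph in the paper's style.
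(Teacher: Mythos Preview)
Your proof is correct and uses essentially the same idea as the paper: both directions hinge on the fact that a normal subgroup equals the intersection of the kernels of the irreducible characters lying over it. The only cosmetic difference is that for the reverse implication the paper passes to complements, obtaining $\irr{G/N}\subseteq\irr{G/M}$ and then computing $M=\bigcap_{\chi\in\irr{G/M}}\ker(\chi)\le\bigcap_{\chi\in\irr{G/N}}\ker(\chi)=N$, whereas you argue the contrapositive by exhibiting a witness character; the content is the same.
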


\begin{proof}
It is clear that $\irr {G\mid M} \subseteq \irr {G\mid N}$ when $M \le N$.  Conversely, suppose that $\irr {G\mid M} \subseteq \irr {G\mid N}$.  This implies that $\irr {G/N} \subseteq \irr {G/M}$, and so 
$$
M = \bigcap_{\chi \in \irr {G/M}} \ker (\chi) \le \bigcap_{\chi \in \irr {G/N}} \ker(\chi) = N.
$$
\end{proof}

We begin with a review of vanishing-off subgroups of characters.  Recall that the vanishing-off subgroup $V(\chi)$ of a character is defined by 
$$
V(\chi) = \langle g \in G \mid \chi (g) \ne 0 \rangle.
$$ 
(This subgroup is defined on page 200 of \cite{MI76}.)  In particular, $V (\chi)$ is the smallest subgroup $V$ such that $\chi$ vanishes on $G \setminus V$.   The second author extends this definition to groups in \cite{MLvos09} by defining $V(G)$ to be the subgroup defined by 
$$
V (G) = \langle g \in G \mid \chi (g) \ne 0 {\rm ~for~some~} \chi \in \irr G \rangle.
$$
It is not difficult to see that $V (G)$ is the smallest subgroup of $G$ so that every nonlinear character in $\irr G$ vanishes on $G \setminus V(G)$.  

Let $N$ be a normal subgroup of $G$. Following \cite{NM14}, we define ${V} (G \mid N)$ by
$$
V (G \mid N) = \langle g \in G \mid \chi (g) \ne 0 {\rm ~for~some~} \chi \in \irr {G \mid N} \rangle.
$$ 
Thus, ${V} (G \mid N)$ is the smallest subgroup $V$ of $G$ such that every character $\chi \in \irr {G \mid N}$ vanishes on $G \setminus V$.  Observe that $V(G \mid G') = V(G)$.  Note that if $N = 1$, then this product is empty and we follow the convention above that the empty product yields the trivial subgroup, so $V (G \mid 1) = 1$.  We will need the following properties.

\begin{lem}\label{Vprops}
The following statements hold for every pair $H$ and $N$ of normal subgroups of $G$.
\begin{enumerate}[label={\bf(\arabic*)}]
\item $N \le {V} (G \mid N)$.
\item $V (G \mid N) = \prod_{\chi \in \irr {G \mid N}} V (\chi)$.
\item ${V} (G \mid HN) = {V} (G \mid H) {V} (G \mid N)$.
\item If $N \le H$, then ${V} (G \mid N) \le {V} (G \mid H)$.
\end{enumerate}
\end{lem}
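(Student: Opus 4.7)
The plan is to take (2) as the backbone: once $V(G \mid N) = \prod_{\chi \in \irr {G \mid N}} V(\chi)$ is known, both (3) and (4) reduce to bookkeeping on the index set $\irr {G \mid N}$, and the only item requiring nontrivial character theory is (1), which I view as the main obstacle.

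For (2), I would just unpack the definitions. Any generator $g$ of $V (G \mid N)$ satisfies $\chi(g) \ne 0$ for some $\chi \in \irr {G \mid N}$, so $g \in V(\chi)$; conversely, any generator of such a $V(\chi)$ meets the defining condition for $V (G \mid N)$. Each $V(\chi)$ is $G$-normal since $\chi$ is a class function, so the product on the right is a subgroup and the two sides coincide. For (3), the set identity $\irr {G \mid HN} = \irr {G \mid H} \cup \irr {G \mid N}$ follows from the observation that $HN \le \ker(\chi)$ iff both $H \le \ker(\chi)$ and $N \le \ker(\chi)$; combined with (2), the two sides of (3) are products of the same family of $V(\chi)$'s and thus agree. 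For (4), Lemma \ref{irr sets} gives $\irr {G \mid N} \subseteq \irr {G \mid H}$ from $N \le H$, and (2) immediately yields the containment.

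The hard part is (1). My route is column orthogonality. Fix $1 \ne n \in N$; orthogonality gives $\sum_{\chi \in \irr G} \chi(1) \overline{\chi(n)} = 0$, whereas the subsum over $\irr {G/N}$ equals $|G/N|$ (since each such $\chi$ is trivial on $N$, so $\chi(n) = \chi(1)$, and $\sum_{\chi \in \irr{G/N}} \chi(1)^2 = |G/N|$). Subtracting yields $\sum_{\chi \in \irr {G \mid N}} \chi(1) \overline{\chi(n)} = -|G/N| \ne 0$, so some $\chi \in \irr {G \mid N}$ satisfies $\chi(n) \ne 0$. Hence $n \in V (G \mid N)$, and since $1 \in V(G \mid N)$ trivially, we conclude $N \le V (G \mid N)$.
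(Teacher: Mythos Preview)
Your proof is correct and follows essentially the same route as the paper: (2) is proved by matching generators, (3) via the set identity $\irr{G \mid HN} = \irr{G \mid H} \cup \irr{G \mid N}$, (4) from Lemma~\ref{irr sets} and (2), and (1) by column orthogonality. The only cosmetic difference is in (1): you compare the column of $n$ with the identity column to obtain $\sum_{\chi \in \irr{G \mid N}} \chi(1)\overline{\chi(n)} = -|G/N| \ne 0$, whereas the paper computes $\sum_{\chi} |\chi(n)|^2 = |C_G(n)| = |G/N|$ and reaches the contradiction $|\mathrm{cl}_G(n)| = |N|$ with $\mathrm{cl}_G(n)$ a proper subset of $N$.
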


\begin{proof}
To see (1), suppose that there exists an element $n \in N \setminus {V} (G \mid N)$.  Then $\chi (n) = 0$ for every character $\chi \in \irr {G \mid N}$, and $n$ lies in the kernel of every other irreducible character of $G$.  By column orthogonality (e.g., see Theorem 2.18 of \cite{MI76}), one sees that $\norm {N} = \norm {\mathrm {cl}_G (n)}$, which is strictly less than $\norm {N}$. Thus, no such element $n$ exists.

We next show (2). Observe that $V (\chi) \le V (G \mid N)$ for all characters $\chi \in \irr {G \mid N}$.  It follows that $\prod_{\chi \in \irr {G \mid N}} V(\chi) \le V (G \mid N)$.  Conversely, if $g \in G$ satisfies $\chi (g) \ne 0$ for some character $\chi \in \irr {G \mid N}$, then $g \in V (\chi)$ and so, $g \in \prod_{\chi \in \irr {G \mid N}} V(\chi)$.  It follows that the generators of $V( G \mid N)$ all lie in $\prod_{\chi \in \irr {G \mid N}}$; so, $V (G \mid N) \le \prod_{\chi \in \irr {G \mid N}} V(\chi)$ as desired.

We now show (3). To accomplish this, we show that $\irr {G \mid HN} = \irr {G \mid H} \cup \irr {G \mid N}$.  First, fix a character $\chi \in \irr {G \mid HN}$.  If $\chi \in \irr {G \mid H}$, then we have desired result.  Thus, we may assume that $H \le \ker (\chi)$.  Since $HN \not\le \ker(\chi)$, we cannot have $N$ contained in $\ker (\chi)$, so we must have $\chi \in \irr {G \mid N}$. The reverse containment is obvious. Thus, we have by (2) that ${V} (G \mid HN) = \prod_{\chi \in \irr {G\mid HN}} {V}(\chi)$ and ${V} (G \mid H) {V} (G \mid N) = \prod_{\chi \in \irr {G \mid H}} {V}(\chi) \prod_{\chi \in \irr {G \mid N}} {V}(\chi)$.  The first observation implies that these products are equal.

Finally, (4) is immediate from (2) and the fact that $\irr {G \mid N} \subseteq \irr {G \mid H}$ whenever $N \le H$ by Lemma \ref{irr sets}. 
\end{proof}

For each normal subgroup $N$ of $G$, define ${U} (G \mid N) = \prod_{H \in \mathcal{H}} H$ where $\mathcal {H} = \{ H \lhd G \mid {V} (G \mid H) \le N \}$.  Notice that if $N = G$, then $\mathcal {H}$ will be all normal subgroups of $G$ and so, $U (G \mid G) = G$.  We will show when $N < G$ that $U = {U} (G \mid N)$ is the largest normal subgroup of $G$ for which every member of $\irr {G \mid U}$ vanishes on $G \setminus N$. In particular, if $H \nleq U$, then there exists a character $\chi \in \irr {G \mid H}$ that does not vanish on $G \setminus N$. Therefore, the subgroup ${U} (G \mid N)$ identifies a set of characters that, in some sense, is maximal with respect to vanishing on $G \setminus N$.


\begin{lem}\label{uiff}
Let $H$ and $N$ be normal subgroups of a group $G$. Then $H \le {U} (G \mid N)$ if and only if ${V} (G \mid H) \le N$. 
\end{lem}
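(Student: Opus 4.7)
The statement to prove is an equivalence, so I will prove each direction separately, and the forward direction (from $V(G\mid H) \le N$ to $H \le U(G\mid N)$) is immediate from the definition: if $V(G\mid H) \le N$, then $H$ itself lies in the family $\mathcal{H} = \{H' \lhd G \mid V(G\mid H') \le N\}$, and hence $H$ is one of the factors in the product defining $U(G\mid N)$.

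For the nontrivial direction, the plan is to first establish the key intermediate claim that $U(G\mid N)$ itself belongs to $\mathcal{H}$, i.e., that $V(G\mid U(G\mid N)) \le N$. This shows $U(G\mid N)$ is actually the largest member of $\mathcal{H}$, not merely an upper bound obtained by taking a product. To prove the claim, I would argue by induction on the number of factors: using Lemma \ref{Vprops} (3), if $H_1, H_2 \in \mathcal{H}$ then $V(G\mid H_1 H_2) = V(G\mid H_1)\, V(G\mid H_2) \le N$ since $N$ is a subgroup containing both factors, so $H_1 H_2 \in \mathcal{H}$. Iterating over the finitely many normal subgroups in $\mathcal{H}$ yields $U(G\mid N) \in \mathcal{H}$.

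Once this is in hand, the backward direction is quick: if $H \le U(G\mid N)$, then by Lemma \ref{Vprops} (4) we have $V(G\mid H) \le V(G\mid U(G\mid N)) \le N$, completing the proof.

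The only conceptual step is the intermediate claim that the product $U(G\mid N) = \prod_{H' \in \mathcal{H}} H'$ itself lies in $\mathcal{H}$; after that, everything is a direct application of parts (3) and (4) of Lemma \ref{Vprops}. I do not anticipate any real obstacle, as the multiplicativity of $V(G \mid -)$ across products of normal subgroups makes $\mathcal{H}$ closed under finite products, so its largest element coincides with the product of all its members.
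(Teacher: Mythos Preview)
Your proof is correct and follows essentially the same approach as the paper. The paper's argument is the compressed one-liner $V(G\mid H)\le V(G\mid U(G\mid N))=\prod_{K\in\mathcal{H}}V(G\mid K)\le N$, which is exactly your intermediate claim combined with Lemma~\ref{Vprops} (4); your explicit induction on the number of factors simply unpacks the application of Lemma~\ref{Vprops} (3) to a finite product that the paper leaves implicit.
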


\begin{proof}
If ${V} (G \mid H)\le N$, then it is clear that $H \le {U} (G \mid N)$.  Conversely, if $H \le {U} (G \mid N)$, then ${V} (G \mid H) \le {V} (G \mid {U} (G \mid N)) =  \prod_{K \in \mathcal {V}}  {V} (G \mid K) \le N$ where $\mathcal {V}  = \{ K \lhd G \mid {V} (G \mid K) \le N\}$.
\end{proof}

Lemma~\ref{uiff} implies that the maps $N \mapsto {U} (G \mid N)$ and $N \mapsto {V} (G \mid N)$ give a (monotone) Galois connection from the lattice $\mathrm{Norm} (G)$ of normal subgroups of $G$ to itself. For more information on Galois connections, we refer the reader to \cite{galoisprimer}.

We now present some basic properties of ${U} (G \mid N)$.

\begin{lem}\label{uproperties}
Let $G$ be a nonabelian group.  The following hold. 
\begin{enumerate}[label={\bf(\arabic*)}]
\item For each normal subgroup $N \le G$, the subgroup ${U} (G \mid N)$ is the unique largest subgroup, $U \le G$, such that every character in $\irr {G \mid U}$ vanishes on $G \setminus N$.
\item For each normal subgroup $N \le G$ and every element $g \in G$, we have $g \in {U} (G \mid N)$ if and only if every character $\chi \in\irr G$ satisfying $g \notin \ker(\chi)$ vanishes on $G \setminus N$.
\item For each normal subgroup $N \le G$, we have ${U} (G \mid N) \le N \cap G'$. 
\item If $N$ is characteristic in $G$, so is ${U} (G \mid N)$.
\end{enumerate}
\end{lem}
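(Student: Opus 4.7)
The plan is to deduce all four parts from the Galois-connection identity Lemma~\ref{uiff}, combined with an elementary rephrasing of the vanishing condition. Throughout, I will use the following observation: for any normal subgroup $U\lhd G$, the inclusion $V(G\mid U)\le N$ is equivalent to the statement that every character in $\irr{G\mid U}$ vanishes on $G\setminus N$. This follows from Lemma~\ref{Vprops}(2) together with the definition of $V(\chi)$, since the set of generators of $V(G\mid U)$ is exactly the set of elements on which some character in $\irr{G\mid U}$ is nonzero.

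For part (1), applying Lemma~\ref{uiff} with $H=U(G\mid N)$ gives $V(G\mid U(G\mid N))\le N$, which by the observation above shows that every character in $\irr{G\mid U(G\mid N)}$ vanishes on $G\setminus N$. For maximality, if $U\lhd G$ also has this property, then $V(G\mid U)\le N$, so Lemma~\ref{uiff} yields $U\le U(G\mid N)$. Part (2) then follows immediately: the forward direction is an instance of (1), since $g\in U(G\mid N)$ and $g\notin\ker(\chi)$ together force $\chi\in\irr{G\mid U(G\mid N)}$. For the reverse direction I would pass to the normal closure $H=\langle g^G\rangle$. Because $\ker(\chi)$ is normal, $H\le\ker(\chi)$ iff $g\in\ker(\chi)$, so $\irr{G\mid H}$ is exactly $\{\chi\in\irr G\mid g\notin\ker(\chi)\}$; the hypothesis then gives $V(G\mid H)\le N$, Lemma~\ref{uiff} produces $H\le U(G\mid N)$, and hence $g\in U(G\mid N)$.

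For part (3), the containment $U(G\mid N)\le N$ is immediate from Lemma~\ref{Vprops}(1) combined with the observation above: $U(G\mid N)\le V(G\mid U(G\mid N))\le N$. For $U(G\mid N)\le G'$, I would assume $N<G$ and argue by contradiction: if $U(G\mid N)\not\le G'$, then some linear character $\lambda$ of $G$ satisfies $U(G\mid N)\not\le\ker(\lambda)$, so $\lambda\in\irr{G\mid U(G\mid N)}$; by (1), $\lambda$ would vanish on the nonempty set $G\setminus N$, contradicting the fact that linear characters never vanish. The case $N=G$ (where $U(G\mid G)=G$) is the only real subtlety, since the stated inclusion $G\le G'$ fails unless $G$ is perfect; I expect this is resolved by an implicit restriction to $N<G$.

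Finally, part (4) is an automorphism-tracking argument. For $\sigma\in\mathrm{Aut}(G)$, the natural action $\chi\mapsto\chi\circ\sigma^{-1}$ on $\irr G$ satisfies $\ker(\chi\circ\sigma^{-1})=\sigma(\ker\chi)$ and $V(\chi\circ\sigma^{-1})=\sigma(V(\chi))$, and it sends $\irr{G\mid H}$ bijectively onto $\irr{G\mid\sigma(H)}$. Hence $V(G\mid\sigma(H))=\sigma(V(G\mid H))$ for every normal $H$, so if $\sigma$ fixes $N$, then it permutes the indexing set $\mathcal{H}=\{H\lhd G\mid V(G\mid H)\le N\}$ and therefore fixes $U(G\mid N)=\prod_{H\in\mathcal{H}}H$. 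I do not expect any of these steps to present a serious technical obstacle; the real work in this section was already done in Lemma~\ref{uiff}.
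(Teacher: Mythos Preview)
Your proof is correct and follows essentially the same route as the paper's: Lemma~\ref{uiff} for (1), the normal closure $\langle g\rangle^G$ for (2), Lemma~\ref{Vprops}(1) plus the nonvanishing of linear characters for (3), and the fact that the construction depends only on $N$ for (4). Your detection of the $N=G$ edge case in (3) is accurate; the paper's statement is literally false there (since $U(G\mid G)=G\not\le G'$ for nonperfect $G$), and the paper's proof, like yours, tacitly assumes $N<G$ so that $G\setminus N$ is nonempty.
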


\begin{proof}
If every character in $\irr {G \mid H}$ vanishes on $G \setminus N$, then ${V} (G \mid N) \le H$, so $H \le {U} (G \mid N)$ by Lemma~\ref{uiff}. This establishes (1).

To show (2), first note that for an irreducible character $\chi \in \irr G$, we have $g \in \ker(\chi)$ if and only if $\inner{g}^G \le \ker (\chi)$, where $\inner {g}^G$ denotes the normal closure of $\inner {g}$. Hence,  every character $\chi \in \irr G$ satisfying $g \notin \ker (\chi)$ vanishes on $G \setminus N$ if and only if every character $\chi \in \irr {G \mid \inner {g}^G}$ vanishes on $G \setminus N$. The latter happens if and only if ${V} (G \mid \inner{g}^G) \le N$, which happens if and only if $\inner {g}^G \le {U} (G \mid N)$. Finally, we note since ${U} (G \mid N)$ is normal in $G$ that ${U} (G \mid N)$ contains $g$ if and only if it contains $\inner {g}^G$.

Next, we prove (3). The fact that $U (G \mid N) \le N$ follows from Lemma~\ref{uiff} as $N \le V (G \mid N)$ by Lemma~\ref{Vprops}. The rest of statement (3) follows from the fact that no linear character can vanish on any element of $G$. In particular, this means that $\irr {G \mid U (G \mid N)} \subseteq \irr{G \mid G'}$.

Since $N$ uniquely determines $U (G \mid N)$, it follows that if $N$ is characteristic in $G$, then $U (G \mid N)$ will be characteristic.
\end{proof}

We next consider how the map $N \mapsto {U} (G \mid N)$ interacts with quotients.

\begin{lem}\label{uquotients}
Let $H$ and $N$ be normal subgroups of $G$ that satisfy ${V} (G \mid N) \le H$. Then ${U} (G/N \mid H/N) = {U} (G \mid H)/N$.
\end{lem}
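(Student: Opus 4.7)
The plan is to exploit the Galois connection of Lemma~\ref{uiff}, which says $L \le U(G \mid H)$ if and only if $V(G \mid L) \le H$, and likewise in $G/N$. First I would observe that the hypothesis $V(G \mid N) \le H$ already yields $N \le U(G \mid H)$ via Lemma~\ref{uiff}, so the quotient $U(G \mid H)/N$ makes sense; combined with $N \le V(G \mid N) \le H$, this also makes $H/N$ sensible. The task then reduces to comparing $V(G \mid L)$ with $V(G/N \mid L/N)$ for normal subgroups $L$ of $G$ containing $N$.

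The central step I would carry out is the decomposition
$$V(G \mid L) = W \cdot V(G \mid N),$$
where $W$ denotes the preimage in $G$ of $V(G/N \mid L/N)$. The reason is that, since $N \le L$, every character $\chi \in \irr{G \mid L}$ falls into exactly one of two classes: either $N \le \ker(\chi)$, in which case $\chi$ is the inflation of some character in $\irr{G/N \mid L/N}$ and $V(\chi)$ is the preimage of the corresponding vanishing-off subgroup of $G/N$; or $N \not\le \ker(\chi)$, in which case $\chi \in \irr{G \mid N}$ (and by Lemma~\ref{irr sets}, every such character does lie in $\irr{G \mid L}$ since $N \le L$). Applying Lemma~\ref{Vprops}~(2) to each class and multiplying the pieces gives the displayed identity.

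With this decomposition in hand, both containments follow quickly from Lemma~\ref{uiff}. For $U(G \mid H)/N \le U(G/N \mid H/N)$, I would take $L = U(G \mid H)$; since $V(G \mid L) \le H$, the factor $W$ lies in $H$, hence $V(G/N \mid L/N) = W/N \le H/N$, and so $L/N \le U(G/N \mid H/N)$. For the reverse containment, I would let $L$ be the preimage in $G$ of $U(G/N \mid H/N)$; by Lemma~\ref{uiff} applied in $G/N$ we get $W \le H$, and the hypothesis $V(G \mid N) \le H$ absorbs the second factor, yielding $V(G \mid L) \le H$ and therefore $L \le U(G \mid H)$.

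The main obstacle I anticipate is the bookkeeping behind the partition of $\irr{G \mid L}$ and the identification of $V(\chi)$ for an inflated character as the preimage of its vanishing-off subgroup in $G/N$; once these are pinned down carefully, the rest is mechanical. The hypothesis $V(G \mid N) \le H$ plays a genuinely essential role, both to make the quotient $U(G \mid H)/N$ well-defined and to absorb the $V(G \mid N)$ factor in the reverse containment.
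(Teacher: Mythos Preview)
Your proof is correct and rests on exactly the same key identity as the paper's: for a normal subgroup $L$ of $G$ containing $N$, one has $V(G \mid L) = W \cdot V(G \mid N)$ where $W/N = V(G/N \mid L/N)$. The paper packages this by comparing the two families $\{K \lhd G : V(G \mid K) \le H\}$ and $\{\overline{K} \lhd \overline{G} : V(\overline{G} \mid \overline{K}) \le \overline{H}\}$ and then taking products, while you apply Lemma~\ref{uiff} directly to the two particular subgroups $U(G \mid H)$ and the preimage of $U(G/N \mid H/N)$; these are simply two presentations of the same argument.
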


\begin{proof}
Let $x \mapsto \overline{x}$ denote the canonical surjection $G \to G/N$. Define the sets 
\[\mathcal {C} = \{K \lhd G \mid {V} (G \mid K) \le H\}\ \,\text{and}\ \,\mathcal{D} = \{\overline {K} \lhd \overline {G} \mid {V} (\overline {G} \mid \overline {K}) \le \overline {H}\}.\] 
We claim that $\mathcal {D} = \overline {\mathcal {C}}$. However, we first show that $\mathcal {D} = \overline {\mathcal {C}'}$, where
\[\mathcal{C}' = \{K \lhd G \mid N \le K\ \,\text{and}\ \,{V} (G \mid K) \le H\}.\] 
To that end, let $K \lhd G$ satisfy $N \le K$. Then $N \le {V} (G \mid K)$, and so we have
\[{V} (\overline {G} \mid \overline{K}) {V} (G \mid N)/N = {V} (G \mid K)/N.\]
So it follows that $\overline{K} \in \mathcal{D}$ if and only if $K \in \mathcal{C}'$, as claimed. 

In particular, this gives
\[{U} (\overline{G} \mid \overline{H}) = \prod_{K \in \mathcal{D}} K = \prod_{K \in \mathcal{C}'} \overline {K} = \overline {\prod_{K \in \mathcal{C}'} K}.\]

Next note that since ${V} (G \mid N) \le H$, we have $K \in \mathcal{C}$ if and only if $KN \in \mathcal{C}'$. Hence
\[{U} (G \mid H) = \prod_{K \in \mathcal{C}} K = \prod_{K \in \mathcal{C}} KN = \prod_{K \in \mathcal{C}'} K.\]
The result now follows by taking quotients.
\end{proof}

We see that the subgroups $V (G \mid N)$ and $U (G \mid N)$ are both related to zeros of irreducible characters.  This next result was proved in \cite{SBML} and instead gives insight on which characters a specific element of $G$ vanishes.

\begin{lem} \label{basics}
Let $M$ be a normal subgroup of $G$ and let $g \in G \setminus M$.  Then the following are equivalent: 
\begin{enumerate}[label={\bf(\arabic*)}]
\item $g$ is conjugate to every element in $gM$. 
\item For every element $z \in M$, there exists an element $x \in G$ so that $[g,x] = z$. 
\item $|C_G (g)| = |C_{G/M} (gM)|$. 
\item $\chi (g) = 0$ for all $\chi \in \mathrm{Irr}(G \mid M)$. 
\end{enumerate}
\end{lem}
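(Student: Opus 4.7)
The plan is to establish the four equivalences by proving (1)$\Leftrightarrow$(2), (1)$\Leftrightarrow$(3), and (1)$\Leftrightarrow$(4) separately; the first is essentially a definition unpacking, the second is a counting argument, and the third rests on the second orthogonality relations.

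For (1)$\Leftrightarrow$(2), I would just unpack definitions. The element $g$ is conjugate to $gm$ (for $m \in M$) precisely when there is $x \in G$ with $x^{-1}gx = gm$, i.e., $[g,x] = g^{-1}x^{-1}gx = m$. So the statement that $g$ is conjugate to every element of $gM$ is literally the statement that every $z \in M$ has the form $[g,x]$ for some $x$, which is (2).

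For (1)$\Leftrightarrow$(3), I would argue by counting. Since $M$ is normal, conjugating $gM$ by any $y \in G$ gives $(ygy^{-1})M$, so if $gM$ is contained in the $G$-class of $g$, then so is $(ygy^{-1})M$ for every $y$; hence the $G$-class of $g$ is a union of $M$-cosets, and in fact coincides with the full preimage in $G$ of the $G/M$-class of $gM$, which has size $|M|\,[G/M:C_{G/M}(gM)]$. Comparing with $[G:C_G(g)]$ and dividing out $|G|$ converts (1) into the equality $|C_G(g)| = |C_{G/M}(gM)|$. The converse reverses the cardinality argument: since the $G$-class of $g$ always sits inside that preimage, equality of sizes forces equality of sets, and in particular $gM$ itself is contained in the $G$-class of $g$.

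For (1)$\Leftrightarrow$(4), the main tool is the identity
$$
\frac{1}{|M|}\sum_{m\in M}\chi(gm)=\begin{cases}\chi(g)&\text{if }M\le\ker(\chi),\\ 0&\text{if }M\not\le\ker(\chi),\end{cases}
$$
valid for every $\chi\in\irr G$. I would prove this by observing that $x\mapsto\frac{1}{|M|}\sum_{m\in M}\chi(xm)$ is a class function on $G$ constant on $M$-cosets, so it expands as $\sum_{\eta\in\irr{G/M}}\langle\chi,\eta\rangle_G\,\eta$; a direct substitution $y=xm$ shows $\langle\chi,\eta\rangle_G$ is the usual inner product, which by orthogonality leaves only the $\chi=\eta$ term when $M\le\ker(\chi)$, and nothing otherwise. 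Assuming (1), every $\chi(gm)$ equals $\chi(g)$, so averaging and combining with the identity forces $\chi(g)=0$ for $\chi\in\irr{G\mid M}$, giving (4). Conversely, assuming (4), second orthogonality gives
$$
\sum_{\chi\in\irr G}\chi(g)\overline{\chi(gm)}=\sum_{\chi\in\irr{G/M}}\chi(g)\overline{\chi(g)}=\sum_{\chi\in\irr G}|\chi(g)|^2=|C_G(g)|,
$$
where the middle equality uses $\chi(g)=0$ for $\chi\in\irr{G\mid M}$ and the fact that $\chi(gm)=\chi(g)$ when $M\le\ker(\chi)$. This value $|C_G(g)|$ is exactly the second orthogonality criterion for $g$ and $gm$ to be conjugate, establishing (1).

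The main obstacle is the implication (4)$\Rightarrow$(1): one must convert a statement about vanishing of certain characters at $g$ into a conjugacy statement about every element of $gM$. The averaging identity above is what bridges that gap, and once it is in hand the remaining manipulations are routine applications of the two orthogonality relations.
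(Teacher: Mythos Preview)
Your argument is correct. The equivalence (1)$\Leftrightarrow$(2) is indeed just unwinding the definition of the commutator, and your counting argument for (1)$\Leftrightarrow$(3) is clean: the conjugacy class of $g$ always sits inside the preimage of the class of $gM$, and comparing cardinalities gives exactly the centralizer equality. For (1)$\Leftrightarrow$(4), both directions are sound; the averaging identity you prove can be obtained a bit more directly by noting that $\frac{1}{|M|}\sum_{m\in M}\rho(m)$ is the projection onto the $M$-fixed subspace of a representation $\rho$ affording $\chi$, and that this subspace is a $G$-subrepresentation (hence $0$ when $M\not\le\ker\chi$), but your class-function expansion works equally well.

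As for comparison with the paper: the paper does not actually prove this lemma. It is stated with the remark ``This next result was proved in \cite{SBML}'' and no argument is given in the present text. So there is nothing to compare your approach against here; your proof stands on its own and is a standard treatment of this well-known equivalence (versions of which appear throughout the Camina-pair literature). If anything, your write-up is more detailed than what one typically finds, since most references content themselves with the second-orthogonality manipulation for (4)$\Rightarrow$(1) and leave the rest to the reader.
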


If every element $g \in G \setminus M$ satisfies the equivalent conditions of Lemma~\ref{basics}, we call $(G,M)$ a {\it Camina pair}.  These objects were first considered by A. Camina in \cite{acamina}, as a natural generalization of Frobenius groups. If there is a normal subgroup $N$ of $G$ containing $M$ so that every element $g \in G\setminus N$ satisfies the equivalent conditions of Lemma~\ref{basics}, we call $(G,N,M)$ a {\it Camina triple}. These objects were first studied by Mattarei in his Ph.D. thesis \cite{SM92}. Many more properties of Camina triples were found by Mlaiki in \cite{NM14}, where the following result appears.

\begin{lem}[{\normalfont \cite[Theorem 2.1]{NM14}}] \label{equiv}
Let $M$ and $N$ be normal subgroups of $G$.  Then the following are equivalent: 
\begin{enumerate}[label={\bf(\arabic*)}]
\item $(G,N,M)$ is a Camina triple. 
\item For every element $g \in G \setminus N$ and every element $z \in M$, there exists an element $x \in G$ so that $[g,x] = z$. 
\item $|C_G (g)| = |C_{G/M} (gM)|$ for all $g \in G \setminus N$. 
\item $V (G \mid M) \le N$. 
\item $\chi (g) = 0$ for every element $g \in G \setminus N$ and for every character $\chi \in \irr {G \mid M}$. 
\end{enumerate}
\end{lem}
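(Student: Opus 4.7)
The plan is to reduce almost all of the equivalences to a direct, elementwise application of Lemma \ref{basics}. By the definition of Camina triple given just before the statement, $(G,N,M)$ is a Camina triple precisely when $M \le N$ and every element $g \in G \setminus N$ satisfies the four equivalent conditions of Lemma \ref{basics}. Since $M \le N$ implies $G \setminus N \subseteq G \setminus M$, Lemma \ref{basics} applies to every $g \in G \setminus N$.

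With this in mind, observe that condition (2) above is the universal closure over $g \in G \setminus N$ of condition (2) of Lemma \ref{basics}, condition (3) above is the universal closure of condition (3) of Lemma \ref{basics}, and condition (5) above is the universal closure of condition (4) of Lemma \ref{basics}. Therefore (1) $\Leftrightarrow$ (2), (1) $\Leftrightarrow$ (3), and (1) $\Leftrightarrow$ (5) all follow immediately by applying Lemma \ref{basics} to each individual element $g \in G \setminus N$.

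It then remains only to establish (4) $\Leftrightarrow$ (5), which I would extract directly from the definition $V(G \mid M) = \langle g \in G \mid \chi(g) \neq 0 \text{ for some } \chi \in \irr{G \mid M}\rangle$. For the direction (5) $\Rightarrow$ (4): every generator of $V(G \mid M)$ lies in $N$ by (5), so $V(G \mid M) \le N$. For the reverse direction, assume $V(G \mid M) \le N$ and take any $g \in G \setminus N$; then $g \notin V(G \mid M)$, so $g$ cannot be one of the defining generators, which forces $\chi(g) = 0$ for every $\chi \in \irr{G \mid M}$.

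There is no substantive obstacle here — the real content of the result was already packaged into Lemma \ref{basics} (and into the definition of the vanishing-off subgroup). The only point requiring a small amount of care is the containment $M \le N$ built into the Camina triple definition, which is needed to ensure Lemma \ref{basics} is applicable to every $g \in G \setminus N$; once this is noted, the proof is essentially a matter of unwinding definitions.
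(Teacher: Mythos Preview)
The paper does not supply its own proof of this lemma; it is simply quoted from \cite{NM14}. Your outline---reduce everything to the elementwise Lemma~\ref{basics}, then read $(4)\Leftrightarrow(5)$ off the definition of $V(G\mid M)$---is the natural argument and is correct in substance.

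There is one loose end. For the implications $(2)\Rightarrow(1)$, $(3)\Rightarrow(1)$, $(5)\Rightarrow(1)$ you appeal to Lemma~\ref{basics}, but that lemma is stated only for $g\in G\setminus M$, while your hypotheses put $g\in G\setminus N$. You rightly note that the containment $M\le N$ is part of the \emph{definition} of Camina triple and hence part of what must be proved, but you do not actually derive $M\le N$ from (2), (3), or (5). The patch is short: from (4) one has $M\le V(G\mid M)\le N$ by Lemma~\ref{Vprops}(1), and your equivalence $(4)\Leftrightarrow(5)$ then handles (5). For (2) and (3), observe that any $g\in M\setminus\{1\}$ violates both conditions (in (2), taking $z=g^{-1}$ forces $g^x=1$; in (3), $gM$ is the identity coset so the equality would give $\lvert\mathrm{cl}_G(g)\rvert=\lvert M\rvert$, impossible since $\mathrm{cl}_G(g)\subseteq M\setminus\{1\}$). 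Hence $M\setminus N=\varnothing$. With $M\le N$ in hand, Lemma~\ref{basics} applies to every $g\in G\setminus N$ and your argument goes through verbatim.
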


Since, by Lemma~\ref{uiff}, we have that $V (G \mid M) \le N$ if and only if $M \le U (G \mid N)$, Lemma~\ref{equiv} yields the following result.

\begin{lem}\label{U camina}
Let $M$ and $N$ be normal subgroups of a group $G$.  Then $(G,N,M)$ is a Camina triple if and only if $M \le {U} (G \mid N)$. In particular, $(G,N)$ is a Camina pair if and only if $N = {U} (G \mid N)$.
\end{lem}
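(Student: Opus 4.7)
The plan is to deduce this essentially directly from Lemma \ref{equiv} and Lemma \ref{uiff}, which together chain the definitions. First, I would invoke Lemma \ref{equiv} to replace the Camina triple condition $(G,N,M)$ with the equivalent condition $V(G \mid M) \le N$. Then I would invoke Lemma \ref{uiff}, which states $V(G \mid M) \le N$ if and only if $M \le U(G \mid N)$. Composing these two equivalences yields the first statement of the lemma.

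For the second statement, I would note that a Camina pair $(G,N)$ is precisely the Camina triple $(G,N,N)$, so applying the first part with $M = N$ gives $N \le U(G \mid N)$. The reverse inclusion $U(G \mid N) \le N$ is always true by Lemma \ref{uproperties}(3), so equality holds. Conversely, if $N = U(G \mid N)$, then trivially $N \le U(G \mid N)$, so $(G,N,N)$ is a Camina triple by the first part, which means $(G,N)$ is a Camina pair.

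Since both of the cited results do all the real work, there is no serious obstacle here; the lemma is essentially a bookkeeping observation that repackages Mlaiki's characterization of Camina triples in terms of the Galois-type correspondence $N \mapsto U(G \mid N)$ and $M \mapsto V(G \mid M)$. The only thing to be careful about is the trivial edge case where $N = G$, in which $U(G \mid G) = G$ and every element of $G \setminus G = \emptyset$ vacuously satisfies the Camina condition, so the equivalence holds without issue.
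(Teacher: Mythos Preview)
Your proposal is correct and matches the paper's approach exactly: the paper derives the lemma directly from Lemma~\ref{equiv} and Lemma~\ref{uiff}, just as you do. Your treatment of the Camina pair case via Lemma~\ref{uproperties}(3) is a correct elaboration of a detail the paper leaves implicit.
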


In the next section, we study the subgroup $U (G) = U (G \mid Z(G))$. It turns out that this subgroup is closely related to the subgroup $K(G)$ defined earlier. The next result gives an alternate description of $U (G \mid N)$ that, once specialized to the case $N = Z(G)$, foreshadows the connections between $K(G)$ and $U(G)$.

\begin{lem}\label{Ukernels}
Let $N < G$ be a normal subgroup.  Then $U (G \mid N) = \bigcap\limits_{\chi \in \mathcal {U}} \ker(\chi)$ where $\mathcal {U} = \{ \chi \in \irr G \mid V (\chi) \nleq N\}$. 
\end{lem}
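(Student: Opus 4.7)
The plan is to prove the equality by mutual containment. Set $W = \bigcap_{\chi \in \mathcal{U}} \ker(\chi)$.

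First, I would establish $U(G \mid N) \le W$. The key input is Lemma \ref{uproperties}(1), which characterizes $U(G \mid N)$ as the unique largest normal subgroup $U$ of $G$ for which every $\chi \in \irr{G \mid U}$ satisfies $V(\chi) \le N$. Taking the contrapositive, if $V(\chi) \not\le N$ (that is, $\chi \in \mathcal{U}$), then $\chi \notin \irr{G \mid U(G \mid N)}$, which forces $U(G \mid N) \le \ker(\chi)$. Intersecting over all $\chi \in \mathcal{U}$ yields $U(G \mid N) \le W$.

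For the reverse containment $W \le U(G \mid N)$, by Lemma \ref{uiff} it suffices to prove $V(G \mid W) \le N$. To do this, I would observe that every $\chi \in \irr{G \mid W}$ has $W \not\le \ker(\chi)$, whereas $W \le \ker(\psi)$ holds for every $\psi \in \mathcal{U}$ by the very definition of $W$; hence any such $\chi$ lies outside $\mathcal{U}$, so $V(\chi) \le N$. Lemma \ref{Vprops}(2) then gives $V(G \mid W) = \prod_{\chi \in \irr{G \mid W}} V(\chi) \le N$, and a second application of Lemma \ref{uiff} returns $W \le U(G \mid N)$, completing the proof.

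There is no serious obstacle here: the lemma amounts to a dictionary translation between the largest-subgroup description of $U(G \mid N)$ supplied by Lemma \ref{uproperties}(1) and the intersection-of-kernels description obtained by singling out the ``bad'' characters $\mathcal{U}$. The only subtlety worth flagging is the degenerate case where $\irr{G \mid W}$ is empty, which occurs precisely when $W = 1$; there $V(G \mid W) = 1 \le N$ holds by the empty-product convention already adopted for $V(G \mid 1)$, and the argument still goes through.
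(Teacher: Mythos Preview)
Your proof is correct and follows essentially the same approach as the paper: both argue by mutual containment, showing $U(G\mid N)\le W$ via the contrapositive of the defining property of $U(G\mid N)$, and $W\le U(G\mid N)$ by observing that every $\chi\in\irr{G\mid W}$ must satisfy $V(\chi)\le N$. Your version is slightly more explicit in citing Lemma~\ref{Vprops}(2) and Lemma~\ref{uiff} for the second inclusion, whereas the paper invokes Lemma~\ref{uproperties}(1) directly, but the underlying argument is identical.
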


\begin{proof}
Write $U = U (G \mid N)$ and let $W = \bigcap_{\chi \in \mathcal {U}} \ker(\chi)$.  Note that $V (G \mid U) \le N$, and so $V (\chi) \le N$ for all characters $\chi \in \irr {G \mid U}$.  This implies that $U \le \ker(\chi)$ for every character $\chi \in \mathcal {U}$.  We deduce that $U \le W$. Conversely, consider a character $\chi \in \irr {G \mid W}$.  Since $W \nleq \ker(\chi)$, we must have $V (\chi) \le N$.  In particular, $\chi$ vanishes on $G \setminus N$.  It follows that $W \le U$, and hence also that $W = U$.
\end{proof}

\section{The subgroup $U (G)$}\label{U section}

Define ${U} (G) = {U} (G \mid {Z} (G))$.  When $G$ is an abelian group, we have $Z(G) = G$ and so $U (G) = G$.  We will study $U (G)$ when $G$ is nonabelian.  Notice that Theorem \ref{intro U prop} follows from Lemma \ref{uproperties} in this case.

In this section, we will study the properties of the subgroup $U (G)$, many of which are shared by the subgroup $K(G)$. We will use these subgroups to tailor the results of $K(G)$ regarding nested groups specifically to nested GVZ-groups. Before doing this, we show that this subgroup is useful in describing existing properties of groups related to GVZ-groups. 

A group $G$ is called a {\it VZ-group} if every nonlinear irreducible character vanishes on $G \setminus Z (G)$. These groups were studied in Fern\'{a}ndez-Alcober and Moret\'{o} in \cite{VZ01} and also by the second author in \cite{MLIGT08}.  
We note that both nested groups and GVZ-groups generalize VZ-groups. 

\begin{lem}
Let $G$ be a group.  The following are equivalent: 
\begin{enumerate}[label={\bf(\arabic*)}]
\item $G$ is a VZ-group. 
\item ${Z} (G) = {V} (G)$. 
\item ${U} (G) = G'$. 
\end{enumerate}
\end{lem}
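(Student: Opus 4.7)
The plan is to establish $(1) \Leftrightarrow (2) \Leftrightarrow (3)$ by combining the universal property of $V(G)$ with the Galois connection between the $V$ and $U$ operators afforded by Lemma~\ref{uiff}. Throughout I assume $G$ is nonabelian; if $G$ were abelian, every character would be linear, so $V(G) = V(G \mid G') = V(G \mid 1) = 1 \ne G = Z(G)$, which already breaks the equivalence of (1) and (2) in that degenerate case, and (3) fails as well since $G' = 1$ while $U(G) = G$.

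For $(1) \Leftrightarrow (2)$: Since $\irr{G \mid G'}$ is precisely the set of nonlinear irreducible characters of $G$, we have $V(G) = V(G \mid G')$, and by its universal property $V(G)$ is the smallest subgroup off which every nonlinear $\chi \in \irr G$ vanishes. Thus (1) asserts exactly that $V(G) \le Z(G)$. To upgrade this inclusion to the equality in (2), I will prove the reverse containment $Z(G) \le V(G)$ for any nonabelian $G$: fix any nonlinear $\chi \in \irr G$ and, for each $z \in Z(G) \le Z(\chi)$, observe $|\chi(z)| = \chi(1) > 0$, so $z \in V(\chi) \le V(G)$.

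For $(2) \Leftrightarrow (3)$: Lemma~\ref{uiff} applied with $H = G'$ and $N = Z(G)$ yields the key biconditional
\[ G' \le U(G) \iff V(G) = V(G \mid G') \le Z(G). \]
On the other hand, Lemma~\ref{uproperties} (3) gives $U(G) \le Z(G) \cap G' \le G'$ with no extra hypothesis, so $U(G) = G'$ is equivalent to $G' \le U(G)$. Chaining the two steps,
\[ U(G) = G' \iff V(G) \le Z(G), \]
and invoking the reverse containment $Z(G) \le V(G)$ established above, the right-hand side is in turn equivalent to the equality $V(G) = Z(G)$. This closes the loop.

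The proof is essentially bookkeeping: the real content is packaged into the Galois connection of Lemma~\ref{uiff} and the unconditional bound $U(G \mid N) \le N \cap G'$ of Lemma~\ref{uproperties}~(3). The only ingredient not already recorded is the reverse containment $Z(G) \le V(G)$, which rests on the classical identity $|\chi(z)| = \chi(1)$ for $z \in Z(\chi)$ and is the single place where the existence of a nonlinear irreducible character (i.e.\ nonabelianness of $G$) is needed.
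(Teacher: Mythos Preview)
Your proof is correct and takes essentially the same approach as the paper. The paper's one-sentence argument simply asserts that both (2) and (3) are equivalent to the condition that every nonlinear $\chi \in \irr G$ vanishes off $Z(G)$; you unpack this into explicit applications of Lemma~\ref{uiff} and Lemma~\ref{uproperties}~(3), together with the containment $Z(G)\le V(G)$ (which the paper records separately as Lemma~\ref{properties}). Your observation about the nontrivial abelian case---where (1) holds vacuously but (2) and (3) fail---is a point the paper silently passes over.
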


\begin{proof}
Both (2) and (3) are equivalent to the condition that every nonlinear character $\chi \in \irr G$ vanishes off of $Z(G)$, which is equivalent to $G$ being a VZ-group.
\end{proof}

An intimately related concept is a semi-extraspecial group. A $p$-group $G$ is called {\it extraspecial} if $G' = Z(G)$ is the socle of $G$, and $G$ is called {\it semi-extraspecial} if $G/N$ is extraspecial for every maximal subgroup $N$ of $Z(G)$.  In particular, if $G$ is a VZ-group such that $Z (G) = G'$, then $G$ is a semi-extraspecial group.  

We showed in Lemma~\ref{Kprops} that $K(G) \le G'$, and we showed in Lemma~\ref{Z_2 > Z} that $K(G)\le Z(G)$ when $Z_2>Z(G)$. Our first result about $U (G)$ shows that the same is true of $U (G)$.  We actually show a bit more so that we may deduce a different description of semi-extraspecial groups. 

\begin{lem} \label{properties}
Let $G$ be a nonabelian group.  Then ${U} (G) \le G' \cap {Z} (G) \le G' {Z} (G) \le {V}(G)$.
\end{lem}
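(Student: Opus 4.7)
The plan is to verify the three inclusions $U(G) \le G' \cap Z(G)$, $G'\cap Z(G) \le G' Z(G)$, and $G' Z(G) \le V(G)$ separately. The middle one is trivial, so the substance is in the outer two.

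For the first inclusion, I would simply specialize Lemma~\ref{uproperties}(3) to $N = Z(G)$: that lemma already gives $U(G \mid N) \le N \cap G'$, so setting $N = Z(G)$ yields $U(G) = U(G \mid Z(G)) \le Z(G) \cap G'$ immediately.

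For the third inclusion, I would use the identification $V(G) = V(G \mid G')$ noted in the previous section and handle the two factors of $G'Z(G)$ separately. To see that $G' \le V(G)$, I would cite Lemma~\ref{Vprops}(1) with $N = G'$, which gives $G' \le V(G \mid G') = V(G)$. To see that $Z(G) \le V(G)$, I would fix $z \in Z(G)$ and observe that since $G$ is nonabelian there exists at least one character $\chi \in \irr{G \mid G'}$, i.e. a nonlinear irreducible character. For any such $\chi$, the standard fact that central elements act as scalars gives $|\chi(z)| = \chi(1) \ne 0$, so $z \in V(\chi)$. Then by Lemma~\ref{Vprops}(2), $V(\chi) \le \prod_{\psi \in \irr{G \mid G'}} V(\psi) = V(G \mid G') = V(G)$, forcing $z \in V(G)$. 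Combining gives $G' Z(G) \le V(G)$.

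I don't anticipate any real obstacle here; every step is a one-line application of a lemma already in the paper. The only point requiring mild care is the use of the hypothesis that $G$ is nonabelian, which is what guarantees the existence of a nonlinear irreducible character needed in the $Z(G) \le V(G)$ step. Without this hypothesis the statement $Z(G) \le V(G)$ would fail trivially, since $V(G) = V(G \mid G') = V(G \mid 1) = 1$ when $G$ is abelian.
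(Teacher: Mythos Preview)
Your proposal is correct and follows essentially the same approach as the paper. The paper invokes Lemma~\ref{uproperties} for the first inclusion (you correctly cite part (3); the paper's citation of part (2) appears to be a typo), and for the last inclusion the paper argues directly that $Z(G)\le V(\chi)$ for every $\chi\in\irr G$, leaving the containment $G'\le V(G)$ implicit, whereas you spell out both factors via Lemma~\ref{Vprops}; the underlying arguments are the same.
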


\begin{proof}
We have $U (G) \le G' \cap Z (G)$ by Lemma~\ref{uproperties} (2). Since $Z(G) \le V (\chi)$ for all characters $\chi \in \irr G$, we have $Z (G) \le V (G)$, and so, also $G' Z(G) \le V (G)$.
\end{proof}

\begin{lem}
If $G$ is a nontrivial group, then ${U} (G) = {V} (G)$ if and only if $G$ is a semi-extraspecial group.
\end{lem}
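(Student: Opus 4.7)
The plan is to exploit the chain of containments $U(G) \le G' \cap Z(G) \le G'Z(G) \le V(G)$ established in Lemma~\ref{properties}, together with the VZ-group characterization given in the unlabeled lemma immediately preceding it (namely: $G$ is VZ iff $Z(G) = V(G)$ iff $U(G) = G'$) and the remark that a VZ-group with $G' = Z(G)$ is semi-extraspecial.

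For the direction where $G$ is semi-extraspecial, I would first argue that $G$ is a VZ-group with $G' = Z(G)$. The equality $G' = Z(G)$ is standard for semi-extraspecial $p$-groups (each quotient $G/N$ with $N$ maximal in $Z(G)$ is extraspecial, hence $(G/N)' = Z(G/N) = Z(G)/N$, and intersecting over all such $N$ gives $G' = Z(G)$). To see the VZ property, note that every nonlinear $\chi \in \irr G$ is inflated from a faithful character of some $G/N$ with $N$ maximal in $Z(G)$; such a faithful irreducible character of an extraspecial group vanishes off its center, so $\chi$ vanishes off $Z(G)$. By the preceding lemma, this yields both $V(G) = Z(G)$ and $U(G) = G'$, and since $G' = Z(G)$ we conclude $U(G) = V(G)$.

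Conversely, assume $U(G) = V(G)$. The chain in Lemma~\ref{properties} must then collapse to
\[
U(G) \;=\; G' \cap Z(G) \;=\; G'Z(G) \;=\; V(G).
\]
The middle equality $G' \cap Z(G) = G'Z(G)$ forces $G' \le Z(G)$ and $Z(G) \le G'$, hence $G' = Z(G)$. The equalities $U(G) = G'$ and $V(G) = Z(G)$ then give, by the preceding lemma, that $G$ is a VZ-group. Since additionally $G' = Z(G)$, the remark following that lemma lets us conclude $G$ is semi-extraspecial.

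The main obstacle, such as it is, lies in the implication "semi-extraspecial $\Rightarrow$ VZ with $G' = Z(G)$"; once that is in hand, Lemma~\ref{properties} and the VZ characterization do all the work and the rest is bookkeeping with the collapse of the four-term chain.
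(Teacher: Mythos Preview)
Your argument is correct and structurally parallel to the paper's: both directions hinge on collapsing the chain $U(G)\le G'\cap Z(G)\le G'Z(G)\le V(G)$ from Lemma~\ref{properties}. One small omission: Lemma~\ref{properties} assumes $G$ is nonabelian, so before invoking it you should observe (as the paper does) that $U(G)=V(G)$ already rules out the abelian case, since for nontrivial abelian $G$ one has $U(G)=G$ while $V(G)=V(G\mid G')=V(G\mid 1)=1$.

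The one genuine divergence is the final step of the implication $U(G)=V(G)\Rightarrow$ semi-extraspecial. After the chain collapse gives $G'=Z(G)$, you appeal to the VZ-characterization lemma and then to the remark in the text that a VZ-group with $G'=Z(G)$ is semi-extraspecial. The paper instead reads $U(G)=Z(G)$ as saying $(G,Z(G))$ is a Camina pair, concludes $G$ is a $p$-group of class~$2$ with $G'=Z(G)$ (hence a Camina group of class~$2$), and cites Verardi's theorem \cite{LVses} that such groups are precisely the semi-extraspecial ones. Your route is a touch shorter because it leans on a fact the paper has already asserted; the paper's route is more self-contained about \emph{why} $G$ is a $p$-group and anchors the conclusion in the Camina-pair literature rather than in an unproved remark.
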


\begin{proof}
Assume ${U} (G) = {V} (G)$.  Then $G$ is not abelian, since if it were, we would have $U (G) = G > 1 = V (G)$. In light of Lemma \ref{properties}, we have ${U} (G) = {Z} (G) = G' = {V} (G)$. This means that $(G, {Z} (G))$ is a Camina pair, so $G$ is a $p$-group for some prime $p$, obviously of nilpotence class $2$.  Since ${Z} (G) = G'$, we see that $G$ is in fact a Camina group.  It is known that being a Camina group of nilpotence class $2$ is equivalent to being semi-extraspecial (e.g. see \cite[Theorem 1.2]{LVses}).

Suppose $G$ is a semi-extraspecial group; then $G' = {Z} (G)$ and every nonlinear irreducible character of $G$ vanishes on $G \setminus {Z} (G)$.  Then ${V} (G) \le {Z} (G)$ and $G' \le {U} (G)$. Since the reverse containments always hold, we have ${U}(G) = G' = {Z}(G) = {V}(G)$.
\end{proof}

We next find several descriptions of $U (G)$, the last of which was alluded to at the end of Section~\ref{vanishing section}. We must however introduce some new notation. For each element $g \in G \setminus Z(G)$, we let $D_G (g)/Z(G) = C_{G/Z(G)} (gZ(G))$; i.e. $D_G (g) = \{x \in G \mid [g,x] \in Z(G)\}$.  

\begin{lem}\label{UDg}
Let $G$ be a group. The following hold.
\begin{enumerate}[label={\bf(\arabic*)}]
\item ${U} (G) = \bigcap\limits_{g \in G \setminus {Z} (G)} [g,D_G(g)]$.
\item $U (G)$ is the largest normal subgroup of $G$ contained in $\bigcap\limits_{g \in G \setminus Z(G)} \gamma_G (g)$.
\item $U (G) = \bigcap\limits_{\chi \in \mathcal{V}} \ker(\chi)$ where $\mathcal {V} = \{\chi \in \irr G \mid V(\chi) > Z(G) \}$.
\end{enumerate}
\end{lem}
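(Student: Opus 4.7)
The proof splits naturally into three parts, which I would attack in the order (3), (1), (2). Part (3) is essentially immediate from the machinery already developed. Applying Lemma~\ref{Ukernels} with $N = Z(G)$ gives $U(G) = \bigcap_{\chi \in \mathcal{U}} \ker(\chi)$, where $\mathcal{U} = \{\chi \in \irr G \mid V(\chi) \not\le Z(G)\}$. The only thing to check is that $\mathcal{U} = \mathcal{V}$, which follows from the standard fact that $|\chi(z)| = \chi(1) \ne 0$ for each $z \in Z(G)$ and $\chi \in \irr G$; consequently $Z(G) \le V(\chi)$ always, and $V(\chi) \not\le Z(G)$ is the same as $V(\chi) > Z(G)$.

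For (1), the key preliminary observation is that $\gamma_G(g) \cap Z(G) = [g,D_G(g)]$ for every $g \in G \setminus Z(G)$. Indeed, when $[g,x] \in Z(G)$, the central commutator identities $[g,xy] = [g,x][g,y]$ and $[g,x^{-1}] = [g,x]^{-1}$ hold, so the set $\{[g,x] \mid x \in D_G(g)\}$ is already closed under products and inverses and therefore equals the subgroup it generates. Containment of $\gamma_G(g)\cap Z(G)$ in this set is tautological, and the reverse containment follows from the definition of $D_G(g)$.

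With this identification in hand, the two inclusions of (1) fall out. For the forward inclusion, Lemma~\ref{properties} gives $U(G) \le Z(G)$, and then Lemmas~\ref{U camina} and \ref{equiv} say that for every $g \in G \setminus Z(G)$ and every $z \in U(G)$ there is an $x \in G$ with $[g,x] = z$; since $z \in Z(G)$, such an $x$ lies in $D_G(g)$, giving $U(G) \le [g,D_G(g)]$. For the reverse inclusion, set $L = \bigcap_{g \in G \setminus Z(G)} [g,D_G(g)]$. Normality of $L$ in $G$ follows from the identity $D_G(g)^h = D_G(g^h)$, which shows that conjugation by any $h \in G$ permutes the subgroups $[g,D_G(g)]$ as $g$ ranges over $G \setminus Z(G)$. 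Clearly $L \le Z(G)$, and for each $g \in G \setminus Z(G)$ and $z \in L \subseteq \gamma_G(g)$ we have $z = [g,x]$ for some $x \in G$. By Lemma~\ref{equiv}, $(G, Z(G), L)$ is a Camina triple, so Lemma~\ref{U camina} yields $L \le U(G)$.

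Part (2) is then the same argument with the centrality bookkeeping stripped out: the forward direction is exactly the statement that $U(G) \subseteq \gamma_G(g)$ for every $g \in G \setminus Z(G)$, which is the Camina triple condition applied to $(G,Z(G),U(G))$; and in the reverse direction, any normal subgroup $M$ of $G$ contained in $\bigcap_{g \in G \setminus Z(G)} \gamma_G(g)$ satisfies condition~(2) of Lemma~\ref{equiv}, hence $(G,Z(G),M)$ is a Camina triple and $M \le U(G)$ by Lemma~\ref{U camina}. I expect the only genuinely delicate step to be the identity $[g,D_G(g)] = \gamma_G(g) \cap Z(G)$ used in (1); once this is recorded, everything else is a direct translation between the Camina triple language supplied by Lemmas~\ref{U camina} and \ref{equiv}, the vanishing-off description of Lemma~\ref{Ukernels}, and elementary commutator calculus.
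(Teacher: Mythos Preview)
Your proof is correct. Parts (2) and (3) match the paper's approach essentially verbatim, including the observation that $\mathcal{U}=\mathcal{V}$ because $Z(G)\le V(\chi)$ always.

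The genuine difference is in part (1), specifically in the direction $D\le U(G)$ where $D=\bigcap_{g\in G\setminus Z(G)}[g,D_G(g)]$. The paper proves this by a character-theoretic argument: for any $N\le D$ and any $\chi\in\irr{G\mid N}$, it shows $[g,D_G(g)]\nleq\ker(\lambda)$ for all $g\in G\setminus Z(G)$ (where $\lambda$ is the central constituent of $\chi$), and then invokes an external result (\cite[Lemma~3.3]{SBML}) to conclude that $\chi$ is fully ramified over $Z(G)$, hence $N\le U(G)$. You instead first record the identity $[g,D_G(g)]=\gamma_G(g)\cap Z(G)$ and then verify condition (2) of Lemma~\ref{equiv} directly for $(G,Z(G),D)$, staying entirely within the Camina-triple machinery already developed in the paper. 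Your route is more self-contained and arguably cleaner; the paper's route makes the ``fully ramified'' conclusion explicit, at the cost of an outside citation. One minor remark: your conjugation argument for the normality of $L$ is correct but unnecessary, since your own preliminary identity already gives $[g,D_G(g)]\le Z(G)$, so each term (and hence the intersection) is central and therefore normal.
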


\begin{proof}
We first note that if $G$ is abelian, then each of these intersections is $G$, as is $U (G)$. We therefore assume that $G$ is nonabelian. 
	
We begin by proving (1). Set $Z = {Z} (G)$, $U = {U} (G)$, and $D = \bigcap_{g \in G \setminus Z} [g,D_G(g)]$.  Suppose first that $N \le [g,D_G(g)]$ for all $g \in G \setminus Z$.  Let $\chi \in \irr {G \mid N}$.  Take $\lambda$ to be the unique irreducible constituent of $\chi_Z$ and set $\nu = \lambda_N$.  Observe that $\nu \ne \mathbbm{1}_N$.  We see that $N \le [g,D_G(g)]$ for every element $g \in G \setminus Z$, that $\ker(\mu) < N$, and that $\ker(\mu) = \ker (\lambda) \cap N$. If there exists an element $g \in G \setminus Z$ so that $[g, D_G (g)] \le \ker(\lambda)$, then $N = [g, D_G (g)] \cap N \le \ker(\lambda) \cap N = \ker(\mu) < N$ which is a contradiction.  Thus, we have $[g, D_G (g)] \nleq \ker(\lambda)$ for all $g \in G \setminus Z$. By Lemma 3.3 of \cite{SBML}, $\lambda$ (and hence $\chi$) is fully ramified with respect to $G/Z$.  Since $U$ is the unique largest subgroup of $G$ so that all characters in $\irr {G \mid U}$ vanish on $G \setminus Z$, we have $N \le U.$  Taking $N = D$, we obtain $D \le U$. If $U = 1$, then we must have $D = 1$ also. So, we may assume that $U > 1$. Let $g \in G \setminus Z$. Since $(G,Z,U)$ is a Camina triple, we have $gU \subseteq \mathrm{cl}_G(g)$. In particular, for each element $u\in U$, there exists an element $x_u \in G$ so that $[g, x_u]=u$. Since $x_u \in D_G(g)$, we have $U \le [g, D_G (g)]$. It follows that $U \le D$.
	
Now, we prove (2). Write $S = \bigcap_{g \in G \setminus Z(G)} \gamma_G(g)$. Let $H$ denote the largest normal subgroup of $G$ contained in $S$. Note that $U(G) \subseteq \gamma_G (g)$ for every $g\in G\setminus Z(G)$, and so $U(G)\le H$. Suppose that $N$ is a normal subgroup of $G$ contained in $S$. Then $gN \subseteq \mathrm {cl}_G (g)$ for every element $g\in G \setminus Z(G)$. By Lemma~\ref{basics}, this means that every character $\chi \in \irr{G \mid N}$ vanishes off of $Z(G)$, so $N \le U(G)$. It follows that $H\le U(G)$.  Finally, statement (3) is just Lemma~\ref{Ukernels} with $N = Z(G)$.
\end{proof}

\begin{lem}\label{U le K} 
Let $G$ be a group. Then ${U} (G) \le K (G)$.  
\end{lem}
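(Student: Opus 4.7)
The plan is to compare the two subgroups through their descriptions as intersections of kernels. We have $K(G) = \bigcap_{\chi \in \mathcal{X}} \ker(\chi)$ where $\mathcal{X} = \{\chi \in \irr G \mid Z(\chi) > Z(G)\}$, and by Lemma~\ref{UDg}(3) we have $U(G) = \bigcap_{\chi \in \mathcal{V}} \ker(\chi)$ where $\mathcal{V} = \{\chi \in \irr G \mid V(\chi) > Z(G)\}$. So to show $U(G) \le K(G)$ it suffices to show the reverse inclusion on the index sets, namely $\mathcal{X} \subseteq \mathcal{V}$.

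First I would dispense with the abelian case: if $G$ is abelian then both $U(G) = G$ and $K(G) = G$ by convention, so the inclusion is trivial. So I assume $G$ is nonabelian.

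Next, the key observation is that $Z(\chi) \le V(\chi)$ for every $\chi \in \irr G$. This is immediate from the definition of $V(\chi)$: every element $g \in Z(\chi)$ satisfies $|\chi(g)| = \chi(1) \ne 0$, so $g$ is one of the generators of $V(\chi)$. Therefore, if $\chi \in \mathcal{X}$, i.e.\ $Z(\chi) > Z(G)$, then $V(\chi) \ge Z(\chi) > Z(G)$, so $\chi \in \mathcal{V}$. This gives $\mathcal{X} \subseteq \mathcal{V}$, and hence
\[
U(G) = \bigcap_{\chi \in \mathcal{V}} \ker(\chi) \le \bigcap_{\chi \in \mathcal{X}} \ker(\chi) = K(G),
\]
as desired.

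There is no real obstacle here: the proof is essentially a one-line comparison of index sets, the only content being the elementary fact $Z(\chi) \le V(\chi)$. The only thing to be careful about is to invoke Lemma~\ref{UDg}(3) (or equivalently Lemma~\ref{Ukernels} with $N = Z(G)$) to rewrite $U(G)$ as an intersection of kernels, since the definition of $U(G)$ is in terms of vanishing-off subgroups and not kernels.
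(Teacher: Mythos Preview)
Your proof is correct and takes essentially the same approach as the paper: the paper handles the abelian case and then says the inclusion ``follows immediately from any of the statements in Lemma~\ref{UDg},'' without further detail. Your argument via part~(3) of that lemma together with the elementary inclusion $Z(\chi)\le V(\chi)$ is exactly the kind of one-line verification the paper has in mind.
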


\begin{proof}
The result is trivially true if $G$ is abelian, so assume that this is not the case. The fact that $U (G) \le K(G)$ follows immediately from any of the statements in Lemma~\ref{UDg}.  

\end{proof}

\begin{cor}\label{U eq K}
If $Z_{U(G)} > Z(G)$, then $U (G) = K (G)$.
\end{cor}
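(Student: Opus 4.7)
My plan is straightforward: Lemma~\ref{U le K} already provides one containment $U(G) \le K(G)$, so I only need to derive the reverse containment $K(G) \le U(G)$ from the hypothesis $Z_{U(G)} > Z(G)$. The key observation is that the hypothesis supplies an element outside $Z(G)$ all of whose commutators with $G$ land in $U(G)$, and this is exactly the kind of element that appears in the description of $K(G)$ given by Lemma~\ref{K intersection}.

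In detail, I would first dispose of the abelian case trivially: when $G$ is abelian, $U(G) = K(G) = G$ by definition (and moreover the hypothesis is vacuous in that case). Assuming $G$ is nonabelian, the hypothesis $Z_{U(G)} > Z(G)$ produces an element $g \in Z_{U(G)} \setminus Z(G)$. Unwinding the definition $Z_{U(G)}/U(G) = Z(G/U(G))$ shows that $[g,G] \le U(G)$. Now I apply Lemma~\ref{K intersection}, which for nonabelian $G$ gives $K(G) = \bigcap_{h \in G \setminus Z(G)} [h,G]$. Taking $h = g$ in this intersection immediately yields
\[
K(G) \le [g,G] \le U(G),
\]
and combining with $U(G) \le K(G)$ from Lemma~\ref{U le K} gives the desired equality.

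There is no real obstacle here; the corollary is essentially a tautological consequence of Lemma~\ref{K intersection} once one notices that a witness to $Z_{U(G)} > Z(G)$ is precisely the kind of element that forces $K(G)$ into $U(G)$. The only point requiring a moment's care is that Lemma~\ref{K intersection} is stated under the nonabelian hypothesis, which is automatic from $Z_{U(G)} > Z(G)$.
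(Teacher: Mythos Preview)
Your proof is correct and follows the same overall strategy as the paper: obtain $U(G)\le K(G)$ from Lemma~\ref{U le K}, then derive $K(G)\le U(G)$ from the hypothesis $Z_{U(G)}>Z(G)$. The only difference is in the lemma invoked for the second containment. The paper argues by contrapositive via Lemma~\ref{Kprops}(2): if $K(G)\nleq U(G)$, then $Z(G/U(G))=Z(G)/U(G)$, contradicting the hypothesis. You instead pick a witness $g\in Z_{U(G)}\setminus Z(G)$ and apply the commutator description $K(G)=\bigcap_{h\in G\setminus Z(G)}[h,G]$ from Lemma~\ref{K intersection} to get $K(G)\le[g,G]\le U(G)$ directly. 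Both routes are equally short; yours is arguably a touch more concrete, while the paper's leans on a result already packaged for exactly this purpose.
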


\begin{proof}
By Lemma~\ref{U le K}, we know that $U(G) \le K(G)$. If $K(G) \nleq U(G)$, then $Z(G/U(G)) = Z(G)/U(G)$ by Lemma~\ref{Kprops} (2).
\end{proof}

Combining Lemma \ref{U le K} with Lemma \ref{direct prods}, we obtain the following:

\begin{lem}
Suppose $M$ and $N$ are groups. Then the following hold:
\begin{enumerate}[label={\bf(\arabic*)}]
\item If $M$ and $N$ are nonabelian, then $U (M \times N) = 1$.
\item If $M$ is nonabelian and $N$ is abelian, then $U (M \times N) = U (M)$.
\end{enumerate}
\end{lem}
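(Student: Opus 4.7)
The plan is to treat the two parts separately, following the hint that the result combines Lemma~\ref{U le K} with Lemma~\ref{direct prods}.

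For part (1), the proof is immediate: by Lemma~\ref{U le K}, we have $U(M \times N) \le K(M \times N)$, and by Lemma~\ref{direct prods}~(1), $K(M \times N) = 1$ when both $M$ and $N$ are nonabelian. Hence $U(M \times N) = 1$.

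For part (2), first I would use Lemma~\ref{U le K} together with Lemma~\ref{direct prods}~(2) to localize: writing $G = M \times N$, we get $U(G) \le K(G) = K(M) \le M$, which identifies $U(G)$ as a normal subgroup of $G$ lying inside the $M$-factor. This is the step that exploits the given hint. The rest is a translation between characters of $G$ and characters of $M$ via the abelian factor $N$, using the vanishing characterization of $U(\,\cdot\,)$ from Lemma~\ref{uproperties}~(1), noting that $Z(G) = Z(M) \times N$ and hence $G \setminus Z(G) = (M \setminus Z(M)) \times N$.

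To show $U(G) \le U(M)$: given $\psi \in \irr{M \mid U(G)}$, the character $\psi \times 1_N$ belongs to $\irr{G \mid U(G)}$ since $U(G) \le M$ implies $U(G) \le \ker(\psi \times 1_N)$ if and only if $U(G) \le \ker(\psi)$. By Lemma~\ref{uproperties}~(1), $\psi \times 1_N$ vanishes on $G \setminus Z(G)$, which forces $\psi$ to vanish on $M \setminus Z(M)$. Applying Lemma~\ref{uproperties}~(1) in $M$ yields $U(G) \le U(M)$. For the reverse containment, given $\chi = \psi \times \lambda \in \irr{G \mid U(M)}$ with $\psi \in \irr M$, $\lambda \in \irr N$, the containment $U(M) \le M$ forces $\psi \in \irr{M \mid U(M)}$; hence $\psi$ vanishes on $M \setminus Z(M)$, and so $\chi$ vanishes on $G \setminus Z(G)$. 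Another application of Lemma~\ref{uproperties}~(1) gives $U(M) \le U(G)$.

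I do not anticipate any serious obstacle: the main care needed is keeping track of the identification $M \leftrightarrow M \times 1$ and of the fact that $N$ being abelian means every $\lambda \in \irr N$ is nonvanishing, so that $(\psi \times \lambda)(m,n) = 0$ on $G \setminus Z(G)$ is equivalent to $\psi$ vanishing on $M \setminus Z(M)$. Alternatively, a one-line verification is available by applying Lemma~\ref{UDg}~(1) directly: for $g = (m,n) \in G \setminus Z(G)$, one checks $D_G(g) = D_M(m) \times N$ and $[g, D_G(g)] = [m, D_M(m)] \times 1$, whence $U(G) = \bigcap_{m \in M \setminus Z(M)} [m, D_M(m)] = U(M)$. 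I would likely present the combination-based argument in the text and remark on this alternative.
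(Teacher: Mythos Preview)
Your proof is correct. Part~(1) is identical to the paper's argument.

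For part~(2), you take a genuinely different route. The paper works entirely with the commutator-set description of $U$ from Lemma~\ref{UDg}~(2): it observes that for $g=(m,n)\in G\setminus Z(G)$ one has $\gamma_G(m,n)=\gamma_M(m)$ (since $N$ is abelian), hence $\bigcap_{g\in G\setminus Z(G)}\gamma_G(g)=\bigcap_{m\in M\setminus Z(M)}\gamma_M(m)$, and then reads off $U(G)=U(M)$ directly. In particular, the paper never uses the localization $U(G)\le K(G)=K(M)\le M$ for part~(2). Your argument instead uses that localization to make sense of $\irr{M\mid U(G)}$, and then runs a character-theoretic comparison via Lemma~\ref{uproperties}~(1) together with the factorization $\irr{G}=\irr{M}\times\irr{N}$ and the nonvanishing of linear characters of $N$. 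This is perfectly valid and has the pedagogical advantage of making explicit why the abelian factor is harmless on the character side; the paper's approach is shorter and stays on the group-theoretic side. Your alternative via Lemma~\ref{UDg}~(1) (computing $D_G(g)=D_M(m)\times N$ and $[g,D_G(g)]=[m,D_M(m)]$) is closest in spirit to the paper's proof, differing only in which clause of Lemma~\ref{UDg} is invoked.
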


\begin{proof}
By Lemma \ref{U le K}, we have $U (M \times N) \le K (M \times N)$.  Applying Lemma \ref{direct prods} (1), we have $K(M \times N) = 1$, and it follows that $U (M \times N) = 1$ when $M$ and $N$ are nonabelian.  When $M$ is nonabelian, and $N$ is abelian, we take $G = M \times N$.  Observe that $\gamma_G (m,n) = \gamma_M (m)$ for all $m \in M$ and $n \in N$.  It follows that $\cap_{(m,n) \in G \setminus Z(G)} \gamma_G (m,n) = \cap_{m \in M \setminus Z(M)} \gamma_M (m)$.  Conclusion (2) now follows from Lemma \ref{UDg} (2). 
\end{proof}

We will now show that a group $G$ satisfying ${U}(G) > 1$ is essentially a $p$-group in the sense that it is a $p$-group up to a central direct factor. To do so, we will appeal to the connection to Camina triples mentioned earlier. The following result can be found in \cite{NM14}.

\begin{lem}\label{NM2}{\normalfont(cf. \cite[Theorem 2.10]{NM14})}\hspace{\labelsep}
Let $(G, N, M)$ be a Camina triple. If $G/N$ is not a $p$-group for any prime $p$, then $M \cap {Z}(G) = 1$.
\end{lem}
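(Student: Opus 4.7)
The plan is to establish the contrapositive: if $M \cap Z(G) \ne 1$, then $G/N$ is a $p$-group for some prime $p$. First I would pick a nontrivial $z_0 \in M \cap Z(G)$, let $p$ be a prime dividing $|z_0|$, and set $z = z_0^{|z_0|/p}$, so that $z$ has order exactly $p$ and still lies in $M \cap Z(G)$.

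The engine of the proof is an elementary commutator identity exploiting the centrality of $z$. If $g, x \in G$ satisfy $[g, x] = z$, then $g^x = g z$, and because $z$ commutes with $g$ we have $(g^k)^x = (g z)^k = g^k z^k$ for every integer $k$, whence $[g^k, x] = z^k$.

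Now assume for contradiction that $G/N$ is not a $p$-group; choose a prime $q \ne p$ dividing $|G/N|$ and an element $gN \in G/N$ of order $q$. The order $r$ of any lift $g \in G$ is divisible by $q$, so I would write $r = q^a s$ with $\gcd(s, q) = 1$ and replace $g$ by $g^s$; this new $g$ has order $q^a$ in $G$, while its image in $G/N$ still has order $q$ since $\gcd(s,q) = 1$. Thus $g \in G \setminus N$ and $|g|$ is coprime to $p$.

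By the Camina triple property (condition (2) of Lemma \ref{equiv}), there exists $x \in G$ with $[g, x] = z$. Applying the identity from step two with $k = |g| = q^a$ yields $z^{q^a} = [g^{q^a}, x] = [1, x] = 1$, but $\gcd(q^a, p) = 1$ together with $|z| = p$ forces $z^{q^a} \ne 1$, which is the desired contradiction. The only step requiring real care is the construction of an element of $G \setminus N$ whose order in $G$ itself (not merely in $G/N$) is coprime to $p$; passing to the $s$-th power handles this cleanly, after which the centrality of $z$ makes the commutator arithmetic automatic.
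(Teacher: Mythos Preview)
Your argument is correct. Note, however, that the paper does not supply its own proof of this lemma; it simply cites \cite[Theorem~2.10]{NM14}. So there is nothing to compare against in the present paper, and your write-up in fact fills in a proof the authors chose to omit.

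For the record, the chain of reasoning is sound at every step. The reduction to an element $z \in M \cap Z(G)$ of prime order $p$ is harmless since $M \cap Z(G)$ is a subgroup. The commutator identity $[g^k,x] = z^k$ genuinely requires $z$ central (so that $(gz)^k = g^k z^k$), and you use exactly that. The one place a careless writer might stumble---producing an element of $G \setminus N$ whose order \emph{in $G$} is a $q$-power---you handle correctly: starting from $gN$ of order $q$, the lift $g$ has order $r = q^a s$ with $\gcd(q,s)=1$, and $g^s$ has order exactly $q^a$ while $(gN)^s$ still has order $q$, so $g^s \notin N$. Then $[g^{q^a},x] = z^{q^a} = 1$ contradicts $\gcd(q^a,p)=1$ and $|z|=p$.
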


Observe that if ${U}(G) > 1$, then Lemma~\ref{U camina} tells us that $(G,{Z}(G),{U}(G))$ is a Camina triple. It turns out that the structure of $G$ is quite restrictive in this case.

\begin{lem}\label{nontrivueqpq}
Let $G$ be a nonabelian group that satisfies ${U}(G) > 1$.  Then the following are true:
\begin{enumerate}[label={\bf(\arabic*)}]
\item $(G,Z(G),U(G))$ is a Camina triple.
\item $G = P \times Q$, where $p$ is a $p$-group for some prime $p$ and $Q$ is an abelian $p'$-group. In particular, $G$ is nilpotent.
\item $U (G)$ is an elementary abelian $p$-group for some prime $p$.
\end{enumerate} 
\end{lem}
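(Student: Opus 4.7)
The three statements form a cascade, so I would tackle them in order. Part (1) will be a one-line consequence of Lemma \ref{U camina}: since $U(G) = U(G \mid Z(G))$, we trivially have $U(G) \le U(G \mid Z(G))$, and Lemma \ref{U camina} translates this into the assertion that $(G, Z(G), U(G))$ is a Camina triple.

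For part (2), my plan is to apply the contrapositive of Lemma \ref{NM2} to this Camina triple. Lemma \ref{properties} gives $U(G) \le Z(G)$, and by hypothesis $U(G) > 1$, so $U(G) \cap Z(G) = U(G) \ne 1$; Lemma \ref{NM2} then forces $G/Z(G)$ to be a $p$-group for some prime $p$. To extract the direct-factor decomposition I would let $P$ be a Sylow $p$-subgroup of $G$ and let $Q$ be the (central) Hall $p'$-subgroup of $Z(G)$. Since $|G/Z(G)|$ is a power of $p$, we have $G = P\,Z(G)$, and since the $p$-part of $Z(G)$ is a central normal $p$-subgroup it must lie inside $P$, giving $G = PQ$. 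Coprime orders yield $P \cap Q = 1$, and centrality of $Q$ upgrades the product to $G = P \times Q$; nilpotence is then automatic.

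For part (3), I would exploit the nilpotence just established. Because $G$ is nonabelian and nilpotent, $Z_2 > Z(G)$, so Lemma \ref{Z_2 > Z} (1) says $K(G)$ is an elementary abelian $q$-group for some prime $q$. By Lemma \ref{U le K} we have $U(G) \le K(G)$, so $U(G)$ inherits the property of being elementary abelian of exponent $q$. To verify that $q$ agrees with the prime $p$ from part (2), I would invoke Lemma \ref{Z_2 > Z} (2)---which applies since $K(G) \ge U(G) > 1$---to obtain that $Z_2/Z(G)$ is a $q$-group; since $Z_2/Z(G)$ is also a subgroup of the $p$-group $G/Z(G)$, this forces $q = p$.

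There is no genuine obstacle here; the entire lemma is a bookkeeping exercise chaining together the Camina-triple characterization (Lemma \ref{U camina}), the $p$-group conclusion for Camina triples (Lemma \ref{NM2}), and the structural results on $K(G)$ (Lemma \ref{Z_2 > Z}) via the inclusion $U(G) \le K(G)$ (Lemma \ref{U le K}). The only places requiring any care are the standard Sylow argument for the decomposition $G = P \times Q$ in part (2) and the matching of primes between parts (2) and (3).
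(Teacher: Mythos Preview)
Your proposal is correct and follows essentially the same route as the paper: part (1) via Lemma~\ref{U camina}, part (2) via Lemma~\ref{NM2} applied to that Camina triple together with the standard Sylow decomposition, and part (3) via $U(G)\le K(G)$ and Lemma~\ref{Z_2 > Z}. Your write-up is in fact slightly more careful than the paper's, which leaves the Sylow argument implicit and does not explicitly reconcile the prime in (3) with the prime in (2).
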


\begin{proof}
Since ${V} (G \mid {U} (G)) \le {Z}(G)$, we have that $(G, {Z}(G), {U}(G))$ is a Camina triple by Lemma~\ref{U camina}. By Lemma~\ref{NM2}, $G/{Z}(G)$ must be a $p$-group since $1 < {U} (G) \le {Z} (G)$. Thus, if $Q$ is a complement for a Sylow $p$-subgroup of ${Z}(G)$, then $Q$ is direct factor of $G$.  This proves (1).  To prove (2), we have that $U (G) \le K (G)$ by Lemma \ref{U le K}.  Since $G$ is nonabelian and nilpotent, we have $Z_2 > Z(G)$.  Thus, we may apply Lemma \ref{Z_2 > Z} to see that $U (G)$ is an elementary abelian $p$-group.
\end{proof}

In \cite{MLvos09}, the second author shows that if $G$ is nonabelian, nilpotent and satisfies ${V} (G) < G$, then $G = P\times Q$, where $P$ is a $p$-group and $Q$ is an abelian $p'$-group. In particular, Lemma~\ref{nontrivueqpq} may also be considered an analog of that result.

We now continue to study the subgroup $U (G)$. The next result shows that two more properties from Lemma~\ref{Kprops} satisfied by $K(G)$ are shared by $U (G)$.  We will present examples to see that $U (G) > 1$, but $Z_{U(G)} = Z(G)$ does occur.

\begin{lem}\label{centerproperty}
Let $N \lhd G$, and assume that $U = {U} (G) \nleq N$. Then the following hold:  
\begin{enumerate}[label={\bf(\arabic*)}]
\item $N \le Z(G)$.
\item ${Z} (G/N) = {Z} (G)/N$.
\item $UN/N \le {U} (G/N)$. In particular, ${U} (G/N)$ is nontrivial. 
\item If $Z_U = Z (G)$, then $U (G/U) = 1$.
\end{enumerate}
\end{lem}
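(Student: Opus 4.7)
The plan is to leverage, for parts (1) and (2), the inclusion $U(G) \le K(G)$ from Lemma \ref{U le K} together with the corresponding statements about $K(G)$ in Theorem \ref{Kprops}; and to exploit the Galois-connection framework encoded in Lemmas \ref{uiff} and \ref{uquotients} for parts (3) and (4).

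For parts (1) and (2), since $U \le K(G)$ by Lemma \ref{U le K}, the hypothesis $U \not\le N$ forces $K(G) \not\le N$. Parts (1) and (2) of Theorem \ref{Kprops} then yield $N \le Z(G)$ and $Z(G/N) = Z(G)/N$, respectively.

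For part (3), by Lemma \ref{uiff} applied to $G/N$, it suffices to show $V(G/N \mid UN/N) \le Z(G/N)$. Given $\chi \in \irr{G/N \mid UN/N}$, viewing $\chi$ as a character of $G$ gives $U \not\le \ker(\chi)$, hence $\chi \in \irr{G \mid U}$; by Lemma \ref{uproperties}(1), $\chi$ vanishes on $G \setminus Z(G)$. Combined with part (2), this says $\chi$ vanishes on $(G/N) \setminus Z(G/N)$, giving the containment. The nontriviality clause is immediate since $U \not\le N$ gives $UN/N > 1$.

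For part (4), the hypothesis $Z_U = Z(G)$ gives $Z(G/U) = Z(G)/U$, so $U(G/U) = U(G/U \mid Z(G)/U)$. Since $U = U(G \mid Z(G))$, Lemma \ref{uiff} gives $V(G \mid U) \le Z(G)$, so Lemma \ref{uquotients} applies with the quotient subgroup being $U$ and the target being $Z(G)$, producing $U(G/U \mid Z(G)/U) = U(G \mid Z(G))/U = 1$. The only genuine subtlety lies in part (3), where vanishing on $G \setminus Z(G)$ must be traced carefully through the quotient; the other parts are direct deductions from the framework already developed.
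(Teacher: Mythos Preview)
Your argument is correct. Parts (1) and (2) match the paper's proof exactly. For parts (3) and (4), however, you take a genuinely different route than the paper.

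For (3), the paper instructs the reader to mimic the proof of Theorem~\ref{Kprops}(3), replacing the subgroups $[g,G]$ with $[g,D_G(g)]$ and invoking the formula $U(G)=\bigcap_{g\in G\setminus Z(G)}[g,D_G(g)]$ from Lemma~\ref{UDg}(1). You instead work directly with characters: identifying $\irr{G/N\mid UN/N}$ inside $\irr{G\mid U}$, using that every such character vanishes off $Z(G)$, and then passing to $G/N$ via part (2). Your approach avoids the element-by-element bookkeeping of the $[g,D_G(g)]$ argument and is arguably cleaner, at the cost of relying on the equivalence in Lemma~\ref{uiff}.

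For (4), the paper again parallels Theorem~\ref{Kprops}(4), using the description $U(G)=\bigcap_{\chi\in\mathcal V}\ker(\chi)$ from Lemma~\ref{UDg}(3) and showing the relevant sets $\mathcal V$ coincide for $G$ and $G/U$. You instead exploit the Galois-connection machinery: the hypothesis $Z_U=Z(G)$ lets you write $U(G/U)=U(G/U\mid Z(G)/U)$, and Lemma~\ref{uquotients} (whose hypothesis $V(G\mid U)\le Z(G)$ is exactly Lemma~\ref{uiff}) then collapses this to $U(G\mid Z(G))/U=1$. This is shorter and makes essential use of Lemma~\ref{uquotients}, which the paper's own proof of (4) does not invoke.
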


\begin{proof}
By Lemma \ref{U le K}, we have $U  \le K (G)$.  Thus, if $U \not\le N$, then $K (G) \not\le N$.  Now, (1) and (2) follow immediately from Lemma \ref{Kprops} (1) and (2).  The proof of (3) follows the same lines of the proof Lemma \ref{Kprops} (3) where we substitute $[g, D_G (g)]$ for $[g,G]$.  In view of Lemma \ref{UDg} (4), we can prove (4) along the same lines as the proof of Lemma \ref{Kprops} (4) where the property $Z(\chi) > Z(G)$ is replaced by $V(\chi) > Z(G)$.
\end{proof}

It therefore suffices for our considerations to consider only $p$-groups when discussing groups $G$ satisfying $U(G) > 1$. We saw earlier that some nice properties hold whenever $Z_{K(G)} > Z(G)$. We now show that similar properties hold whenever $Z_{U(G)} > Z(G)$. In fact, we obtain a slightly stronger result than Lemma~\ref{min breadth} for $p$-groups satisfying $Z_{U(G)} > Z(G)$.  

The following result will be useful; it shows that even more properties of $U(G)$ are mirrored by properties of $K(G)$.  

\begin{lem}\label{equivconjs}
Let $G$ be a nonabelian $p$-group satisfying $U = {U}(G) > 1$.  The following are equivalent: 
\begin{enumerate}[label={\bf(\arabic*)}]
\item ${Z}_U > {Z}(G)$. 
\item There exists an element $g \in G$ satisfying $[g,G] = U$. 
\item $U = [Z_U,G]$. 
\end{enumerate}
\end{lem}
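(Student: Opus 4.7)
The plan is to mimic the proof of Lemma~\ref{K equiv conditions} and to exploit the fact that $U(G) = K(G)$ under condition (1), which lets us recycle the corresponding result for $K(G)$ rather than redo everything from scratch.

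First I would establish $(1) \Rightarrow (2)$. Assume $Z_U > Z(G)$. By Corollary~\ref{U eq K} this forces $U(G) = K(G)$, so in particular $Z_K = Z_U > Z(G)$. Since $G$ is a nonabelian $p$-group, $Z_2 > Z(G)$, so the hypotheses of Lemma~\ref{K equiv conditions} hold; that lemma then produces an element $g \in G$ with $[g,G] = K(G) = U$.

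Next I would handle $(2) \Rightarrow (3)$. Suppose $g \in G$ satisfies $[g,G] = U$. Then $[g,G] \le U$ means, by the very definition of $Z_U$, that $g \in Z_U$. Hence
\[
U = [g,G] \le [Z_U,G] \le U,
\]
where the last inclusion uses that $[Z_U,G] \le U$ by the definition of $Z_U$. Therefore $U = [Z_U,G]$.

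Finally, for $(3) \Rightarrow (1)$: if $U = [Z_U,G]$ and $U > 1$, then $[Z_U,G] > 1$, so $Z_U$ cannot be contained in $Z(G)$, and hence $Z_U > Z(G)$.

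The only place that requires any real content is $(1) \Rightarrow (2)$, and the main obstacle there is ensuring that $U(G) = K(G)$ so that Lemma~\ref{K equiv conditions} applies. This is precisely what Corollary~\ref{U eq K} gives us, so the entire argument reduces to careful bookkeeping.
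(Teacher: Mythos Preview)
Your proof is correct and follows essentially the same strategy as the paper's: reduce $(1)\Rightarrow(2)$ to the $K$-version via Corollary~\ref{U eq K} and Lemma~\ref{K equiv conditions}, and handle $(3)\Rightarrow(1)$ trivially. The only difference is in $(2)\Rightarrow(3)$: you argue directly from the definition of $Z_U$, while the paper first observes that $(2)$ forces $U=K(G)$ (combining $K(G)\le[g,G]=U$ from Lemma~\ref{K intersection} with $U\le K(G)$ from Lemma~\ref{U le K}) and then invokes Lemma~\ref{K equiv conditions} once more; your route is slightly more elementary, but the two proofs are otherwise the same.
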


\begin{proof}
Suppose (1).  By Corollary \ref{U eq K}, we have that $U = K (G)$, and using Lemma \ref{K equiv conditions}, we have (2).  Next, suppose (2).  In light of Lemma \ref{K intersection}, we have $K (G) \le [g,G] = U$, and by Lemma \ref{U le K}, we have $K (G) \le U(G)$, and so, $K (G) = U$, and $Z(G) < Z_U = Z_{K(G)}$ and applying Lemma \ref{K equiv conditions}, we have $U = K(G) = [Z_{K(G)},G] = [Z_U,G]$.   The fact that (3) implies (1) is immediate.
\end{proof}

Let $G$ be a group.  Recall that a {\it minimal class} of $G$ is a non-central conjugacy class of $G$ whose size is minimal among the noncentral conjugacy classes of $G$.  

\begin{lem}\label{mann sub}
Let $G$ be a $p$-group. If $Z_{U(G)} > Z(G)$, then every minimal class of $G$ lies in $Z_2$ and has size $\norm{U(G)}$. Moreover, we have
\[Z_{U(G)} = \{g \in G \mid \norm{\mathrm{cl}_G(g)} \le \norm{U(G)} \} = Z(G) \cup \{g \in G \mid \mathrm{cl}_G (g) = g U (G) \}.\]
\end{lem}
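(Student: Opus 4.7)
The plan is to exploit the Camina triple $(G, Z(G), U(G))$ furnished by Lemma \ref{nontrivueqpq}(1). Writing $U = U(G)$ and $Z = Z(G)$, and recalling that $U \le Z$ by Lemma \ref{properties}, the Camina triple condition of Lemma \ref{equiv}(2) provides, for every $g \in G \setminus Z$ and every $u \in U$, some $x \in G$ with $[g,x] = u$; equivalently, $U \subseteq \gamma_G(g)$ for every noncentral $g$. Since the standard identity $g^x = g[g,x]$ yields $\mathrm{cl}_G(g) = g\gamma_G(g)$, this gives $|\mathrm{cl}_G(g)| = |\gamma_G(g)| \ge |U|$ for every $g \in G \setminus Z$. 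The hypothesis $Z_U > Z$ supplies some $g \in Z_U \setminus Z$, for which equality will be verified momentarily, so once this is done the minimal class size must be exactly $|U|$.

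Next I would pin down precisely when $|\mathrm{cl}_G(g)| = |U|$. For $g \in Z_U \setminus Z$, we have $\gamma_G(g) \subseteq [g,G] \le U$, and combining this with $U \subseteq \gamma_G(g)$ forces $\gamma_G(g) = U$; hence $\mathrm{cl}_G(g) = gU$ and $|\mathrm{cl}_G(g)| = |U|$. Conversely, given $g \in G \setminus Z$ with $|\mathrm{cl}_G(g)| = |U|$, the inclusion $U \subseteq \gamma_G(g)$ together with the cardinality constraint forces $\gamma_G(g) = U$; since $U$ is already a subgroup, $[g,G] = \langle \gamma_G(g) \rangle = U$, so $g \in Z_U$. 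Because $U \le Z$, every $g \in Z_U$ satisfies $[g,G] \le Z$, that is $g \in Z_2$, so in particular every minimal class lies in $Z_2$.

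Assembling these facts gives the displayed chain
\[
Z_U \;=\; Z \cup (Z_U \setminus Z) \;=\; Z(G) \cup \{g \in G \mid \mathrm{cl}_G(g) = gU\} \;=\; \{g \in G \mid |\mathrm{cl}_G(g)| \le |U(G)|\},
\]
where the middle equality uses that $\mathrm{cl}_G(g) = gU$ fails for $g \in Z$ (since $U > 1$ is guaranteed by $Z_U > Z$), and the right equality combines the bound $|\mathrm{cl}_G(g)| \ge |U|$ on $G \setminus Z$ with the trivial observation $|\mathrm{cl}_G(g)| = 1 \le |U|$ for $g \in Z$. I do not foresee a real obstacle here: once the Camina triple inclusion $U \subseteq \gamma_G(g)$ is in hand, the rest reduces to short set-theoretic manipulations, the only subtle point being that $U$ is already a subgroup, so $\gamma_G(g) = U$ upgrades for free to $[g,G] = U$.
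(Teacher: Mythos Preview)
Your proof is correct and follows essentially the same approach as the paper: both exploit the Camina triple $(G,Z(G),U(G))$ to see that every noncentral class contains a full $U(G)$-coset, then use an element of $Z_{U(G)}\setminus Z(G)$ to realize the lower bound $|U(G)|$ exactly. Your argument is in fact slightly more streamlined at the end: you directly identify $\{g:|\mathrm{cl}_G(g)|\le |U|\}$ with $Z_{U(G)}$ via the equivalence $\gamma_G(g)=U \Leftrightarrow [g,G]=U$, whereas the paper closes by separately checking that this set is closed under multiplication.
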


\begin{proof}
Let $m$ be the size of a minimal class of $G$. The assumption that $Z_{U (G)} > Z (G)$, implies $\norm {U(G)} > 1$.  By Lemma \ref{nontrivueqpq} (1), we know that $(G,Z(G),U(G))$ is a Camina triple, and by Lemma \ref{basics}, we see that every non-central class of $G$ is a union of $U(G)$-cosets, so $m$ divides the size of every non-central conjugacy class of $G$. Since $Z_{U(G)} > Z(G)$, there exists an element $g \in G \setminus Z(G)$ satisfying $[g,G] \le U(G)$ by Lemma \ref{equivconjs}.  Since $U (G) \le [g,G]$ by Lemma \ref{UDg}, we have $U (G) = [g,G]$ and hence $\mathrm{cl}_G (g) = gU(G)$. It follows that $m = \norm{U(G)}$. Thus, if $x \in G$ satisfies $\norm {\mathrm{cl}_G(x)} = m$, then $\mathrm{cl}_G(x) = xU(G)$. This means that if $x$ belongs to a minimal class, then $[x,G] = U(G)$. In particular, this means $x$ lies in $Z_2$ if $x$ belongs to a minimal class. Now, let $H = \{g \in G: \norm{\mathrm{cl}_G (g)} \le m \}$, and let $g, h \in H \setminus {Z}(G)$. Then $[gh,G] \le [g,G][h,G] = U$. It follows that $[gh,G]$ is either $U(G)$ or trivial, and so $gh \in H$.
\end{proof}

We mention one consequence of Lemma~\ref{mann sub}. In \cite{mann1}, Mann considers the subgroup generated by the minimal elements, and he proved when $G$ is a $p$-group, that this subgroup has nilpotence class at most $3$.  In \cite{isma}, Isaacs considered the subgroup $M (G)$ generated by the minimal elements and the central elements.  He proved that if $G$ is supersolvable, then $M (G)$ is nilpotent of nilpotence class at most $3$.  In \cite{mann2}, Mann further generalized Isaacs's result.  In the situation of Lemma~\ref{mann sub}, we have that $Z_{U(G)}$ is the {\it Mann subgroup} of $G$, and its nilpotence class is at most $2$.

\begin{lem} \label{U equiv}
Let $G$ be a nonabelian $p$-group and let $U = U (G) > 1$.  The following are equivalent:
\begin{enumerate}
\item $Z_{U(G)} > Z(G)$.
\item $U(G/N) = UN/N$ for every normal subgroup $N$ of $G$ not containing $U$. 
\item An irreducible character $\chi$ vanishes off of ${Z}(G)$ if and only if $\chi \in \irr {G \mid U}$.
\end{enumerate}
\end{lem}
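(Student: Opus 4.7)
The plan is to establish the equivalences in the cycle $(1) \Rightarrow (3) \Rightarrow (2) \Rightarrow (1)$, paralleling the proof of Lemma~\ref{K equiv cond 2}. Note that in (3), the implication ``$\chi \in \irr{G \mid U} \Rightarrow \chi$ vanishes off $Z(G)$'' is automatic from Lemma~\ref{uproperties}(1), so the real content of (3) is its converse, namely that every character vanishing off $Z(G)$ has $U$ outside its kernel.

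For $(1) \Rightarrow (3)$: assuming $Z_U > Z(G)$, Corollary~\ref{U eq K} gives $U = K(G)$, and Lemma~\ref{K equiv cond 2} then implies that every $\chi \in \irr{G/U}$ satisfies $Z(\chi) > Z(G)$. If some $\chi$ vanished off $Z(G)$ with $U \le \ker(\chi)$, then viewing $\chi \in \irr{G/U}$ would give $Z(\chi) > Z(G)$, while also $Z(\chi) \le V(\chi) \le Z(G)$ yields a contradiction. For $(3) \Rightarrow (2)$: let $N \lhd G$ with $U \not\le N$; by Lemma~\ref{centerproperty}(3) we already have $UN/N \le U(G/N)$, and if this containment were strict, pick $\psi \in \irr{G/N \mid U(G/N)}$ with $UN \le \ker(\psi)$. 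Since $\psi \in \irr{G/N \mid U(G/N)}$, it vanishes off $Z(G/N) = Z(G)/N$ (by Lemma~\ref{centerproperty}(2) together with the defining property of $U(G/N)$ from Lemma~\ref{uproperties}(1)), so lifted to $G$ the character $\psi$ vanishes off $Z(G)$ while having $U$ in its kernel, contradicting (3).

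The main step, and main obstacle, is $(2) \Rightarrow (1)$: I would argue by induction on $|G|$, assuming (2) but $Z_U = Z(G)$, with $G$ a minimal counterexample. Choose $g \in Z_2 \setminus Z(G)$ with $g^p \in Z(G)$ and $|[g,G]|$ minimal; by Lemma~\ref{equivconjs}, $Z_U = Z(G)$ forces $[g,G] > U$. Take a minimal normal subgroup $N$ of $G$ inside $[g,G]$ not containing $U$, verify that $G/N$ still satisfies (2) (using that $U(G/N) = UN/N$), and invoke induction to obtain $xN \in Z_{U(G/N)} \setminus Z(G/N)$ with $[xN,G/N] = U(G/N) = UN/N$ via Lemma~\ref{equivconjs} applied in $G/N$. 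Following the structure of the proof of Lemma~\ref{K equiv cond 2}(2) $\Rightarrow$ (1) with $U$ replacing $K$ throughout---using Lemma~\ref{UDg} in place of Lemma~\ref{K intersection} and Lemma~\ref{centerproperty} in place of Lemma~\ref{Kprops}---one tracks $[x,G]$ through the quotient to show $[x,G] = [g,G]$, whence $[g,G]/N = U(G/N) = UN/N$, i.e., $[g,G] = UN$. The contradiction then comes from finding a second minimal normal $L \ne N$ inside $[g,G]$ not containing $U$. The delicate bookkeeping around the elementary abelian decomposition $[g,G] = U \times C_g$ and the existence of the relevant minimal normals is what makes this step technical; the bridging lemmas that allow the transfer from $K$ to $U$ are Corollary~\ref{U eq K} and Lemma~\ref{equivconjs}, which together play the role that Lemmas~\ref{K equiv conditions} and~\ref{K intersection} played in the analogous argument for $K$.
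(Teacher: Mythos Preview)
Your proposal is correct and follows essentially the same cycle $(1)\Rightarrow(3)\Rightarrow(2)\Rightarrow(1)$ as the paper. The only noticeable difference is in $(1)\Rightarrow(3)$: you route through Corollary~\ref{U eq K} and Lemma~\ref{K equiv cond 2} to conclude that every $\chi\in\irr{G/U}$ satisfies $Z(\chi)>Z(G)$, whereas the paper observes this directly from $Z_U>Z(G)$ (since $Z(\chi)\ge Z_U$ for any $\chi\in\irr{G/U}$), and then notes $V(\chi)\ge Z(\chi)>Z(G)$. Your argument is valid but slightly circuitous. For $(3)\Rightarrow(2)$ you match the paper exactly, and for $(2)\Rightarrow(1)$ you actually provide more detail than the paper, which simply states that the proof ``follows the same lines as the proof that (2) implies (1) in Lemma~\ref{K equiv cond 2} where we use $U$ in place of $K$.''
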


\begin{proof}
Assume (1) holds. Since ${Z}(G/U) > {Z}(G)/U$, we have that ${Z}(\chi) > {Z}(G)$ for every character $\chi \in \irr {G/U}$. So if $\chi \in \irr {G/U}$, we must have ${V} (\chi) > {Z}(G)$.   This implies that no character in $\irr {G/U}$ vanishes off of $Z (G)$.  On the other hand, the definition of $U (G)$ implies that the characters in $\irr {G \mid U}$ all vanish off of $Z (G)$.  Thus (3) follows.

Next, assume (3), and suppose that ${U} (G/N) > UN/N$. As $\irr {G/N\mid UN/N} = \irr {G\mid U} \cap \irr {G/N}$, there exists a character $\chi \in \irr {G/UN}$ that vanishes off of ${Z} (G/N)$. By Lemma~\ref{centerproperty}, ${Z} (G/N) = {Z} (G)/N$, which implies that $\chi \in \irr {G/U}$ and vanishes off of ${Z}(G)$. As this is a contradiction, (2) holds.

Finally, the proof that (2) implies (1) follows the same lines as the proof that (2) implies (1) in Lemma \ref{K equiv cond 2} where we use $U$ in place of $K$. 
\end{proof}

We now show that when $G$ is a GVZ-group, $U (G)$ and $K (G)$ are equal.   

\begin{lem}\label{U = K}
Let $G$ be a GVZ-group. Then ${U} (G) = K (G)$. In particular, this holds if $G$ has nilpotence class 2.
\end{lem}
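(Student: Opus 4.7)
The plan is to bracket $K(G)$ between $U(G)$ on one side using Lemma~\ref{U le K}, and on the other side to deduce the reverse containment by showing that the two characteristic families of characters used to define these subgroups actually coincide for a GVZ-group. Specifically, by Lemma~\ref{UDg}~(3) we have $U(G) = \bigcap_{\chi \in \mathcal{V}} \ker(\chi)$ where $\mathcal{V} = \{\chi \in \irr G : V(\chi) > Z(G)\}$, and by definition $K(G) = \bigcap_{\chi \in \mathcal{X}} \ker(\chi)$ where $\mathcal{X} = \{\chi \in \irr G : Z(\chi) > Z(G)\}$. Thus it suffices to prove $\mathcal{X} = \mathcal{V}$.

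The key step is the identity $V(\chi) = Z(\chi)$ for every $\chi \in \irr G$ when $G$ is a GVZ-group. The containment $Z(\chi) \le V(\chi)$ holds for any group and any $\chi$: if $g \in Z(\chi)$, then $\chi(g) = \chi(1)\lambda(g\ker\chi)$ for a linear character $\lambda$ of $Z(\chi)/\ker\chi$, so $|\chi(g)| = \chi(1) \ne 0$ and $g$ is a generator of $V(\chi)$. The reverse containment $V(\chi) \le Z(\chi)$ is precisely the content of the GVZ hypothesis: every generator $g$ of $V(\chi)$ satisfies $\chi(g) \ne 0$, which by GVZ forces $g \in Z(\chi)$. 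Once $V(\chi) = Z(\chi)$ for all $\chi$, the conditions $V(\chi) > Z(G)$ and $Z(\chi) > Z(G)$ are equivalent, so $\mathcal{V} = \mathcal{X}$ and the two intersections coincide, giving $U(G) = K(G)$.

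For the ``in particular'' clause, I would verify that every group of nilpotence class~$2$ is a GVZ-group, after which the main statement applies. Given $\chi \in \irr G$, pass to $\overline{G} = G/\ker\chi$, in which $\overline\chi$ is faithful and $\overline{G}$ has class at most~$2$. If $\overline{G}$ is abelian, then $\overline\chi$ is linear, vanishes nowhere, and $Z(\overline\chi) = \overline{G}$. Otherwise $Z(\overline G)$ is cyclic (as $\overline G$ admits a faithful irreducible character), so $\overline\chi|_{Z(\overline G)} = \overline\chi(1)\lambda$ with $\lambda$ faithful. For $g \in \overline G \setminus Z(\overline G)$, choose $x$ with $1 \ne [x,g] \in Z(\overline G)$; then the standard computation $\overline\chi(g) = \overline\chi(xgx^{-1}) = \overline\chi([x,g]g) = \lambda([x,g])\overline\chi(g)$ and $\lambda([x,g]) \ne 1$ force $\overline\chi(g) = 0$. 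Hence $\chi$ vanishes off $Z(\chi)$ as required.

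The proof is almost entirely bookkeeping once the identification $V(\chi) = Z(\chi)$ is in hand; there is no real obstacle, as both inclusions are immediate from the definitions of $V(\chi)$, $Z(\chi)$, and the GVZ property. The only piece requiring a little work is the verification that class~$2$ implies GVZ, and even there the standard conjugation argument above suffices.
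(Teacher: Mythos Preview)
Your proof is correct. Your route---showing $V(\chi)=Z(\chi)$ for all $\chi\in\irr G$ in a GVZ-group and hence $\mathcal V=\mathcal X$, so the two defining intersections agree---is precisely one of the alternative arguments the paper itself records immediately after its proof of this lemma. The paper's primary proof is marginally different in packaging: rather than comparing the index sets $\mathcal X$ and $\mathcal V$, it argues directly that if $K(G)\nleq\ker(\chi)$ then $Z(\chi)=Z(G)$, and the GVZ hypothesis then forces $\chi$ to vanish off $Z(G)$, whence $K(G)\le U(G)$ by the maximality characterization of $U(G)$. Both arguments are one-liners once $U(G)\le K(G)$ is in hand from Lemma~\ref{U le K}; yours has the small advantage of making the symmetry between the two definitions explicit.

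Your verification that nilpotence class~$2$ implies GVZ is also correct and more detailed than the paper, which simply asserts the implication without proof.
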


\begin{proof}
If $K(G) \nleq \ker(\chi)$ for some character $\chi \in \irr G$, then ${Z} (\chi) = {Z} (G)$, so $\chi$ is fully ramified over ${Z}(G)$ since $G$ is a GVZ-group. This implies $K(G) \le {U}(G)$. The reverse inclusion comes from Lemma~\ref{U le K}.
\end{proof}

One may prove Lemma~\ref{U = K} in a variety of different ways. For example, the lemma follows directly from the definition of $K(G)$ and Lemma~\ref{UDg} (3), since $V (\chi) = Z (\chi)$ for all characters $\chi \in \irr G$ whenever $G$ is a GVZ-group. It also follows from a combination of Lemma~\ref{UDg} (2) and the fact that $\gamma_G (g) = [g,G]$ for all $g \in G$ when $G$ is a GVZ-group which is proved in \cite{SBML}.

We now define a chain of subgroups $U_i$ by $U_0 = 1$ and $U_{i+1}/U_i = {U} (G/U_i)$ for each $i \ge 0$. Recall from Lemma~\ref{properties} that $U (G) \le G'$ when $G$ is nonabelian. It follows that $U_{i+1} \le G'$ when $U_i < G'$.  On the other hand, we see that $U_{i+1} = G$ when $U_i = G'$.  Observe that Lemma~\ref{U = K} implies that $U_i = K_i$ for all integers $i$ if $G$ is a GVZ-group, and Lemma \ref{U le K} implies that we always have $U_i \le K_i$.  Furthermore, one can see from Corollary~\ref{U eq K} and Lemma~\ref{centerproperty} (4) that $U_i < K_i$ implies that $U_i = U_{i+1}$.  Note that we will write $U_\infty$ for the terminal term in this series.  

\begin{thm}\label{nested gvz U thm}
Let $G$ be a nonabelian group. Then $G$ is a nested GVZ-group if and only if $U_\infty = G$.  Moreover, in the event that $G$ is a nested GVZ-group with chain of centers $G = X_0 > X_1 > \dotsb > X_n > 1$, then $U_i = [X_{n-i},G]$ for all $i$. 
\end{thm}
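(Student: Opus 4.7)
The plan is to prove both directions by induction, closely following the proof of Theorem \ref{kappa series}. For the forward direction, suppose $G$ is a nested GVZ-group with chain of centers $G = X_0 > X_1 > \dotsb > X_n > 1$; I would establish $U_i = [X_{n-i},G]$ by induction on $i$. The base case $i = 0$ is immediate from $X_n = Z(G)$ (Lemma \ref{lewisgvz}(1)), which gives $U_0 = 1 = [Z(G),G]$. For the inductive step, both ``nested'' and ``GVZ'' pass to quotients in the standard way, so $G/U_{i-1}$ is a nested GVZ-group whose chain of centers is $X_0/U_{i-1} > \dotsb > X_{n-i+1}/U_{i-1}$ by Lemma 2.7 of \cite{ML19gvz}. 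Applying Lemma \ref{U = K} (to convert $U$ to $K$ using GVZ) together with Lemma \ref{nested K} inside $G/U_{i-1}$ then yields
\[
U_i/U_{i-1} \;=\; U(G/U_{i-1}) \;=\; K(G/U_{i-1}) \;=\; [X_{n-i}/U_{i-1},\, G/U_{i-1}] \;=\; [X_{n-i},G]/U_{i-1},
\]
so $U_i = [X_{n-i},G]$. Setting $i = n$ gives $U_n = G'$, and since $G/G'$ is abelian, the convention that $U(H) = H$ for abelian $H$ yields $U_{n+1} = G$, so $U_\infty = G$.

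For the reverse direction, assume $U_\infty = G$ and induct on $|G|$. The trivial case is clear, so suppose $|G| > 1$; then $U_\infty = G$ forces $U := U(G) > 1$. Since $U_i(G/U) = U_{i+1}(G)/U$, we have $U_\infty(G/U) = G/U$, so by induction $G/U$ is a nested GVZ-group. I would now verify that $G$ itself is both GVZ and nested by partitioning $\irr G$ into $\irr{G \mid U}$ and $\irr{G/U}$. For $\chi \in \irr{G \mid U}$, the inclusion $U \le U(G)$ together with Lemma \ref{uproperties}(1) (equivalently, the Camina triple $(G,Z(G),U)$ from Lemma \ref{U camina}) forces $\chi$ to vanish off $Z(G)$, hence $V(\chi) \le Z(G)$. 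Since $Z(G) \le Z(\chi) \le V(\chi)$ always, this pins down $Z(\chi) = V(\chi) = Z(G)$, so $\chi$ vanishes off $Z(\chi)$. For $\chi \in \irr{G/U}$, the GVZ property of $G/U$ gives that $\chi$ vanishes off its center in $G/U$, which pulls back to $Z(\chi)$ in $G$. Hence $G$ is a GVZ-group.

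It remains to check nestedness. The chain of centers of $G/U$ pulls back to a chain $G = Y_0 > \dotsb > Y_m$ in $G$ whose terminal element $Y_m$ is the preimage of $Z(G/U) = Z_U/U$, so $Y_m = Z_U \ge Z(G)$. Every character in $\irr{G \mid U}$ has center equal to $Z(G)$, so adjoining this one value yields either the same chain (when $Y_m = Z(G)$) or the strictly longer chain $Y_0 > \dotsb > Y_m > Z(G)$; in either case $\{Z(\chi) : \chi \in \irr G\}$ is a chain, and therefore $G$ is a nested GVZ-group. The main obstacle is this reverse direction, specifically extracting the GVZ condition for characters in $\irr{G \mid U}$ from the Camina triple structure and then verifying that the two families of centers combine into a single chain; the forward direction reduces immediately to the $K$-series analogs once one invokes Lemma \ref{U = K}.
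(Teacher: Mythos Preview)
Your argument is correct. The forward direction matches the paper's almost exactly: the paper simply observes that since $G$ is a GVZ-group, Lemma \ref{U = K} gives $U_i = K_i$ for all $i$, and then everything (including $U_i = [X_{n-i},G]$ and $U_\infty = G$) follows at once from Theorem \ref{kappa series}; you unroll this same reasoning through an explicit induction, which is harmless.

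The reverse direction is where you genuinely diverge. The paper argues in two lines: since $U_i \le K_i$ for all $i$ (Lemma \ref{U le K}), the hypothesis $U_\infty = G$ forces $K_\infty = G$, so $G$ is nested by Theorem \ref{kappa series}; and since $U_\infty = G$, Lemma \ref{Z(chi) = V(chi)} gives $Z(\chi) = V(\chi)$ for every nontrivial $\chi \in \irr G$, so $G$ is GVZ. Your approach instead runs an induction on $|G|$, splits $\irr G$ into $\irr{G \mid U}$ and $\irr{G/U}$, and verifies both the GVZ and nested conditions by hand for each piece. What you gain is that your proof stays entirely within the $U$-series framework and does not appeal to the $K$-series results or to Lemma \ref{Z(chi) = V(chi)} (which in the paper is stated later, though there is no circularity). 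What the paper's route buys is brevity: once the $K$-theory is in place, nestedness is free, and the GVZ conclusion is packaged in a single lemma rather than re-derived through the inductive partition.
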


\begin{proof}
If $G$ is a GVZ-group, then so is any epimorphic image of $G$. Also, we have that $U (G) = K (G)$ by Lemma \ref{U = K}.  Hence, $K_i = U_i$ for all $i$ when $G$ is a GVZ-group.  Thus, $G$ being a nested GVZ-group implying $G = U_\infty$ follows from Theorem~\ref{kappa series}.   If $G$ is a nested GVZ-group with chain of centers $G = X_0 > X_1 >\dotsb > X_n > 1$, then Theorem~\ref{kappa series} also gives $U_i = K_i = [X_{n-i},G]$.  

Suppose $U_\infty = G$.  Since $U_i \le K_i$, this implies $K_\infty = G$ and by Theorem \ref{kappa series}, this implies that $G$ is nested.  Since $U_\infty = G$, we may apply Lemma \ref{Z(chi) = V(chi)} to see that $G$ is a GVZ-group.   
\end{proof}

We now have the promised proof of Theorem \ref{intro U > 1}.

\begin{cor}
Let $G$ be a group.  Then $G$ is a nested GVZ-group if and only if $U (G/N) > 1$ for every proper normal subgroup $N$ of $G$.
\end{cor}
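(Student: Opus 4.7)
The plan is to mirror the proof of Corollary~\ref{nested cor} (the analogous statement for $K$), replacing $K$ by $U$ throughout and invoking Theorem~\ref{nested gvz U thm} in place of Theorem~\ref{kappa series}. The equivalence in Theorem~\ref{nested gvz U thm} between \emph{being a nested GVZ-group} and the condition $U_\infty = G$ is exactly the lever that makes this argument work, just as the $K_\infty$ characterization drives the nested case.

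For the forward direction, I would start by assuming that $G$ is a nested GVZ-group. Given a proper normal subgroup $N$ of $G$, the quotient $G/N$ is itself a nested GVZ-group, so Theorem~\ref{nested gvz U thm} gives $U_\infty(G/N) = G/N > 1$. If $U(G/N)$ were trivial, then the entire $U$-chain of $G/N$ would collapse to $1$, contradicting $U_\infty(G/N) > 1$; hence $U(G/N) > 1$.

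For the converse, I would proceed by induction on $|G|$. The base case $|G| = 1$ is vacuous. For the inductive step, applying the hypothesis with $N = 1$ yields $U(G) > 1$. Any proper normal subgroup of $G/U(G)$ has the form $H/U(G)$ with $U(G) \le H \lhd G$ and $H < G$, and the third isomorphism theorem identifies the further quotient with $G/H$; so the hypothesis on $G$ transfers to $G/U(G)$. By induction, $G/U(G)$ is a nested GVZ-group, and Theorem~\ref{nested gvz U thm} gives $U_\infty(G/U(G)) = G/U(G)$. Lifting this equality through the definition of the $U$-series yields $U_\infty(G) = G$, and a final application of Theorem~\ref{nested gvz U thm} lets us conclude that $G$ is a nested GVZ-group.

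The only subtle point is verifying that $U_\infty(G/U(G)) = G/U(G)$ implies $U_\infty(G) = G$. This is really just the observation that $U_{i+1}(G)/U(G) = U_i(G/U(G))$ for all $i \ge 0$, which follows directly from the recursive definition $U_{j+1}/U_j = U(G/U_j)$; I do not expect this to be an obstacle. The main conceptual step is recognizing that the proof is formally identical to that of Corollary~\ref{nested cor}, with Theorem~\ref{nested gvz U thm} supplying the bridge between $U_\infty = G$ and the nested GVZ condition, so no genuinely new work is required beyond quoting the earlier machinery.
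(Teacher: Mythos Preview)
Your proposal is correct and is exactly the approach the paper takes: the paper does not even write out a proof, but simply notes that it is ``essentially identical to the proof of Corollary~\ref{nested cor} with Theorem~\ref{nested gvz U thm} in place of Theorem~\ref{kappa series},'' which is precisely the substitution you carry out. The details you supply (quotients of nested GVZ-groups are nested GVZ, the inductive lift of $U_\infty(G/U(G)) = G/U(G)$ to $U_\infty(G) = G$) are all straightforward and used elsewhere in the paper.
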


Notice that the proof of this will essentially be identical to the proof of Corollary \ref{nested cor} with Theorem \ref{nested gvz U thm} in place of Theorem \ref{kappa series}, so we do not repeat it.

We conclude this section by presenting the analog of Theorem \ref{last sect 4} to nested GVZ-groups.  Notice that we replace the condition that $Z(\chi) = Z_N$ with $\chi$ is fully ramified over $Z_N$ which we have seen is equivalent to $V (\chi) = Z (\chi) = Z_N$. 

\begin{thm}\label{nested gvz thm 3}
Let $G$ be a nonabelian group. The following are equivalent. 
\begin{enumerate}[label={\bf(\arabic*)}]
\item $G$ is a nested GVZ-group. 
\item For every normal subgroup $N \lhd G$, there exists an element $g_N \in G \setminus N$ so that $\{ \chi \in \irr {G/N} \mid \chi  {\rm~is~fully~ramified~over~} Z_N \} = \{ \chi \in \irr {G/N} \mid g_N \notin Z(\chi) \}.$ 
\item For every normal subgroup $N\lhd G$, there exists a normal subgroup $N < L_N \lhd G$ so that $\{ \chi \in \irr {G/N} \mid \chi  {\rm~is~fully~ramified~over~} Z_N \}  = \{ \chi \in \irr {G/N \mid L_N/N}\}.$
\end{enumerate}
\end{thm}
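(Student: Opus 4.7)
The plan is to follow the proof of Theorem \ref{last sect 4} almost verbatim, replacing $K(G)$ with $U(G)$ throughout and invoking the GVZ-flavoured analogues at each step: Lemma \ref{equivconjs} in place of Lemma \ref{K equiv conditions}, Lemma \ref{U equiv} in place of Lemma \ref{K equiv cond 2}, and the corollary to Theorem \ref{nested gvz U thm} in place of Corollary \ref{nested cor}. The bridge observation, obtained by combining Lemma \ref{U equiv}(3) with the remark preceding the theorem, is that in a nonabelian nested GVZ-quotient $G/N$ the characters of $G/N$ fully ramified over $Z_N$ coincide with the characters $\chi \in \irr {G/N}$ satisfying $U(G/N) \nleq \ker(\chi)$.

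For $(1) \Rightarrow (2)$, assume $G$ is a nested GVZ-group and fix a normal subgroup $N \lhd G$ with $G' \not\le N$ (the only nontrivial case). Then $G/N$ is a nonabelian nested GVZ-group, and inspection of its chain of centers together with Lemma \ref{U = K} yields $Z_{U(G/N)} > Z(G/N)$. By Lemma \ref{equivconjs} there exists an element $g_N N \in G/N$ with $[g_N N, G/N] = U(G/N)/N$; lift to $g_N \in G \setminus N$. Combining the bridge observation with Lemma \ref{cen cond} shows that $\chi$ is fully ramified over $Z_N$ if and only if $g_N N \notin Z(\chi)$. For $(2) \Rightarrow (3)$, set $L_N = [g_N,G]N$; strict containment $L_N > N$ holds because $g_N \notin Z_N$ (otherwise the set described in (2) would be empty), and Lemma \ref{center kernel} gives $g_N N \notin Z(\chi)$ if and only if $L_N/N \nleq \ker(\chi)$, i.e., $\chi \in \irr {G/N \mid L_N/N}$.

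For $(3) \Rightarrow (1)$, it suffices by the corollary to Theorem \ref{nested gvz U thm} to prove $U(G/N) > 1$ for every proper $N \lhd G$. If $G' \le N$, then $G/N$ is abelian and $U(G/N) = G/N > 1$ by convention. Otherwise, (3) provides $L_N > N$ such that every character in $\irr {G/N \mid L_N/N}$ is fully ramified over $Z_N$ and hence vanishes on $(G/N) \setminus (Z_N/N)$; by the defining property of $U$ in Lemma \ref{uproperties}(1) this forces $L_N/N \le U(G/N)$, so $U(G/N) > 1$. The corollary then concludes that $G$ is a nested GVZ-group.

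The main obstacle I foresee is establishing the bridge observation uniformly across all nested GVZ-quotients: one must verify that $Z_{U(G/N)} > Z(G/N)$ in every nonabelian nested GVZ-quotient $G/N$ so that Lemma \ref{U equiv} applies and Lemma \ref{equivconjs} produces a single witness $g_N$. This is handled by noting that the penultimate term of the chain of centers of $G/N$ strictly contains $Z(G/N)$, supplying the needed element of $Z_{U(G/N)} \setminus Z(G/N)$.
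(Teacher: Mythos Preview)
Your argument for $(1)\Rightarrow(2)$ and for $(3)\Rightarrow(1)$ matches the paper's: both pass to the nested GVZ quotient $G/N$, use $Z_{U(G/N)}>Z(G/N)$ together with Lemma~\ref{equivconjs} to produce a witness $g$ with $[gN,G/N]=U(G/N)$, and for $(3)\Rightarrow(1)$ argue that $L_N/N\le U(G/N)$ forces $U(G/N)>1$. The paper phrases the last step as an induction ending in $U_\infty=G$ via Theorem~\ref{nested gvz U thm}, while you invoke the packaged corollary; these are the same argument.

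The divergence is in how you reach (3). The paper derives (3) directly from (1), using the \emph{specific} element just constructed, for which $[g_N,G]N/N=U(G/N)>1$ automatically, and then closes the equivalence as $(1)\Rightarrow(2)$, $(1)\Rightarrow(3)$, $(3)\Rightarrow(1)$. You instead route the cycle as $(1)\Rightarrow(2)\Rightarrow(3)\Rightarrow(1)$ and attempt an abstract $(2)\Rightarrow(3)$, and here there is a gap: your claim that $g_N\notin Z_N$ rests on ``otherwise the set described in (2) would be empty,'' but nothing in hypothesis~(2) rules out an empty fully-ramified set. Unlike the set $\{\chi\mid Z(\chi)=Z_N\}$ appearing in Theorem~\ref{last sect 4}, which is always nonempty for nonabelian $G/N$ (since $\bigcap_\chi Z(\chi)=Z_N$), the set $\{\chi\mid\chi\text{ fully ramified over }Z_N\}$ can certainly be empty---for instance whenever $|G:Z_N|$ is not a square---and in that case (2) is satisfied at $N$ by any $g_N\in Z_N\setminus N$, while your choice $L_N=[g_N,G]N$ collapses to $N$. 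So the template from Theorem~\ref{last sect 4} does not transfer verbatim at this step; the paper sidesteps the issue by never arguing from (2) in isolation.
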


\begin{proof}
Assume that $G$ is a nested GVZ-group. Observe that every quotient of a nested GVZ-group is also a nested GVZ-group. Therefore, statement (2) will follow if we show that whenever $G$ is a nested GVZ-group, there exists a nonidentity element $g \in G$ so that $\chi$ is fully ramified over $Z(G)$ if and only if $g \not\in Z(\chi)$. To that end, note that $U(G) > 1$ by Theorem \ref{nested gvz U thm}. By that same theorem, we have $Z_{U(G)} > Z(G)$, for otherwise we would have $U_2 = U (G)$ by Lemma \ref{centerproperty} (3). Hence by Lemma~\ref{equivconjs} there exists an element $g\in G$ so that $U (G) = [g,G]$; we also know from Lemma~\ref{equivconjs} that a character $\chi \in \irr G$ is fully ramified over $Z(G)$ if and only if $U(G) \nleq\ ker(\chi)$. Since $U (G) = [g,G] \le \ker(\chi)$ if and only if $g \in Z(\chi)$, statement (2) follows.  Also, since $[g,G] \lhd G$, statement (3) follows.
	
Finally, assume that (3) holds. Then there exists a nontrivial normal subgroup $L$ of $G$ so that every character  $\chi \in \irr {G \mid L}$ is fully ramified over $Z(G)$.  This means that $L \le U (G)$. In particular, we have $U (G) > 1$. Note that (3) must hold for every quotient of $G$, hence for $G/U(G)$.  Proceeding by induction on $\norm{G}$, we have that $G/U(G)$ is a nested GVZ-group. Thus, by Theorem \ref{nested gvz U thm} we have that $U_\infty = G$, and so, $G$ is a nested GVZ-group.
\end{proof}

\section {The $\delta$--series and the $\epsilon$--series}
 
We see from Theorem~\ref{kappa series} that nonabelian nested groups can be defined by the existence of a certain ascending normal series, which is a central series whenever $G$ is nilpotent. We now show that nested groups can be characterized by the existence of a descending normal series, which is also central whenever $G$ is nilpotent. Define the subgroups $\delta_i$ by setting $\delta_1 = G$ and $\displaystyle \delta_{i+1} = \!\!\!\!\!\!\! \prod_{\chi \in \irr {G \mid [\delta_i,G]}} \!\!\!\!\!\!\!\!\! Z(\chi) $ for every integer $i\ge 1$.  Note that if $[\delta_i,G] = 1$ (i.e, $\delta_i \le Z(G)$), then $\irr {G \mid [\delta_i,G]}$ is empty, and follow the convention that the empty product is the trivial subgroup.

\begin{lem} \label{contain}
If $i\ge 1$ is an integer, then $\delta_{i+1}$ is a subgroup of $\delta_i$.
\end{lem}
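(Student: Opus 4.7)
The plan is to prove Lemma~\ref{contain} by induction on $i$, with the containment $\irr{G \mid M} \subseteq \irr{G \mid N}$ from Lemma~\ref{irr sets} as the only real tool required.

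For the base case $i = 1$, the statement $\delta_2 \le \delta_1$ is immediate since $\delta_1 = G$ and $\delta_2$ is, by construction, a product of subgroups of $G$ (and hence a subgroup of $G$).

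For the inductive step, assume $\delta_{i+1} \le \delta_i$. The key observation is that commutators respect this containment: $[\delta_{i+1},G] \le [\delta_i,G]$. Applying Lemma~\ref{irr sets} with $M = [\delta_{i+1},G]$ and $N = [\delta_i,G]$ yields
\[
\irr{G \mid [\delta_{i+1},G]} \subseteq \irr{G \mid [\delta_i,G]}.
\]
Since $\delta_{i+2}$ and $\delta_{i+1}$ are defined as the products of $Z(\chi)$ over these two sets respectively, the first product is over a subset of the generators appearing in the second, and therefore
\[
\delta_{i+2} = \!\!\!\!\!\prod_{\chi \in \irr{G \mid [\delta_{i+1},G]}} \!\!\!\!\!Z(\chi) \;\le\; \!\!\!\prod_{\chi \in \irr{G \mid [\delta_i,G]}} \!\!\!Z(\chi) = \delta_{i+1},
\]
completing the induction. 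The edge case where $[\delta_i,G] = 1$ is handled by the stated convention that the empty product is the trivial subgroup, which makes $\delta_{i+1} = 1$ and the desired containment trivial from that index onward.

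There is essentially no obstacle here; the statement is a formal consequence of the monotonicity in Lemma~\ref{irr sets} together with the recursive definition of the $\delta_i$. The only thing one must be slightly careful about is ensuring the inductive hypothesis is applied at the commutator level before invoking Lemma~\ref{irr sets}, and handling the empty-product convention once the series reaches a subgroup of $Z(G)$.
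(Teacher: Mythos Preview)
Your proof is correct and follows essentially the same route as the paper's own proof: induction on $i$, with the inductive step using $\delta_{i+1}\le\delta_i\Rightarrow\irr{G\mid[\delta_{i+1},G]}\subseteq\irr{G\mid[\delta_i,G]}$ to compare the defining products. Your version is slightly more explicit in isolating the commutator containment and in invoking Lemma~\ref{irr sets} and the empty-product convention, but the argument is the same.
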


\begin{proof}
We prove this by induction on $i$. It is clear that $\delta_2 \le \delta_1 = G$. Now assume for some integer $i \ge 1$ that $\delta_{i+1} \le \delta_i$. Then $\irr {G \mid [\delta_{i+1},G] } \subseteq \irr {G \mid [\delta_{i} , G]}$, and thus we have $\displaystyle \delta_{i+2} = \!\!\!\!\!\!\! \prod_{\chi \in \irr {G \mid [\delta_{i+1},G]}} \!\!\!\!\!\!\!\!\! Z(\chi) \le \!\!\!\!\!\!\! \prod_{\chi \in \irr {G \mid [\delta_{i},G]}} \!\!\!\!\!\!\!\!\! Z(\chi) = \delta_{i+1}$
as desired.
\end{proof}

We now show that the $\delta_i$'s are a second normal series that determines if a group is nested.   

\begin{lem}\label{nested basics}
Let $G$ be a group and let $i$ be a positive integer.  
\begin{enumerate}[label={\bf(\arabic*)}]
\item If $\mathcal{M}_i = \{ N ~{\rm is~normal~in~} G \mid [\delta_i,G] \not\le N \}$, then $\delta_{i+1} = \prod_{N \in \mathcal{M}_i} Z_N$.
\item If $N$ is a normal subgroup of $G$, then either $N \le Z_N \le \delta_{i+1}$ or $[\delta_i,G] \le N$.
\item If $N$ is a normal subgroup of $G$ and $i > 1$ is integer that satisfies $[\delta_i,G] \le N$ and $[\delta_{i-1},G] \not\le N$, then $Z_N = \delta_i$.
\item If $i > 1$ and $[\delta_i,G] < [\delta_{i-1},G]$, then $Z_{[\delta_i,G]} = \delta_i$.
\item $G/[\delta_i,G]$ is a nested group.
\end{enumerate}
\end{lem}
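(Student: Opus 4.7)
The plan is to prove the five parts in order, as each builds on the previous. For (1), I would unpack the defining product $\delta_{i+1} = \prod_{\chi \in \irr{G \mid [\delta_i,G]}} Z(\chi)$ and show both containments with $\prod_{N \in \mathcal{M}_i} Z_N$. The forward inclusion is easy: for each $\chi \in \irr{G \mid [\delta_i,G]}$, put $N = \ker(\chi)$; since $[\delta_i,G]\not\le N$, we have $N \in \mathcal{M}_i$, and $Z(\chi) = Z_{\ker(\chi)} = Z_N$. For the reverse, fix $N \in \mathcal{M}_i$ and $g \in Z_N$. The key observation is that $[\delta_i, G] \not\le N = \bigcap_{\psi \in \irr{G/N}} \ker(\psi)$, so there exists $\psi_0 \in \irr{G/N}$ that lies in $\irr{G \mid [\delta_i,G]}$; since $[g,G] \le N \le \ker(\psi_0)$, Lemma~\ref{center kernel} gives $g \in Z(\psi_0) \le \delta_{i+1}$.

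Part (2) is an immediate corollary: either $[\delta_i, G] \le N$, in which case we are done, or $N \in \mathcal{M}_i$ and (1) yields $Z_N \le \delta_{i+1}$; in either case $N \le Z_N$ since $N$ is normal. For (3), the containment $\delta_i \le Z_N$ comes from $[\delta_i,G]\le N$ directly (this is the definition of $Z_N$). The containment $Z_N \le \delta_i$ follows by applying (2) at index $i-1$: since $[\delta_{i-1},G] \not\le N$, we are in the first alternative and get $Z_N \le \delta_i$. Part (4) is the special case of (3) with $N = [\delta_i, G]$, since then $[\delta_i, G] \le N$ trivially and $[\delta_{i-1}, G]\not\le N$ by hypothesis.

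Finally, for (5), fix $\chi \in \irr{G/[\delta_i,G]}$ and let $j$ be the smallest integer in $\{1,\dots,i\}$ with $[\delta_j,G]\le \ker(\chi)$ (this exists because $[\delta_i,G]\le \ker(\chi)$). If $j = 1$ then $G' \le \ker(\chi)$, so $\chi$ is linear and $Z(\chi) = G = \delta_1$. If $j > 1$, applying (3) with $N = \ker(\chi)$ gives $Z(\chi) = Z_{\ker(\chi)} = \delta_j$. Hence every $Z(\chi)$ for $\chi \in \irr{G/[\delta_i,G]}$ belongs to the chain $\delta_1 \ge \delta_2 \ge \dotsb \ge \delta_i$ guaranteed by Lemma~\ref{contain}, so $\{Z(\chi) \mid \chi \in \irr{G/[\delta_i,G]}\}$ is totally ordered and $G/[\delta_i,G]$ is nested.

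The only step requiring genuine thought is the reverse inclusion in (1); everything else is either a direct consequence of it or of the obvious relation between $Z_N$ and $[\cdot,G]\le N$. The elegant feature is that once (1)--(3) are in place, the remaining statements (including the nestedness in (5)) essentially assemble themselves from the fact that the minimal index $j$ with $[\delta_j,G]\le \ker(\chi)$ determines $Z(\chi)$ uniquely as $\delta_j$.
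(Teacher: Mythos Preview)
Your proof is correct and follows the same overall structure as the paper's: establish (1) by double containment, read off (2)--(4) as immediate consequences, and deduce (5) by showing that every center $Z(\chi)$ (equivalently $Z_N$) for $\chi\in\irr{G/[\delta_i,G]}$ lands in the chain $\delta_1\ge\dotsb\ge\delta_i$. Two minor differences in execution are worth noting: for the reverse inclusion in (1), the paper argues via a restriction to $N[\delta_i,G]$ and a direct-product decomposition of $N[\delta_i,G]/(N\cap[\delta_i,G])$ to locate a character $\chi\in\irr{G/N}\cap\irr{G\mid[\delta_i,G]}$, whereas your use of $N=\bigcap_{\psi\in\irr{G/N}}\ker(\psi)$ gives the same character in one line; and for (5), the paper quotes an external lemma (Lemma~3.2 of \cite{ML19gvz}) on normal subgroups $N\ge[\delta_i,G]$, while your version stays self-contained by working directly with $N=\ker(\chi)$.
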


\begin{proof}
Observe that $\{ \ker (\chi) \mid \chi \in \irr {G \mid [\delta_i,G]} \} \subseteq \mathcal{M}_i$.  Hence, we see that $\delta_{i+1} \le \prod_{N \in \mathcal{M}_i} Z_N$.  On the other hand, if $N \in \mathcal{M}_i$, then $N < N[\delta_i,G]$ and so there exists a character $\chi \in \irr {G/N \mid N[\delta_i,G]/N}$.  This implies that $\chi_{N[\delta_i/G]}$ has a nonprincipal irreducible constituent $\nu$.  Since $N[\delta_i,G]/(N \cap [\delta_i,G]) = N/(N \cap [\delta_i,G]) \times [\delta_i,G]/(N \cap [\delta_i,G])$, we see that $\nu_{[\delta_i,G]}$ is irreducible and nonprincipal.  This implies that $\chi \in \irr {G \mid [\delta_i,G]}$.  It follows that $Z_N \le Z(\chi) \le \delta_{i+1}$.  We conclude that $\prod_{N \in \mathcal{M}_i} Z_N \le \delta_{i+1}$.  This gives the desired equality in (1).
	
To prove (2), let $N$ be a normal subgroup of $G$.  If $[\delta_i,G] \le N$, then we are done.  Otherwise, $[\delta_i,G] \not\le N$ and so $N \le Z_N \le \prod_{N \in \mathcal{M}_i} Z_N = \delta_{i+1}$.  This proves (2).  

We now work to prove (3).  Suppose $[\delta_i,G] \le N$ and $[\delta_{i-1},G] \not\le N$.  Since $[\delta_i,G] \le N$, we see that $\delta_i \le N$.  On the other hand, since $[\delta_{i-1},G] \not\le N$, we have by (2) that $N \le \delta_i$.  This proves $N = \delta_i$ and (3) is proved.  Notice that (4) is just (3) with $N = [\delta_i,G]$.  
	
To prove (5), suppose $N$ is a normal subgroup and $[\delta_i,G] \le N$.  If $G' \le N$, then $Z_N = G$.  Otherwise, we can find an integer $j \le i$ so that $[\delta_j,G] \le N$ and $[\delta_{j-1},G] \not\le N$.  We then apply (3) to see that $Z_N = \delta_j$.  It follows that $G/[\delta_i,G]$ is a nested group by Lemma 3.2 of \cite{ML19gvz}.  
\end{proof}

Again, since $G$ is finite, there exists a positive integer $n$ so that $\delta_n = \delta_{n+1}$.  We set $\delta_\infty = \delta_n$.  This next result is Theorem \ref{intro delta}.

\begin{thm}\label{delta series}
Let $G$ be a nonabelian group. Then $\delta_\infty = 1$ if and only if $G$ is nested. Moreover, if $G$ is nested with chain of centers $G = X_0 > X_1 > \dotsb > X_n \ge 1$, then $\delta_i = X_{i-1}$ for every integer $1 \le i \le n+1$.
\end{thm}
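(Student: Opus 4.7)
The plan is to establish the moreover claim by induction on $i$, from which the forward direction follows at once, and then to derive the converse directly from Lemma~\ref{nested basics}~(5).

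For the moreover statement, the base case $\delta_1 = G = X_0$ is by definition. Assuming $\delta_i = X_{i-1}$ for some $1 \le i \le n$, I want to compute $\delta_{i+1} = \prod_{\chi \in \irr {G \mid [X_{i-1},G]}} Z(\chi)$. The key is to identify exactly which characters contribute. Since $G$ is nested, every $\chi \in \irr G$ has $Z(\chi) = X_k$ for some $0 \le k \le n$; and by Lemma~\ref{lewisgvz}~(3) combined with the strictly descending chain $[X_0,G] > [X_1,G] > \dotsb > [X_n,G] = 1$ from Lemma~\ref{lewisgvz}~(2), the condition $[X_{i-1},G] \nleq \ker(\chi)$ is equivalent to $k \ge i$. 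Indeed, if $k \ge i$ then $[X_{k-1},G] \le [X_{i-1},G]$ and $[X_{k-1},G] \nleq \ker(\chi)$ force the condition, while if $k < i$ then $[X_{i-1},G] \le [X_k,G] \le \ker(\chi)$ violates it (the case $k = 0$ is covered since linear characters satisfy $G' = [X_0,G] \le \ker(\chi)$). Hence $\irr {G \mid [X_{i-1},G]}$ is exactly the set of $\chi$ with $Z(\chi) \in \{X_i,\dotsc,X_n\}$, so the product of these centers equals $X_i$, realized already by any character with $Z(\chi) = X_i$. This gives $\delta_{i+1} = X_i$ and completes the induction through $i+1 = n+1$.

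The forward direction then follows immediately: for $G$ nonabelian and nested, $\delta_{n+1} = X_n = Z(G)$ by Lemma~\ref{lewisgvz}~(1), so $[\delta_{n+1},G] = 1$, the set $\irr {G \mid 1}$ is empty, and the empty product convention forces $\delta_{n+2} = 1$, whence $\delta_\infty = 1$. For the converse, if $\delta_\infty = 1$ then $\delta_m = 1$ for some integer $m$, so $[\delta_m,G] = 1$, and Lemma~\ref{nested basics}~(5) yields that $G = G/[\delta_m,G]$ is nested.

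The main obstacle is the case analysis in the inductive step: one must combine the strict descent from Lemma~\ref{lewisgvz}~(2) with the kernel/center characterization from Lemma~\ref{lewisgvz}~(3), while being careful to include the linear characters ($k = 0$) in the argument. Everything else is straightforward bookkeeping using the established structural results.
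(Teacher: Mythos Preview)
Your proof is correct and follows essentially the same approach as the paper's: induction on $i$ to show $\delta_i = X_{i-1}$, then the empty-product convention gives $\delta_{n+2} = 1$, and the converse is Lemma~\ref{nested basics}~(5). The only difference is cosmetic: the paper argues the inductive step via Lemma~\ref{center kernel} (from $[X_{i-1},G] \nleq \ker(\chi)$ directly to $X_{i-1} \nleq Z(\chi)$, hence $Z(\chi) \in \{X_i,\dots,X_n\}$), whereas you unpack the same implication through Lemma~\ref{lewisgvz}~(3) with an explicit case split on $k$.
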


\begin{proof}
First, assume that $G$ is nested with chain of centers $G = X_0 > X_1 > \dotsb > X_n \ge 1$. We now show that  $\delta_i = X_{i-1}$ by induction on $i$.  The case $i = 1$ follows from the definitions since $X_0 = G = \delta_1$. Now assume that  $\delta_i = X_{i-1}$ for some integer $i\ge 1$. Then $\displaystyle \delta_{i+1} = \!\!\!\!\!\!\! \prod_{\chi \in \irr {G \mid [\delta_{i},G]}} \!\!\!\!\!\!\!\!\! Z(\chi)$.  Observe that if $\chi \in \irr {G \mid [\delta_i,G]}$, then $[\delta_i,G] = [X_{i-1},G] \not\le \ker (\chi)$, and thus, $X_{i-1} \not\le Z(\chi)$.  Since $G$ is nested, this implies that the centers of the characters $\chi$ that we are taking the product over will run through $X_i, \dots, X_n$, and so, $\delta_{i+1} =  X_{i} X_{i-1} \dotsb X_n = X_{i}$.  This proves the claim.  In particular, $\delta_{n+1} = X_n$ and $[X_n,G] = 1$ by Lemma \ref{lewisgvz}.  It follows that $\displaystyle \delta_{n+2} = \!\!\!\!\!\!\! \prod_{\chi \in \irr {G \mid 1}} \!\!\!\!\!\!\!\!\! Z(\chi)$.  However, since the set $\irr {G \mid 1}$ is empty, we obtain $X_{n+2} = 1$, and it is easy to see this implies that $\delta_{\infty} = 1$.
	
Now assume that $\delta_\infty = 1$.  By Lemma \ref{nested basics}, we see that $G$ is nested.
\end{proof}


We also define the subgroups $\epsilon_i$ by $\epsilon_1 = G$ and $\epsilon_{i+1} = V (G \mid [\epsilon_i,G])$.  Observe that $\delta_i = \epsilon_i$ when $G$ is a GVZ-group since $V (\chi) = Z (\chi)$ for all characters $\chi \in \irr G$. We have already seen that $\delta_{i+1} \le \delta_i$, so the $\epsilon_i$ is a chain when $G$ is a GVZ-group.  We now show that this is always the case.

\begin{lem} \label{containment}
Let $G$ be a group.  For each $i \ge 1$, $\delta_{i+1} \le \epsilon_{i+1} \le \epsilon_i$.
\end{lem}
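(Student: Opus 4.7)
The approach is an induction on $i$ that leverages three ingredients: the product description $V(G\mid N) = \prod_{\chi\in \irr{G\mid N}} V(\chi)$ from Lemma \ref{Vprops}(2), the monotonicity $N\le H \Rightarrow V(G\mid N) \le V(G\mid H)$ from Lemma \ref{Vprops}(4), and the elementary pointwise inclusion $Z(\chi)\le V(\chi)$ valid for every $\chi\in \irr G$ (which holds because $g\in Z(\chi)$ forces $|\chi(g)| = \chi(1)\ne 0$, hence $g\in V(\chi)$).

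First I would handle the descending containment $\epsilon_{i+1}\le \epsilon_i$ by induction on $i$. The base case $i=1$ is immediate: $\epsilon_2 = V(G\mid G')$ is a subgroup of $G = \epsilon_1$. For the inductive step, if $\epsilon_i\le \epsilon_{i-1}$, then $[\epsilon_i,G] \le [\epsilon_{i-1},G]$, and Lemma \ref{Vprops}(4) delivers $\epsilon_{i+1} = V(G\mid [\epsilon_i,G]) \le V(G\mid [\epsilon_{i-1},G]) = \epsilon_i$.

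For the remaining containment $\delta_{i+1}\le \epsilon_{i+1}$, I would prove by induction the slightly stronger statement that $\delta_i\le \epsilon_i$ for every $i\ge 1$. The base case $\delta_1 = G = \epsilon_1$ is trivial. Assuming $\delta_i\le \epsilon_i$, we have $[\delta_i,G]\le[\epsilon_i,G]$, and Lemma \ref{irr sets} yields $\irr{G\mid [\delta_i,G]}\subseteq \irr{G\mid[\epsilon_i,G]}$. Combined with $Z(\chi)\le V(\chi)$, this gives
\[ \delta_{i+1} \;=\; \prod_{\chi\in\irr{G\mid[\delta_i,G]}} Z(\chi) \;\le\; \prod_{\chi\in\irr{G\mid[\epsilon_i,G]}} V(\chi) \;=\; V(G\mid [\epsilon_i,G]) \;=\; \epsilon_{i+1}, \]
where the penultimate equality is Lemma \ref{Vprops}(2). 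This proves $\delta_{i+1}\le \epsilon_{i+1}$ and simultaneously supplies the induction hypothesis for the next step.

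There is no real obstacle here; the lemma is a bookkeeping exercise with the machinery already in place. The only points that require a moment's care are the translation from containment of normal subgroups to containment of the corresponding sets $\irr{G\mid \cdot}$ (handled by Lemma \ref{irr sets}) and the pointwise inclusion $Z(\chi)\le V(\chi)$, both of which are essentially definitional.
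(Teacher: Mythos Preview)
Your proof is correct and follows essentially the same approach as the paper's own proof: both parts proceed by induction on $i$, use the containment of commutator subgroups to obtain the containment of the corresponding sets $\irr{G\mid\cdot}$, and then invoke $Z(\chi)\le V(\chi)$ together with the product description of $V(G\mid N)$. The only cosmetic difference is that the paper argues the inclusion $V(\chi)\le V(G\mid[\epsilon_i,G])$ directly rather than via Lemma~\ref{Vprops}(2), and phrases the first induction through vanishing rather than citing Lemma~\ref{Vprops}(4), but these are the same arguments.
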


\begin{proof}
We first verify $\epsilon_{i+1} \le \epsilon_i$ by induction on $i$. If $i = 1$, it is clear. So let $i \ge 2$, and assume that $\epsilon_{i+1} \le \epsilon_{i}$. Let $\chi \in \irr {G \mid [\epsilon_{i+1},G]}$; then $\chi \in \irr {G \mid [\epsilon_{i},G]}$ by Lemma~\ref{Vprops} and so $\chi$ vanishes off $V (G \mid [\epsilon_{i},G]) = \epsilon_{i+1}$. This forces the relation $V (G \mid [\epsilon_{i+1},G]) \le \epsilon_{i+1}$, as required.

We now verify $\delta_{i+1} \le \epsilon_{i+1}$ by also induction on $i$.  Observe that $\delta_1 = \epsilon_1 = G$.  We may assume that $\delta_i \le \epsilon_i$ for some $i \ge 1$.  It follows that $[\delta_i,G] \le [\epsilon_i,G]$.  Hence, if $\chi \in \irr {G \mid [\delta_i,G]}$, then $\chi \in \irr {G \mid [\epsilon_i,G]}$.  This implies that $Z(\chi) \le V (\chi) \le V (G \mid [\epsilon_i,G]) = \epsilon_{i+1}$.  It follows that $\delta_{i+1} = \prod_{\chi \in [\delta_i,G]} Z(\chi) \le \epsilon_{i+1}$.
\end{proof}

Observe that $\epsilon_i/\epsilon_{i+1} \le {Z} (G/\epsilon_{i+1})$, for each $i\ge 1$, since $[\epsilon_i,G] \le {V} (G \mid [\epsilon_i,G]) = \epsilon_{i+1}$. Also observe that if $\epsilon_i > {Z} (G)$ for some $i$, then $[\epsilon_i, G] > 1$. Therefore the set $\irr {G\mid [\epsilon_i,G]}$ is nonempty, and it follows that $\epsilon_{i+1} \ge {Z} (G)$.  On the other hand, we see that if $\epsilon_i = Z (G)$, then $[\epsilon_i,G] = 1$ and so $\epsilon_{i+1} = V (G \mid 1) = 1$.  Note that if $[\epsilon_{i-1},G] = [\epsilon_i,G]$, then $\epsilon_{i+1} = V (G \mid [\epsilon_i,G]) = V (G \mid [\epsilon_{i-1},G]) = \epsilon_i$.  We write $\epsilon_\infty$ for the terminal term of this series.

\begin{lem} \label{Z(chi) = V(chi)}
Let $G$ be a group.  If $\chi \in \irr {G \mid U(G)}$, then $Z (\chi) = V (\chi)$.  In particular, if $\chi \in \irr {G \mid U_\infty}$, then $Z (\chi) = V (\chi)$.  
\end{lem}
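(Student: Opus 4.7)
The plan is to deduce the first assertion almost directly from the defining property of $U(G)$, and then to bootstrap the second assertion by passing to a quotient by an appropriate $U_j$.

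For the first statement, I would begin with the chain of inclusions $Z(G)\le Z(\chi)\le V(\chi)$ that is valid for every $\chi\in\irr G$. The left inclusion comes from the general fact that $Z(G)\ker(\chi)/\ker(\chi)\le Z(G/\ker(\chi))=Z(\chi)/\ker(\chi)$, and the right inclusion holds because $\chi$ has absolute value $\chi(1)$ on $Z(\chi)$ and in particular does not vanish there, placing $Z(\chi)$ among the generators of $V(\chi)$. Now if $\chi\in\irr{G\mid U(G)}$, Lemma~\ref{uproperties}(1) tells us that $\chi$ vanishes on $G\setminus Z(G)$; that is, $V(\chi)\le Z(G)$. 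Combining with the chain above, the three subgroups $Z(G)$, $Z(\chi)$ and $V(\chi)$ must all coincide, giving $Z(\chi)=V(\chi)$ as desired.

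For the ``in particular'' statement, let $\chi\in\irr{G\mid U_\infty}$ and choose the smallest integer $i\ge 1$ for which $U_i\not\le\ker(\chi)$; this exists because $U_0=1\le\ker(\chi)$ while $U_\infty\not\le\ker(\chi)$. By minimality, $U_{i-1}\le\ker(\chi)$, so $\chi$ can be viewed as an irreducible character $\bar\chi$ of $\bar G=G/U_{i-1}$. By construction $U(\bar G)=U_i/U_{i-1}$, and $\bar\chi$ does not have $U(\bar G)$ in its kernel, so $\bar\chi\in\irr{\bar G\mid U(\bar G)}$. Applying the first part inside $\bar G$ yields $Z(\bar\chi)=V(\bar\chi)$.

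Finally, I would translate this equality back to $G$ using the fact that for any character $\psi$ of $G$ with $N\le\ker(\psi)$ one has $Z(\psi)/N=Z(\bar\psi)$ and $V(\psi)/N=V(\bar\psi)$ in $G/N$, where $\bar\psi$ is the deflation to $G/N$. Specializing to $N=U_{i-1}$ and $\psi=\chi$ gives $Z(\chi)/U_{i-1}=V(\chi)/U_{i-1}$, and hence $Z(\chi)=V(\chi)$. The only potential obstacle is the quotient-compatibility in this last step, but it follows at once from the definitions of $Z(\chi)$ and $V(\chi)$ in terms of character values, since $\bar\chi(gU_{i-1})=\chi(g)$ whenever $U_{i-1}\le\ker(\chi)$. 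No heavier machinery is needed.
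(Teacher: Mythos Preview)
Your proof is correct and follows essentially the same approach as the paper. For the first part, the paper invokes Lemma~\ref{centerproperty} to conclude $Z(\chi)=Z(G)$ and then Lemma~\ref{uproperties} to get $V(\chi)\le Z(G)$, whereas you use the sandwich $Z(G)\le Z(\chi)\le V(\chi)\le Z(G)$ directly; these are minor variations of the same idea. For the second part, both you and the paper choose the minimal $i$ with $U_i\not\le\ker(\chi)$, pass to $G/U_{i-1}$, and apply the first part there.
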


\begin{proof}
Suppose $\chi \in \irr {G \mid U (G)}$.  Notice that $U (G) \not\le \ker (\chi)$ implies that $\ker (\chi) \le Z (G)$ and $Z(G) = Z(\chi)$ by Lemma \ref{centerproperty}.  By Lemma \ref{uproperties}, we see that $\chi$ vanishes off of $Z(G) = Z(\chi)$.  This implies $V (\chi) = Z (\chi)$.  Now, suppose that $\chi \in \irr {G \mid U_\infty}$.  Thus, we can find an integer $i$ so that $\chi \in \irr {G \mid U_i}$ and $\chi \not\in \irr {G \mid U_{i-1}}$.  This implies that $\chi \in \irr {G/U_{i-1}}$.  Hence, we may assume that $U_{i-1} = 1$.  This implies that $i = 1$, and we are done by the first conclusion.  
\end{proof}

\begin{lem}\label{ind cond}
Let $G$ be a group.  If $[\epsilon_{i+1},G] < [\epsilon_i,G]$, then $\delta_{i+1} = \epsilon_{i+1}$ and $[\epsilon_i,G]/[\epsilon_{i+1},G] \le U (G/[\epsilon_{i+1},G])$.
\end{lem}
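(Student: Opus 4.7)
The plan is to prove the two conclusions separately: the second by a direct Camina-triple computation and the first by induction on $i$, bootstrapping from the observation that the lemma's strict inequality hypothesis propagates backwards along the $\epsilon$-series.

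For the containment $[\epsilon_i,G]/[\epsilon_{i+1},G] \le U(G/[\epsilon_{i+1},G])$, I would write $\overline{G} = G/[\epsilon_{i+1},G]$ and invoke Lemma~\ref{uiff}, which reduces the claim to verifying $V(\overline{G} \mid [\epsilon_i,G]/[\epsilon_{i+1},G]) \le Z(\overline{G})$. Each character $\overline{\chi} \in \irr{\overline{G} \mid [\epsilon_i,G]/[\epsilon_{i+1},G]}$ is inflated from some $\chi \in \irr{G \mid [\epsilon_i,G]}$ with $[\epsilon_{i+1},G] \le \ker\chi$. Since $\epsilon_{i+1} = V(G \mid [\epsilon_i,G])$, Lemma~\ref{equiv} tells us $(G,\epsilon_{i+1},[\epsilon_i,G])$ is a Camina triple, so $\chi$ vanishes off $\epsilon_{i+1}$; hence $\overline{\chi}$ vanishes off $\epsilon_{i+1}/[\epsilon_{i+1},G]$, which is contained in $Z(\overline{G})$ since $\epsilon_{i+1}/[\epsilon_{i+1},G]$ is central by construction. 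This gives the required inclusion.

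For the equality $\delta_{i+1} = \epsilon_{i+1}$, Lemma~\ref{containment} already supplies $\delta_{i+1} \le \epsilon_{i+1}$, so only the reverse inclusion requires work, and I would establish it by induction on $i$. The crucial preliminary observation is that the hypothesis cascades: if $[\epsilon_j,G] = [\epsilon_{j-1},G]$ for some $j$, then $\epsilon_{j+1} = V(G \mid [\epsilon_j,G]) = V(G \mid [\epsilon_{j-1},G]) = \epsilon_j$, whence $[\epsilon_{j+1},G] = [\epsilon_j,G]$. Applied with $j = i$, the given $[\epsilon_{i+1},G] < [\epsilon_i,G]$ forces $[\epsilon_i,G] < [\epsilon_{i-1},G]$ when $i \ge 2$, so the inductive hypothesis yields $\delta_i = \epsilon_i$. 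Hence $[\delta_i,G] = [\epsilon_i,G] \not\le [\epsilon_{i+1},G]$, meaning $[\epsilon_{i+1},G]$ lies in the family $\mathcal{M}_i$ of Lemma~\ref{nested basics}~(1). That lemma identifies $\delta_{i+1}$ with $\prod_{N \in \mathcal{M}_i} Z_N$, while trivially $\epsilon_{i+1} \le Z_{[\epsilon_{i+1},G]}$ because $[\epsilon_{i+1},G] \le [\epsilon_{i+1},G]$. The base case $i = 1$ is direct: $\delta_1 = G = \epsilon_1$, so $[\delta_1,G] = G' = [\epsilon_1,G] \not\le [\epsilon_2,G]$, and the same appeal to Lemma~\ref{nested basics}~(1) finishes the argument.

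The principal obstacle is the cascading observation itself; without it there is no way to rule out $[\delta_i,G] \le [\epsilon_{i+1},G]$, which is the precise technical condition allowing one to read $\epsilon_{i+1}$ off the product defining $\delta_{i+1}$. Once the cascade is recognized, both conclusions reduce to short applications of the structural results already established in the paper.
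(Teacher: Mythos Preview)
Your argument is correct. The inductive proof of $\delta_{i+1}=\epsilon_{i+1}$ is essentially the paper's: both rely on the cascade observation that $[\epsilon_{j},G]=[\epsilon_{j-1},G]$ forces the $\epsilon$-series to stabilize, contradicting $[\epsilon_{i+1},G]<[\epsilon_i,G]$; both then combine $\epsilon_{i+1}\le Z_{[\epsilon_{i+1},G]}$ with Lemma~\ref{nested basics}. The only cosmetic difference is that the paper invokes part~(3) of that lemma to obtain the exact equality $Z_{[\epsilon_{j+1},G]}=\delta_{j+1}$, while you use part~(1) (or~(2)) to obtain the inclusion $Z_{[\epsilon_{i+1},G]}\le\delta_{i+1}$, which is all that is needed.

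Where you genuinely diverge is in the second conclusion. The paper first establishes the stronger fact $\epsilon_{i+1}=Z_{[\epsilon_{i+1},G]}$ (a by-product of the induction via Lemma~\ref{nested basics}~(3)), so that $Z(\overline{G})=\epsilon_{i+1}/[\epsilon_{i+1},G]$ exactly, and then appeals to Lemma~\ref{uquotients} to transport $U(G\mid\epsilon_{i+1})$ down to $U(\overline{G})$. Your route is more elementary: you only need the trivial containment $\epsilon_{i+1}/[\epsilon_{i+1},G]\le Z(\overline{G})$, and you verify the defining vanishing condition for $U(\overline{G})$ directly from $\epsilon_{i+1}=V(G\mid[\epsilon_i,G])$ and Lemma~\ref{uiff}. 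This avoids Lemma~\ref{uquotients} altogether and, notably, makes the second conclusion independent of the first. The paper's route, in exchange, records the sharper identification of $Z(\overline{G})$ with $\epsilon_{i+1}/[\epsilon_{i+1},G]$, which it reuses in Lemma~\ref{nested gvz quo}.
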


\begin{proof}
We work by induction on $i$ to prove $\delta_{i+1} = \epsilon_{i+1}$.  Notice that $\delta_1 = \epsilon_1 = G$.  Suppose $1 \le j < i$ and suppose that $\delta_j = \epsilon_j$.  Notice that $[\epsilon_{j+1},G] = [\epsilon_j,G]$ implies that $\epsilon_{j+2} = V (G \mid [\epsilon_{j+1},G]) = V (G \mid [\epsilon_j,G]) = \epsilon_{j+1}$.  It is not difficult to see that this would imply $\epsilon_i = \epsilon_{i+1}$ and we would have a contradiction.  Thus, $[\epsilon_{j+1},G] < [\epsilon_j,G]$.  Since $\delta_j = \epsilon_j$, we can apply Lemma \ref{nested basics} (3) to see that $Z_{[\epsilon_{j+1}, G]} = \delta_{j+1}$.  Notice that $\epsilon_{j+1} \le Z_{[\epsilon_{j+1},G]}$, so this implies that $\epsilon_{j+1} \le \delta_{j+1}$.  On the other hand, Lemma \ref{containment} implies that $\delta_{j+1} \le \epsilon_{j+1}$.  This gives the equality $\delta_{j+1} = \epsilon_{j+1}$, and we get the conclusion by taking $j = i$.

We have that $\epsilon_{i+1} = \delta_{i+1} = Z_{[\delta_{i+1},G]} = Z_{[\epsilon_{i+1},G]}$.  We also have $\delta_{i+1} = \epsilon_{i+1} = V (G \mid [\epsilon_i,G])$ by definition.  This implies that $[\epsilon_i,G] \le U (G \mid \delta_{i+1})$.  By definition $U/[\epsilon_{i+1},G] = U (G/[\epsilon_{i+1},G]) = U (G/[\epsilon_{i+1}, G] \mid Z (G/[\epsilon_{i+1}, G])) = U (G/[\epsilon_{i+1},G] \mid \delta_{i+1}/[\epsilon_{i+1},G]) = U(G \mid \delta_{i+1})/[\epsilon_{i+1},G]$ where the last equality follows from Lemma \ref{uquotients}.  Hence, we have $U = U(G \mid \delta_{i+1})$ and $[\epsilon_{i},G] \le U$ as desired.  
\end{proof}

We now prove the parallel result for nested GVZ-groups and the $\epsilon$-series that we proved for nested groups and the $\delta$-series. 

\begin{lem} \label{nested gvz quo}
Let $G$ be a group.  If $[\epsilon_{i+1},G] < [\epsilon_i,G]$, then $G/[\epsilon_{i+1},G]$ is a nested GVZ-group.
\end{lem}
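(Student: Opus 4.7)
The plan is to combine Lemma \ref{ind cond}, Lemma \ref{nested basics}(5), and Theorem \ref{nested gvz U thm} with an induction on $i$. First I would observe that the hypothesis $[\epsilon_{i+1},G] < [\epsilon_i,G]$ forces strict descent at every earlier level: if $[\epsilon_{j+1},G] = [\epsilon_j,G]$ for some $j < i$, then the recurrence $\epsilon_{k+1} = V(G \mid [\epsilon_k,G])$ would give $\epsilon_{k+1} = \epsilon_j$ for every $k \ge j$, contradicting strict descent at $i$. Hence $[\epsilon_{j+1},G] < [\epsilon_j,G]$ for every $1 \le j \le i$, so Lemma \ref{ind cond} applies at every such level.

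Write $\bar G = G/[\epsilon_{i+1},G]$. Lemma \ref{ind cond} gives $\delta_{i+1} = \epsilon_{i+1}$ together with $[\epsilon_i,G]/[\epsilon_{i+1},G] \le U(\bar G)$. The first equality and Lemma \ref{nested basics}(5) already show that $\bar G$ is nested, so by Theorem \ref{nested gvz U thm} it will suffice to prove $U_\infty(\bar G) = \bar G$. A direct unwinding of the recursive definition of the $U$-series yields
\[
U_{j+1}(\bar G)/U(\bar G) = U_{j}\bigl(\bar G/U(\bar G)\bigr) \quad \text{for every } j \ge 0,
\]
so $U_\infty(\bar G) = \bar G$ is equivalent to $U_\infty\bigl(\bar G/U(\bar G)\bigr) = \bar G/U(\bar G)$, i.e., to $\bar G/U(\bar G)$ being a nested GVZ-group (again by Theorem \ref{nested gvz U thm}).

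Using the containment $[\epsilon_i,G]/[\epsilon_{i+1},G] \le U(\bar G)$, we see that $\bar G/U(\bar G)$ is a quotient of $\bar G/([\epsilon_i,G]/[\epsilon_{i+1},G]) = G/[\epsilon_i,G]$, and since quotients of nested GVZ-groups are clearly nested GVZ-groups, it now suffices to show that $G/[\epsilon_i,G]$ is a nested GVZ-group. I would do this by induction on $i$: when $i = 1$, $G/[\epsilon_1,G] = G/G'$ is abelian and so trivially a nested GVZ-group; when $i \ge 2$, the first paragraph gives $[\epsilon_i,G] < [\epsilon_{i-1},G]$, and the inductive hypothesis applied at index $i-1$ gives that $G/[\epsilon_i,G]$ is a nested GVZ-group. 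Combining everything, $\bar G$ is a nested GVZ-group.

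The main obstacle is really just the bookkeeping identity $U_{j+1}(\bar G)/U(\bar G) = U_j(\bar G/U(\bar G))$, which lets the inductive step be stated cleanly; this is a routine induction on $j$ from the recursion defining $U_j$. The rest of the proof is an assembly of Lemma \ref{ind cond}, Lemma \ref{nested basics}(5), and Theorem \ref{nested gvz U thm}, together with the standing observation that the $\epsilon$-series cannot partially stabilize.
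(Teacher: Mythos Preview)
Your proof is correct, but it takes a genuinely different route from the paper's. Both arguments begin the same way: Lemma \ref{ind cond} gives $\delta_{i+1}=\epsilon_{i+1}$ and $[\epsilon_i,G]/[\epsilon_{i+1},G]\le U(\bar G)$, and Lemma \ref{nested basics}(5) then yields that $\bar G=G/[\epsilon_{i+1},G]$ is nested. The difference is in how the GVZ property is established. The paper works character by character: for each $\chi\in\irr{\bar G}$ it locates the minimal $j$ with $[\epsilon_{j+1},G]\le\ker(\chi)$, observes (via Lemma \ref{ind cond} at level $j$) that $\chi$ lies in $\irr{G/[\epsilon_{j+1},G]\mid U(G/[\epsilon_{j+1},G])}$, and then invokes Lemma \ref{Z(chi) = V(chi)} to get $Z(\chi)=V(\chi)$ directly. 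Your argument is instead structural: you bootstrap the $U$-series identity $U_{j+1}(\bar G)/U(\bar G)=U_j(\bar G/U(\bar G))$ to reduce $U_\infty(\bar G)=\bar G$ to the statement that $\bar G/U(\bar G)$ is a nested GVZ-group, and then use induction on $i$ together with the fact that $\bar G/U(\bar G)$ is a quotient of $G/[\epsilon_i,G]$. The paper's approach is shorter and avoids the inductive scaffolding and the bookkeeping identity; your approach has the virtue of never touching individual characters once Lemma \ref{ind cond} is in hand, deducing everything from the $U_\infty$ characterization of Theorem \ref{nested gvz U thm}.
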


\begin{proof}
We know that $[\epsilon_{i+1},G] = [\delta_{i+1},G]$ by Lemma \ref{ind cond}.  By Lemma \ref{nested basics} (5), this implies that $G/[\epsilon_{i+1},G]$ is nested.  Let $\chi \in \irr {G/[\epsilon_{i+1},G]}$.  Let $j$ be minimal so that $[\epsilon_{j+1},G] \le \ker (\chi)$.  If $j = 0$, then $G' \le \ker (\chi)$ and $G = Z (\chi) = V(\chi)$.  Suppose $j \ge 1$.  Notice that $[\epsilon_j,G]$ will not be in the kernel of $\chi$.  By Lemma \ref{ind cond}, we see that $[\epsilon_j,G]/[\epsilon_{j+1},G] \le U(G/[\epsilon_{j+1},G])$.  Hence, we may then apply Lemma \ref{Z(chi) = V(chi)} to see that $Z (\chi) = V (\chi)$.  This implies that $G/[\epsilon_i,G]$ will be a GVZ-group also as desired.
\end{proof}

We finally come to Theorem \ref{intro epsilon}.

\begin{thm}
Let $G$ be a nonabelian group.  Then $G$ is a nested GVZ-group if and only if $\epsilon_\infty = 1$. Moreover, in the event that $G$ is a nested GVZ-group with chain of centers $G = X_0 > X_1 > \dotsb > X_n > 1$, then $\epsilon_{i+1} = X_i$ for every $0 \le i \le n$.
\end{thm}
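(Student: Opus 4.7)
The plan is to deduce this theorem almost entirely from results already established in the section. The key observation, noted in the paragraph defining the $\epsilon$-series, is that $\delta_i = \epsilon_i$ for every positive integer $i$ whenever $G$ is a GVZ-group. This is a routine induction on $i$: the base case is $\delta_1 = G = \epsilon_1$, and if $\delta_i = \epsilon_i$, then $[\delta_i,G] = [\epsilon_i,G]$, so the defining products over $\irr{G \mid [\delta_i,G]} = \irr{G \mid [\epsilon_i,G]}$ coincide because $Z(\chi) = V(\chi)$ for every $\chi \in \irr G$ in a GVZ-group.

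Given this identification, the forward direction and the \emph{moreover} statement follow immediately from Theorem~\ref{delta series}: if $G$ is a nested GVZ-group, then $\epsilon_\infty = \delta_\infty = 1$, and if additionally $G$ has chain of centers $G = X_0 > X_1 > \dotsb > X_n > 1$, then $\epsilon_{i+1} = \delta_{i+1} = X_i$ for $0 \le i \le n$.

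For the converse, suppose $\epsilon_\infty = 1$. Since the series $\{\epsilon_i\}$ is eventually constant at its terminal value $1$, there exists a smallest positive integer $j$ with $[\epsilon_j,G] = 1$. I claim $j \ge 2$: indeed $[\epsilon_1,G] = G'$, which is nontrivial because $G$ is nonabelian. By the minimality of $j$ we have $[\epsilon_{j-1},G] > 1$, and therefore $[\epsilon_j,G] < [\epsilon_{j-1},G]$. Applying Lemma~\ref{nested gvz quo} with index $i = j-1$ shows that $G = G/[\epsilon_j,G]$ is a nested GVZ-group, completing the converse.

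The argument is essentially mechanical once the preceding lemmas are in place; the only point requiring any thought is producing the strict descent $[\epsilon_j,G] < [\epsilon_{j-1},G]$ needed to invoke Lemma~\ref{nested gvz quo}. This is the mildest possible obstacle, handled by choosing $j$ minimal with $[\epsilon_j,G] = 1$ and using that $G$ is nonabelian to exclude $j = 1$. No separate analysis is needed to produce the chain of centers in the converse, since that information is already encoded in the $\delta$-series via the identification with the $\epsilon$-series.
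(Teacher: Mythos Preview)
Your proof is correct and follows essentially the same route as the paper: identify $\epsilon_i = \delta_i$ when $G$ is a GVZ-group (using $V(\chi)=Z(\chi)$), invoke Theorem~\ref{delta series} for the forward direction and the chain-of-centers identification, and use Lemma~\ref{nested gvz quo} for the converse. Your choice of the minimal $j$ with $[\epsilon_j,G]=1$ makes explicit the strict inequality $[\epsilon_j,G]<[\epsilon_{j-1},G]$ needed to apply Lemma~\ref{nested gvz quo}, a detail the paper leaves to the reader.
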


\begin{proof}
If $G$ is a GVZ-group, then so is any epimorphic image of $G$.  We have $V (\chi) = Z (\chi)$ for every character $\chi \in \irr G$. Furthermore, we have from Lemma~\ref{Vprops} that $V (G \mid [\epsilon_i,G]) = \prod_{\chi} V(\chi)$, where the product is over all characters $\chi \in \irr {G \mid [\epsilon_i,G]}$.  Hence, $\epsilon_i = \delta_i$ for all $i$ when $G$ is a GVZ-group and so the fact that $G$ being a nested GVZ-group implies that $\epsilon_\infty = 1$ follows from Theorem~\ref{delta series}. If $G$ is a nested GVZ-group with chain of centers $G = X_0 > X_1 >\dotsb > X_n > 1$, then Theorem~\ref{delta series} also yields $\epsilon_{i+1} = \delta_{i+1} = X_i$ for every $0 \le i\le n$.  Conversely, if $\epsilon_\infty = 1$, then by Lemma \ref{nested gvz quo}, we see that $G$ is a nested GVZ-group. 
\end{proof}

\section{Examples}

In \cite{ML19gvz}, the second author presents examples of nested GVZ-groups of arbitrarily large nilpotence class.  Obviously, these groups provide examples where $U(G) > 1$ and $Z_{U (G)} > Z(G)$.  In \cite{ML19gvz}, the second author also showed that groups of maximal class are nested, and if they are of class greater than $2$, they are not GVZ-groups.  In particular, when $G$ is a group of order $p^4$ and nilpotence class $3$, then one can see that $G$ is of maximal class, so $K (G) = Z(G) > 1$, but $|G:Z(G)| = p^3$, so no irreducible character is fully-ramified over $Z(G)$, and this implies that $U (G) = 1$.  Similarly, one can show when $G = C_p \wr C_p$ for any odd prime $p$ that $G$ satisfies $K(G) > 1$ and $U(G) = 1$.  


We continue by constructing groups $G$ where $U (G) > 1$ and $U (G) < Z(G)$.  

Let $H$ and $K$ be $p$-groups for some prime $p$ and let $l \ge 1$ be integer.  Suppose that $p^l \le |U (H)|, |U (K)|$ and $p^l < |Z(H)|, |H'|, |Z (K)|, |K'|$.  Fix elements $x_1, \dots, x_l \in U(H)$ and $y_1, \dots, y_l \in U (K)$ be chosen so that $X = \langle x_1, \dots, x_l \rangle$ and $Y = \langle y_1, \dots y_l \rangle$ both have order $p^l$.  (I.e., if we think of $ U(H)$ and $U (K)$ as vector spaces, then the $x_i$'s and the $y_i$'s each form linearly independent subsets.)  We then take $N \le Z (H \times K)$ by $N = \langle (x_1,y_1), \dots, (x_l,y_l) \rangle$.  Let $G = (H \times K)/N$ and observe that $Z (H \times K) = Z(H) \times Z (K)$, so $(Z(H) \times Z(K))/N \le Z(G)$. 

Let $U/N = U(G)$.  Take $M = (X \times Y)/N$.  Observe that $G/M \cong ((H \times K)/N)/((X \times Y)/N) = H/X \times K/Y$.  Since $|X| < |H'|$ and $|Y| < |K'|$, we see that $H/X$ and $K/Y$ are nonabelian, and we have that $U (G/M) = 1$.  Since $UM/M \le U (G/M)$, we have $U \le M$.

Let $\sigma \in \irr H$ and $\tau \in \irr K$.  We can find characters $\mu \in \irr X$ and $\nu \in \irr Y$ so that $\sigma_X = \sigma (1) \mu$ and $\tau_Y = \tau (1) \nu$.  Hence, $(\sigma \times \tau)_M = \sigma (1) \tau (1) (\mu \times \nu)$.  Observe that $N \le \ker (\sigma \times \tau)$ if and only if $\sigma (x_i) \tau (y_i) = \sigma (1) \tau (1)$ for all $i = 1, \dots, l$.  This implies that $\mu (x_i) \nu (y_i) = 1$ for all $i$.   Thus, $\irr G = \{ (\sigma, \tau) \in \irr {H \times K} \mid \mu (x_i) = \overline {\nu} (y_i) {\rm ~for~all~} i \in \{ 1, \dots, n\} \}$.  Notice that $\mu = 1_X$ if and only if $\nu = 1_Y$.  It follows that if $\chi \in \irr {G \mid M/N}$, then $\chi = \sigma \times \tau$ as above where $\mu \ne 1$ and $\nu \ne 1$.  Since $X \le U (H)$ and $Y \le U (K)$, this implies that $\sigma \in \irr {H \mid U(H)}$ and $\tau \in \irr {K \mid (K)}$.  We deduce that $\sigma$ is fully-ramified with respect to $H/Z(H)$ and $\tau$ is fully-ramified with respect to $K/Z(K)$.  A quick check of degrees reveals that $\chi$ will be fully-ramified with respect to $(H \times K)/(Z(H) \times Z(K))$.  Notice that one consequence of this is that $Z(G) \le (Z(H) \times Z(K))/N$ and so, $Z (G) = (Z(H) \times Z(K))/N$.  A second consequence is that every character in $\irr {G \mid M/N}$ is fully-ramified with respect to $G/Z(G)$.  We conclude that $U (G) = M/N$.  Observe that $|U (G)| = |M:N| = p^{2l}/p^l = p^l$.  Also, since $p^l < |Z(H)|, |Z(K)|$, we have that $U (G) < Z(G)$. 

Next, we show that these examples can be extended to find groups where $Z_{U(G)} = Z(G)$.  To do this, we need to add the assumption that $p^l < |U(H)|$ and $p^l < |U(K)|$.  Thus, we can find $u \in U(H) \setminus X$ and $v \in U(K) \setminus Y$.  Let $L = (N \times \langle (u,v)\rangle)/N$, and observe that $L$ is not contained $M$.  It is not difficult to see that $G/L \cong (H \times K)/ (N \times \langle (u,v)\rangle)$, and by the last paragraph we see that $U (G/L) = (X \langle u \rangle \times Y \langle v \rangle)/L = (M \times \langle u \rangle \times \langle v \rangle)/L$ whereas $UL/L = ML/L = (M \times \langle (u,v) \rangle)/L$.  This implies that $|U(G/L):M/L| = p^2$ and $|UL/L:M/L| = p$ and so, $UL/L < U(G/L)$.  By Lemma \ref{U equiv}, this implies that $Z_{U(G)} = Z(G)$.

Now, we take $H$ and $K$ to be semi-extraspecial groups, then $H \times K$ has nilpotence class $2$, so $G$ has nilpotence class $2$.  We will have that $U (G) = K (G)$, so we also obtain an example $K (G) < Z(G)$, and we will have examples where $Z_{K(G)} = Z(G)$.  Note that we can find semi-extraspecial groups so that $p^l < |U(H)| = |Z(H)|$ and $p^l < |U(K)| = |Z(K)|$ for every prime $p$ and every positive integer $l$, so we can find groups where $|U(G)| = p^l$ and $Z_{U(G)} = Z(G)$ for all primes $p$ and all positive integers $l$.

Finally, using Magma \cite{magma}, we have an examples of a $p$-groups $G$ satisfying $1 < U(G) < K(G)$.

\begin{verbatim}
p1 := PCGroup ([ 8, -3, 3, 3, 3, -3, 3, -3, 3, 2641, 52706, 3970, 
16419, 15851, 1267, 61564, 59412, 5205, 8237 ]);

p2 := PCGroup ([ 8, -5, 5, 5, 5, -5, 5, -5, 5, 1875000, 1790081, 
600009, 2175602, 1275130, 345018, 7504003, 68811, 1779, 84027, 
15025004, 2005012, 251020, 15228, 15150005, 1530013, 726021, 
109229 ]);
\end{verbatim}

Each of these groups has order $p^8$ and nilpotence class $3$.  In each group the center equals $K$ and has order $p^2$, but $U$ has order $p$.  The first group is a $3$-group and the second group is a $5$-group.  We also have found examples for $p = 7, 11, 13, 17, 19$, but we have not included these groups.

\end{document}